\theoremstyle{plain}
\newtheorem{Proposition}{Proposition}
\newtheorem{Lemma}{Lemma}
\newtheorem{Theorem}{Theorem}
\newtheorem{Corollary}{Corollary}
\theoremstyle{remark}             
\newtheorem{Remark}{Remark}
\newtheorem{example}{Example}
\DeclareMathOperator{\eig}{\varphi}
\DeclareMathOperator{\bbP}{\mathbb{P}}
\DeclareMathOperator{\bbN}{\mathbb{N}}
\DeclareMathOperator{\Cov}{Cov}
\newcommand{\proper}{\mathsf}
\newcommand{\pE}{\proper{E}}
\newcommand{\<}{\langle}
\renewcommand{\>}{\rangle}
\newcommand{\V}{{\bf \mathsf{V}}}
\DeclarePairedDelimiter\ceil{\lceil}{\rceil}
\begin{document}

\begin{frontmatter}
		\title{Gaussian Whittle--Mat\'ern fields on metric graphs}

\begin{aug}
\author[A]{\fnms{David} \snm{Bolin}\ead[label=e1,mark]{david.bolin@kaust.edu.sa}} 
\author[A]{\fnms{Alexandre B.} \snm{Simas}\ead[label=e2]{alexandre.simas@kaust.edu.sa}} 
\author[B]{\fnms{Jonas} \snm{Wallin}\ead[label=e3]{jonas.wallin@stat.lu.se}}
\runauthor{David Bolin, Alexandre Simas and Jonas Wallin }
	\address[A]{Statistics Program, Computer, Electrical and Mathematical Sciences and Engineering Division, King Abdullah 
	University of 
	Science and Technology (KAUST), 
	\printead{e1}, \printead{e2}}
\address[B]{Department of Statistics,
	Lund University,
	\printead{e3}} 
\end{aug}

\begin{abstract}
	We define a new class of Gaussian processes on compact metric graphs such as street or river networks. 
	The proposed models, the Whittle--Mat\'ern fields, are defined via a fractional stochastic  
	differential equation on the compact metric graph and are a natural extension of Gaussian fields with 
	Mat\'ern covariance functions on Euclidean domains to the non-Euclidean metric graph setting. 
	Existence of the processes, as well as some of their main properties, such as sample path regularity are derived. The model 
	class in particular contains differentiable processes. To the best of our knowledge, 
	this is the first construction of a differentiable Gaussian process on general compact metric 
	graphs. Further, we prove an intrinsic property of these processes: that they do not change upon addition or removal of vertices with degree two.
	Finally, we obtain Karhunen--Lo\`eve expansions of the processes, provide numerical experiments, and compare them to Gaussian processes with isotropic covariance functions.
\end{abstract}

\begin{keyword}[class=MSC]
	\kwd[Primary ]{60G60} 
	\kwd[; secondary ]{} 
	\kwd{60G17} 
	\kwd{60H15} 
\end{keyword}

\begin{keyword}
	\kwd{Gaussian processes, networks, quantum graphs, stochastic partial differential equations}
\end{keyword}

\end{frontmatter}

\section{Introduction}
In many areas of applications, statistical models need to be defined on networks such as connected rivers or street networks \cite{Hoef2006,okabe2012spatial,baddeley2017stationary,cronie2020}.  In this case, one wants to define the model using a metric on the network rather than the Euclidean distance between points. However, formulating Gaussian fields over linear networks, or more generally on metric graphs, is difficult. The reason being that it is hard to find flexible classes of functions that are positive definite under a non-Euclidean metric on the graph.

Often the shortest distance between two points is explored, i.e., the geodesic metric.  
A common alternative is the electrical resistance distance \cite{okabe2012spatial}, which was used recently by \cite{anderes2020isotropic} to create isotropic covariance functions on a subclass of metric graphs with Euclidean edges.
They in particular showed that for such graphs, one 
can define a valid Gaussian field by taking the covariance function to be of 
Mat\'ern type \cite{matern60}. That is, one can use the covariance of the form
\begin{equation}\label{eq:matern_cov}
r(s,t) = \frac{\Gamma(\nu)}{\tau^2\Gamma(\nu + \nicefrac{1}{2})\sqrt{4\pi}\kappa^{2\nu}}(\kappa d(s,t))^{\nu}K_\nu(\kappa d(s,t)),
\end{equation}
choosing $d(\cdot,\cdot)$ as the resistance metric. Here, 
$\tau,\kappa>0$ are parameters controlling the variance and practical correlation range, and 
$0<\nu\leq \nicefrac1{2}$ is a parameter controlling the sample path regularity. 
Further, $K_\nu(\cdot)$ is the modified Bessel function of the second kind and $\Gamma(\cdot)$ denotes the gamma function.  The restriction $\nu\leq \nicefrac1{2}$ means that we cannot use this approach to create differentiable Gaussian processes on metric graphs, even if they have Euclidean edges. 
Because of this and the many other difficulties with creating Gaussian fields via covariance functions on non-Euclidean spaces, we take a different approach in this work and focus on creating a Gaussian random field $u$ on a compact metric graph $\Gamma$ as a solution to a stochastic differential equation
\begin{equation}\label{eq:Matern_spde}
(\kappa^2 - \Delta)^{\nicefrac{\alpha}{2}} (\tau u) = \mathcal{W}, \qquad \mbox{on $\Gamma$},
\end{equation}
where $\alpha = \nu + \nicefrac1{2}$, $\Delta$ is  the Laplacian equipped with suitable 
``boundary conditions'' in the vertices, and $\mathcal{W}$ is Gaussian white noise. 
The advantage with this approach is that, if the solution exists, it automatically 
has a valid covariance function. The reason for considering this particular equation 
is that when \eqref{eq:Matern_spde} is considered on $\mathbb{R}^d$, it has 
Gaussian random fields with the covariance function \eqref{eq:matern_cov},
with $d(\cdot,\cdot)$ being the Euclidean distance,
as stationary solutions \cite{whittle63}. The method of generalizing the Mat\'ern fields to Riemannian manifolds by \emph{defining} Whittle--Mat\'ern fields as solutions to \eqref{eq:Matern_spde} specified on the manifold was proposed by \cite{lindgren11}, and has since then been extended to a number of scenarios \cite[see][for a recent review]{lindgren2022spde}, including non-stationary  \cite{bakka2019, Hildeman2020} and non-Gaussian \cite{bolin14,bw20} models. 

The difficulty in extending the SPDE approach to metric graphs is that it is not clear how to define the differential operator in this case, and it is also not clear what type of covariance functions one would obtain. We will in this work use quantum graph theory \cite{Berkolaiko2013} to define the operator and show that \eqref{eq:Matern_spde} then has a unique solution, for which we can derive sample path regularity properties.  
For $\alpha=1$ we obtain a field with a covariance function that is similar to the exponential covariance, i.e., the case $\nu=\nicefrac1{2}$ in \eqref{eq:matern_cov}, which was shown to be a valid covariance for metric graphs with Euclidean edges by \cite{anderes2020isotropic}. However, our construction has the advantage that it is well-defined for \emph{any} compact metric graph, which may contain loops and multiple edges between vertices, and not only for the subclass with Euclidean edges. 
Furthermore, the model is well-defined for any value of $\alpha>\nicefrac{1}{2}$, and we show that it yields a process with differentiable sample paths if $\alpha>\nicefrac{3}{2}$. In this case one cannot use the corresponding Mat\'ern covariance function to construct a valid Gaussian process, even for graphs with Euclidean edges \cite{anderes2020isotropic}. Thus, this construction provides, as far as we know, for the first time a covariance function for differentiable random fields on general compact metric graphs. See Figure~\ref{fig:cov_example} for an example. 

\begin{figure}[t]
\label{fig:cov_example}
\includegraphics[width=0.4\linewidth]{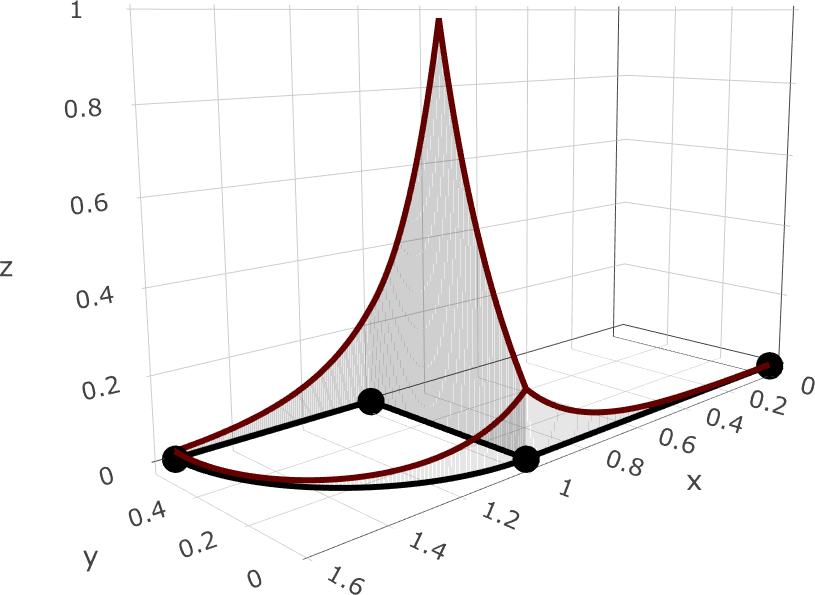}
\includegraphics[width=0.4\linewidth]{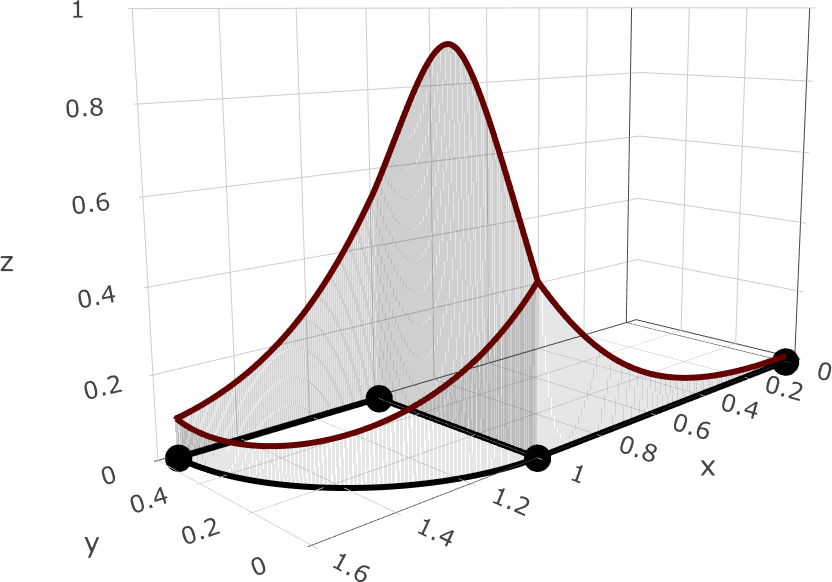}
\includegraphics[width=0.4\linewidth]{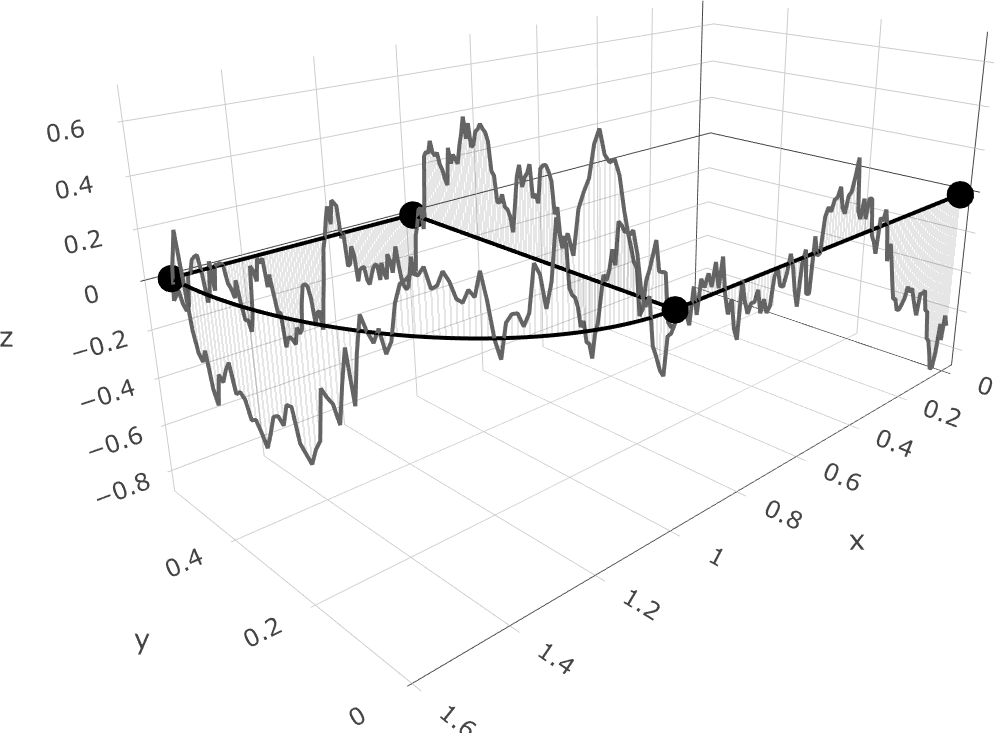}
\includegraphics[width=0.4\linewidth]{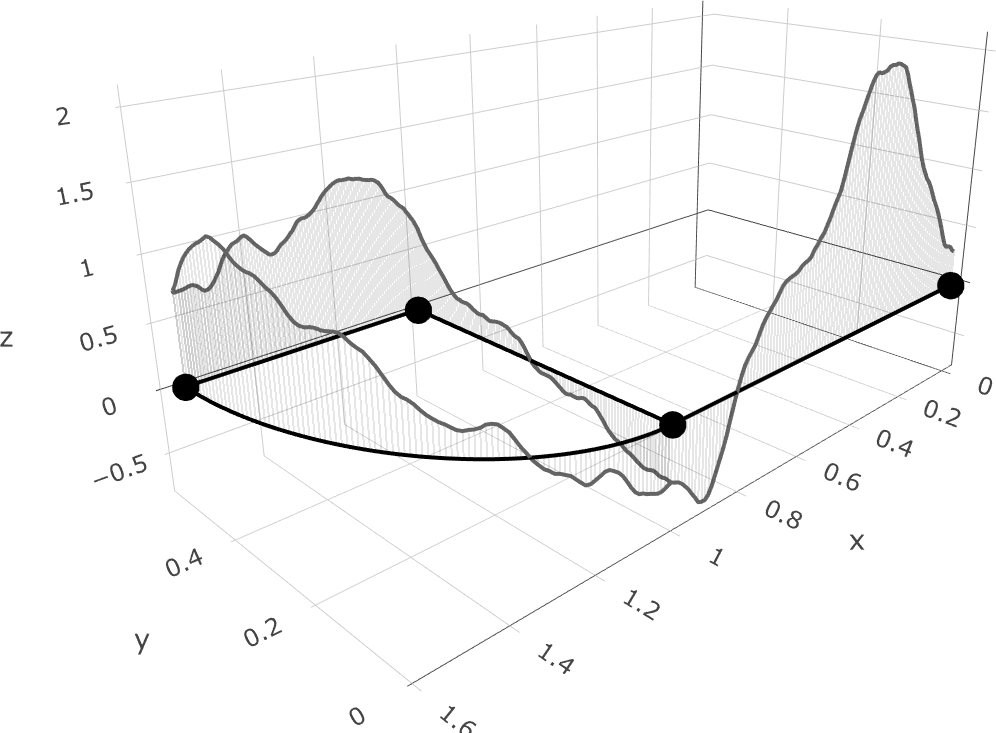}
\caption{Examples of covariance functions (top) and corresponding realizations (bottom) of a Whittle--Mat\'ern field with $\alpha=1$ (left) and $\alpha=2$ (right) on a graph. In both cases, $\kappa=5$ and the covariance $\Cov(u(s_0), u(s))$, where $s_0$ is the midpoint of the edge from $(x,y) = (1,0)$ to $(x,y) = (1,\nicefrac{1}{(1+\nicefrac{\pi}{4})})$,  is shown. }
\end{figure}

Defining Gaussian random fields on graphs has recently received some attention.  
In particular various methods based on the graph Laplacian have been proposed \cite{Alonso2021,dunson2020graph,borovitskiy2021matern}. 
However, these approaches do not define the Gaussian random field on the metric graph, but only on the vertices. 
Since it is unclear if methods based on the graph Laplacian can be used to define a Gaussian process on the entire metric graph, we will not consider these approaches in this work. 
Another important advantage of the proposed processes is that, as we will show, one can add vertices on existing edges, and/or remove vertices of degree two, in the graph without changing the process. 
Approaches based on the graph Laplacian lack this property, which means that their finite dimensional distributions are changed when adding observation locations to the graph, which clearly is not ideal for applications to data on metric graphs. For example, one cannot perform spatial prediction to unobserved locations without adding them as vertices in the graph prior to fitting the model, which would mean that one would change the model depending on whether one wants to perform prediction or not. Further, adding all prediction locations could become highly computationally intensive since it would increase the model size far beyond the size of the data. 

The outline of this work is as follows. In Section~\ref{sec:graphs}, we introduce the notion of compact metric graphs in detail and provide some of the key results from quantum graph theory which we will need in later sections. In Section~\ref{sec:spde} we introduce the Whittle--Mat\'ern fields via \eqref{eq:Matern_spde} and prove that the SPDE has a unique solution. 
Section~\ref{sec:properties} provides results on the sample path regularity of the corresponding Whittle--Mat\'ern fields. 
The proofs of these results are provided in Section~\ref{app:proofsregularitysection}.
Further regularity properties of the covariance function of the solution are derived in Section \ref{sec:spectralrepr}, which also contains the derivation of a spectral representation which can be used to simulate the fields. Section \ref{sec:mean-square} contains results on the mean-square differentiability of the fields and Section~\ref{sec:isotropic} contains a brief comparison to Gaussian fields with isotropic covariance functions. The article concludes with a discussion in Section~\ref{sec:discussion}.

\section{Quantum graphs and notation}\label{sec:graphs}
In this section, we introduce the notation that will be used throughout the article as well as some of the key concepts from quantum graph theory that we will need in later sections. We refer to \cite{Berkolaiko2013} for a complete introduction to the field.

\subsection{Compact metric graphs and notation}
A finite undirected metric graph $\Gamma$ consists of a finite set of vertices 
$\mathcal{V}=\{v_i\}$ and a set $\mathcal{E}=\{e_i\}$ of edges connecting 
the vertices. We write $u\sim v$ or $(u,v)\in\mathcal{E}$ if $u$ and $v$ 
are connected with an edge. Each edge $e$ is defined by a pair of vertices and a length $\ell_e \in (0,\infty)$. For every $v\in\mathcal{V}$, we denote the set of edges incident to the vertex $v$ by $\mathcal{E}_v$. We assume that the graph is connected, so that there is a path between all vertices, and  we assume that the degree of each vertex (i.e., the number of edges connected to it) is finite. Since the lengths $\ell_e$ are also finite,this means that the graph is compact. 
A natural choice of metric for the graph is the shortest path distance, which for any two points in $\Gamma$ is defined as the length of the shortest path in $\Gamma$ connecting the two. We denote this metric by $d(\cdot,\cdot)$ from now on.  
A point $s\in \Gamma$ is a position on some edge $e$, and can be represented as $s=(e,t)$ where $t\in[0,\ell_e]$. 

\begin{Remark}
A typical example of an edge is a simple (i.e., without self intersection) piecewise $C^1$ curve $\gamma:[0,\ell_e]\to \mathbb{R}^d$, $0<\ell_e<\infty$, for some $d\in\mathbb{N}$, which is regular (i.e., for 
all $t\in [0,\ell_e]$ such that $\gamma$ is differentiable at $t$, we have $\gamma'(t)\neq 0$) and is parameterized by arc-length. It is a basic result in differential geometry that every regular piecewise $C^1$ curve admits a parameterization given by arc-length. Therefore, if we take the edge $e$ to be induced by the curve $\gamma$, then given $t,t'\in [0,\ell_e]$, with $t<t'$, the distance between the points $s = (e,t)$ and $s'=(e,t')$ is the length of the curve $\gamma([t,t'])$, which is given by $L(\gamma|_{[t,t']}) = t'-t$. Thus, a cure of this type is a suitable choice for edges in compact metric graphs, since the curve under these assumptions is isometric to a closed interval.	
\end{Remark}

A function $f:\Gamma\to\mathbb{R}$ is a collection of functions $\{f_e\}$ defined on the edges $e\in\mathcal{E}$. For each edge $e\in\mathcal{E}$, $f|_e = f_e$, so we will always use the subscript $e$ to indicate the restriction of a function on $\Gamma$ to an edge $e$. Moreover, for each $e\in\mathcal{E}$, we will identify $e$ with its parameterization in the interval $[0,\ell_e]$, so we will always regard $f_e$ as a function defined on $[0,\ell_e]$.

We define $L_2(\Gamma)$ as the direct sum of the $L_2(e)$ spaces on the edges of $\Gamma$,
$L_2(\Gamma) = \bigoplus_{e \in \mathcal{E}} L_2(e)$, and equip it with the norm  $\|f\|_{L_2(\Gamma)}^2 =  \sum_{e\in\mathcal{E}}\|f_e\|_{L_2(e)}^2$.
That is, we have ${f = \{f_e\}_{e\in \mathcal{E}} \in L_2(\Gamma)}$ if ${f_e \in L_2(e)}$ for each $e\in \mathcal{E}$.
Here, $L_2(e)$ is defined with respect to the Lebesgue measure on $e$. 
We further let $C(\Gamma) = \{f\in L_2(\Gamma): f\hbox{ is continuous}\}$ denote the space of continuous functions on $\Gamma$  and define ${\|f\|_{C(\Gamma)} = \sup\{|f(s)|: s\in\Gamma\}}$ as the supremum norm of a function $f$. 
Similarly, for $0<\gamma<1$, 
we let $C^{0,\gamma}(\Gamma)$ denote the space of $\gamma$-H\"older continuous functions, that is, all $f\in C(\Gamma)$ with ${\|f\|_{C^{0,\gamma}(\Gamma)}<\infty}$. 
Here $\|f\|_{C^{0,\gamma}(\Gamma)} = \|f\|_{C(\Gamma)} + [f]_{C^{0,\gamma}(\Gamma)}$ is the $\gamma$-H\"older norm and
$$
[f]_{C^{0,\gamma}(\Gamma)} = \sup\left\{\frac{|f(s)-f(s')|}{d(s,s')^{\gamma}}: s,s'\in \Gamma, s\neq s' \right\},
$$
is the $\gamma$-H\"older seminorm.

We define the Sobolev space $H^1(\Gamma)$ as the space of all continuous functions $f\in C(\Gamma)$ that satisfy 
$
\|f\|_{H^1(\Gamma)}^2 = \sum_{e\in\mathcal{E}} \|f_e\|_{H^1(e)}^2 < \infty,
$
where $H^1(e)$ is the Sobolev space of order 1 on the edge $e$. That is, $H^1(\Gamma) = C(\Gamma)\cap \bigoplus_{e\in\mathcal{E}} H^1(e)$.
Given $f\in H^1(\Gamma)$, the weak derivative of $f$, denoted by $f'$, $Df$, $df/ds$, etc., 
is defined as the function $f'\in L_2(\Gamma)$ such that for every $e\in\mathcal{E}$, $f'|_{e}(t) = (f|_e)'(t)$, a.e., 
where $(f|_e)'$ is the weak derivative of $f|_e$, which is well-defined since $f|_e \in H^1(e)$.
Observe that $f'$ is the unique function in $L_2(\Gamma)$ such that 
$\forall \phi\in \bigoplus_{e\in\mathcal{E}} C^\infty_c(e)$,
$$
\sum_{e\in\mathcal{E}} \int_0^{\ell_e} f_e(t)\phi_e'(t) dt = -\sum_{e\in\mathcal{E}} \int_0^{\ell_e} f_e'(t) \phi_e(t) dt.
$$
Now, note that the continuity assumption on $H^1(\Gamma)$ guarantees that $f\in H^1(\Gamma)$ is uniquely defined at the vertices. 
Dropping the continuity requirement, one can construct decoupled Sobolev spaces of any order $k\in\mathbb{N}$  as the direct sum 
$\widetilde{H}^k(\Gamma) = \bigoplus_{e\in \mathcal{E}} H^k(e)$, which is
endowed with the Hilbertian norm $
{\|\cdot\|_{\widetilde{H}^k(\Gamma)}^2 = \sum_{e\in\mathcal{E}} \|\cdot\|_{H^k(e)}^2},
$ where $H^k(e)$ is the Sobolev space of order $k$ on the edge $e$. See Appendix~\ref{app:sob} 
for further details about these Sobolev spaces.
Finally, for $0<\alpha<1$, we define the fractional Sobolev space of order $\alpha$ by the real interpolation between the spaces $L_2(\Gamma)$ and $H^1(\Gamma)$ through the so-called $K$-method, 
$H^\alpha(\Gamma) = (L_2(\Gamma), H^1(\Gamma))_\alpha.$
See Appendix \ref{app:interpolation} and Appendix \ref{app:sob} 
for further details on real interpolation of Hilbert spaces.

\begin{Remark}
One should note that the choice of the $K$-method in $H^\alpha(\Gamma)$ is not relevant here.
Indeed, since $L_2(\Gamma)$ and $H^1(\Gamma)$ are Hilbert spaces, with $H^1(\Gamma)\subset L_2(\Gamma)$,
it follows that the $K$-method and the $J$-method of interpolations are equivalent, see for instance
\cite[Theorem 3.1]{chandlerwildeetal}. Actually, any geometric interpolation space will
yield equivalent fractional Sobolev spaces \cite[Theorem 3.5]{chandlerwildeetal}. In
particular, the complex interpolation method also yields equivalent fractional spaces
\cite[Remark 3.6]{chandlerwildeetal}.
\end{Remark}

We let $(\Omega, \mathcal{F},\mathbb{P})$ be a complete probability space, 
and for a real-valued random variable $Z$, we let $\pE(Z) = \int_\Omega Z(\omega) d\mathbb{P}(\omega)$ 
denote its expectation. Further, $L_2(\Omega)$ denotes the Hilbert space of all (equivalence classes of) 
real-valued random variables with finite second moment. Finally, for a separable Hilbert 
space $(E, \|\cdot \|_E)$, we let $L_2(\Omega; E)$ be the space of $E$-valued Bochner measurable random 
variables with finite second moment, which is equipped with the norm 
$\|u\|_{L_2(\Omega; E)}^2 = \int_\Omega \|u(\omega)\|_E^2 d\mathbb{P}(\omega)$. 

\subsection{Quantum graphs}\label{subsec:quantumgraphs}
A metric graph coupled with a differential operator on that graph is referred to as a quantum 
graph. The most important differential operator in this context is the Laplacian. However, 
there is no unique way of defining the Laplacian on a metric graph. For a function $f\in\widetilde{H}^2(\Gamma)$, the operator is naturally defined as the second derivative on each edge, but at the vertices there are several options of ``boundary conditions'' or \emph{vertex conditions}. One of the most popular choices is the Kirchhoff conditions
$
\left\{\mbox{$f $ is continuous on $\Gamma$ and $\forall v\in\mathcal{V}: \sum_{e \in\mathcal{E}_v} \partial_e f(v) = 0$} \right\},
$
where $\partial_e$ denotes the directional derivative away from the vertex. 
If $f\in\widetilde{H}^2(\Gamma)$, this implies that $f$ restricted to each edge 
is differentiable and the Kirchhoff conditions are therefore well-defined 
\cite{Arioli2017FEM}. 
The Laplacian with these vertex conditions is often denoted as the  Kirchhoff-Laplacian, 	 
which we here simply denote by $\Delta_{\Gamma}$. Let 
\begin{equation}\label{eq:Kspace}
K(\Gamma) = \Bigl\{f\in\widetilde{H}^2(\Gamma): \textstyle \sum_{e\in\mathcal{E}_v} \partial_e f(v) = 0\Bigr\},
\end{equation} 
then by \cite[Section~2.3]{kostenkoetal}, the domain of the Kirchhoff Laplacian $\Delta_\Gamma$ is 
$\mathcal{D}(\Delta_\Gamma) = \widetilde{H}^2(\Gamma) \cap C(\Gamma) \cap K(\Gamma)$, 
and  $\Delta_\Gamma : \mathcal{D}(\Delta_\Gamma) \rightarrow L_2(\Gamma)$ 
is defined as $\Delta_\Gamma := \oplus_{e\in \mathcal{E}}\Delta_e$, where $\Delta_e = \nicefrac{d^2}{d t^2}$ with $\mathcal{D}(\Delta_e) = H^2(e)$.

By \cite[][Theorem 1.4.4]{Berkolaiko2013}, $\Delta_\Gamma$ is self-adjoint. It turns out that this particular 
Laplacian is convenient for defining Whittle--Mat\'ern fields on metric graphs, and we from now on only consider 
this choice. A few reasons for this is that the Kirchhoff vertex conditions are the natural extension of Neumann 
boundary conditions to the graph setting, and they are, because of this, in fact, often referred to as 
Kirchhoff--Neumann, Neumann, or even ``standard'' boundary conditions \cite{Berkolaiko2013}. Further, 
they enforce continuity across the vertices without completely decoupling the problem as would be the case if homogeneous Dirichlet vertex conditions were used (i.e., assuming that $f(v)  = 0$ for each vertex $v$).

We let $\{\hat{\lambda}_i\}_{i\in\mathbb{N}}$ denote the eigenvalues of $\Delta_{\Gamma}$, sorted in non-decreasing order, and let $\{\eig_i\}_{i\in\mathbb{N}}$ denote the corresponding eigenfunctions. 
By Weyl's law \cite{Odzak2019Weyl} we have that $\hat{\lambda}_i \sim i^2$ as $i\rightarrow\infty$, so there exists constants $c_\lambda, C_\lambda$ such that $0<c_\lambda< C_\lambda<\infty$ and
\begin{equation}\label{eq:weyl}
\forall i\in\mathbb{N}, \quad c_\lambda i^2 \leq \hat{\lambda}_i \leq C_\lambda i^2.
\end{equation}

\section{Whittle--Mat\'ern fields on compact metric graphs}\label{sec:spde}
Let $\kappa^2>0$ and define the differential operator 
\begin{equation}\label{eq:Loperator}
Lu = (\kappa^2 - \Delta_\Gamma) u, \quad u\in \mathcal{D}(L) = \mathcal{D}(\Delta_\Gamma) \subset L_2(\Gamma).
\end{equation}
The operator induces a symmetric and continuous bilinear form
\begin{equation}\label{eq:bilinearform}
a_L : H^1(\Gamma) \times H^1(\Gamma) \rightarrow \mathbb{R}, \quad a_L(\phi,\psi) = (\kappa^2 \phi,\psi)_{L_2(\Gamma)} + \sum_{e\in\mathcal{E}}(\phi_e',\psi_e')_{L_2(e)},
\end{equation}
which is coercive with coercivity constant $\min(1,\kappa)$ \cite{Arioli2017FEM}. 
Furthermore, it is clear,
from our discussion in subsection \ref{subsec:quantumgraphs} together
with \eqref{eq:bilinearform}, that the operator is densely 
defined and positive definite. Since the metric graph is compact, $L$ has a discrete spectrum, where each eigenvalue has finite multiplicity \cite[][Chapter 3]{Berkolaiko2013}. Clearly, the operator $L$ diagonalizes with respect to the eigenfunctions of $\Delta_{\Gamma}$, and it has eigenvalues $\{ \lambda_i\}_{i\in\mathbb{N}} = \{\kappa^2 + \hat{\lambda}_i\}_{i\in\mathbb{N}}$. 
In order to define the Whittle--Mat\'ern fields, we introduce the fractional operator $L^{\nicefrac{\alpha}{2}}, \alpha>0$, in the spectral sense as follows \cite[see also][]{BKK2020}. 
Define the Hilbert space 
\begin{equation}\label{eq:Hdot_def}
\dot{H}^{\alpha} = \mathcal{D}(L^{\nicefrac{\alpha}{2}}) := \{ \phi \in L_2(\Gamma) : \|\phi\|_{\alpha} < \infty\},	
\end{equation}
with inner product $(\phi,\psi)_{\alpha} := (L^{\nicefrac{\alpha}{2}}\phi, L^{\nicefrac{\alpha}{2}} \psi)_{L_2(\Gamma)}$ and induced norm $\|\phi\|_{\alpha} = \|L^{\nicefrac{\alpha}{2}} \phi\|_{L_2(\Gamma)}$. Here, the action of $L^{\nicefrac{\alpha}{2}} : \mathcal{D}(L^{\nicefrac{\alpha}{2}}) \rightarrow L_2(\Gamma)$ is defined by
$$
L^{\nicefrac{\alpha}{2}} \phi := \sum_{i\in\mathbb{N}} \lambda_i^{\nicefrac{\alpha}{2}} (\phi,\eig_i)_{L_2(\Gamma)}\eig_i, \quad \phi \in \mathcal{D}(L^{\nicefrac{\alpha}{2}}),
$$
where $\{\varphi_i\}_{i\in\mathbb{N}}$ are the eigenfunctions of $L$, and thus $\|\phi\|_{\alpha}^2 =  \sum_{i\in\mathbb{N}} \lambda_i^{\alpha } (\phi,\eig_i)_{L_2(\Gamma)}^2$. We denote the dual space of $\dot{H}^{\alpha}$ by $\dot{H}^{-\alpha}$, which has norm $\|\phi\|_{-\alpha}^2 = \sum_{i\in\mathbb{N}} \lambda_i^{-\alpha} \langle\phi,\eig_i\rangle^2$, where $\langle\cdot,\cdot \rangle$ denotes the duality pairing between $\dot{H}^{-\alpha}$ and $\dot{H}^{\alpha}$.

Let $\mathcal{W}$ denote Gaussian white noise on $L_2(\Gamma)$, which we may, formally, represent  through the series expansion $\mathcal{W} = \sum_{i\in\mathbb{N}} \xi_i \eig_i$ $\mathbb{P}$-a.s., where $\{\xi_i\}_{i\in\mathbb{N}}$ are independent standard Gaussian random variables on $(\Omega, \mathcal{F},\mathbb{P})$. By \cite[][Proposition 2.3]{BKK2020} this series converges in $L_2(\Omega; \dot{H}^{-s})$ for any $s>\nicefrac12$, and thus, $\mathcal{W} \in \dot{H}^{-\nicefrac1{2}-\epsilon}$ holds $\mathbb{P}$-a.s.\ for any $\epsilon>0$. Alternatively, we can represent Gaussian white noise as a family of centered Gaussian variables $\{\mathcal{W}(h) : h\in L_2(\Gamma)\}$ which satisfy
\begin{equation}\label{isometryW}
\forall h,g\in L_2(\Gamma), \quad \pE[\mathcal{W}(h)\mathcal{W}(g)] = (h,g)_{L_2(\Gamma)}.
\end{equation}

We define the Whittle--Mat\'ern fields through the fractional-order equation
\begin{equation}\label{eq:spde}
L^{\nicefrac{\alpha}{2}} (\tau u) = \mathcal{W}, 
\end{equation}
where $\tau>0$ is a constant that controls the marginal variances of $u$. 

\begin{Proposition}\label{prp:existuniqsol}
Given that $\alpha>\nicefrac12$,  \eqref{eq:spde} has a unique solution $u\in L_2(\Gamma)$ ($\mathbb{P}$-a.s.). 
\end{Proposition}
\begin{proof}
Since the operator $L$ is densely defined, positive definite
and has a compact resolvent, the result follows directly from the fact that $\mathcal{W} \in \dot{H}^{-\nicefrac1{2}-\epsilon}$ holds $\mathbb{P}$-a.s.\ for any $\epsilon>0$, in combination with \cite[][Lemma~2.1]{BKK2020}.	
\end{proof}
The solution $u$ is a centered Gaussian random field satisfying
\begin{equation}\label{eq:solspde}
\forall \psi\in L_2(\Gamma),\quad (u,\psi)_{L_2(\Gamma)} = \mathcal{W}(\tau^{-1}L^{-\nicefrac{\alpha}{2}}\psi) \quad \mathbb{P}\mbox{-a.s.},
\end{equation}
and it has covariance operator $\mathcal{C} = \tau^{-2}L^{-\alpha}$ satisfying
$
(\mathcal{C}\phi,\psi)_{L_2(\Gamma)} = \pE[(u,\phi)_{L_2(\Gamma)}(u,\psi)_{L_2(\Gamma)}]
$ for all ${\phi,\psi \in L_2(\Gamma)}$. 
We let $\varrho$ denote the covariance function corresponding to $\mathcal{C}$, defined as 
\begin{equation}\label{eq:covfunc}
\varrho(s,s') = \pE(u(s)u(s')) \quad \mbox{a.e. in $\Gamma\times \Gamma$}.
\end{equation}
Indeed, by Proposition \ref{prp:existuniqsol} and \cite[Remark 2.4]{BKK2020}, we have that $u\in L_2(\Omega,L_2(\Gamma))$. Thus,
by Fubini's theorem, $\pE(u(s)^2)<\infty$ for almost every $s\in\Gamma$. Therefore, by Cauchy-Schwarz inequality,
\eqref{eq:covfunc} exists and is well-defined for almost every $(s,s')\in \Gamma\times\Gamma$. We also have that $\varrho$ is, indeed, the
kernel of the covariance operator. This is a well-known fact in the theory of $L_2$ processes, but for completeness we also prove this in  
Lemma~\ref{prp:covintegralrep} in Section \ref{sec:spectralrepr}.

A crucial property of the Whittle--Mat\'ern fields is that we can remove or add vertices of degree 2 to the graph without changing the model. This is important since we then may, for example, add vertices to the graph at observation locations without changing the model.
We formulate this as a proposition. 

\begin{Proposition}\label{prop:join}
Suppose that $\Gamma$ has a vertex $v_i$ of degree 2, connected to edges $e_k$ and $e_\ell$, and define $\widetilde{\Gamma}$ as the graph where $v_i$ has been removed and $e_k,e_\ell$ merged to one new edge $\widetilde{e}_k$. Let $u$ be the solution to \eqref{eq:spde} on $\Gamma$, and let $\widetilde{u}$ be the solution to  \eqref{eq:spde} on $\widetilde{\Gamma}$, then $u$ and $\widetilde{u}$ have the same covariance function.
\end{Proposition}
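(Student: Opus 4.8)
The plan is to show that the two graphs, though combinatorially different, give rise to the \emph{same} self-adjoint operator $L$ once $\Gamma$ and $\widetilde\Gamma$ are identified as metric measure spaces, and then to read equality of the covariance functions directly off the spectral representation of $\varrho$. First I would observe that $\Gamma$ and $\widetilde\Gamma$ are isometric as metric spaces: merging $e_k$ and $e_\ell$ into $\widetilde e_k$ only relabels the interior point $v_i$ and changes no shortest-path distance. This isometry is measure preserving and induces a canonical unitary map $\Phi\colon L_2(\Gamma)\to L_2(\widetilde\Gamma)$, built from the identification $L_2(e_k)\oplus L_2(e_\ell)\cong L_2(\widetilde e_k)$ together with the identity on all remaining edges.

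The heart of the argument is to verify that $\Phi$ carries the quadratic form of $L$ on $\Gamma$ to that on $\widetilde\Gamma$. I would work at the level of the form $a_L$ in \eqref{eq:bilinearform} rather than the operator domain, since there the Kirchhoff conditions enter only as natural (rather than essential) boundary conditions. The form domain is $H^1(\Gamma)$, whose only constraint is continuity at the vertices together with edgewise $H^1$ regularity. At the degree-two vertex $v_i$ this continuity is exactly the condition that the edgewise $H^1$ functions on $e_k$ and $e_\ell$ agree at $v_i$, which by the standard gluing property of Sobolev functions on an interval is precisely the requirement that the corresponding function lie in $H^1(\widetilde e_k)$; at every other vertex the continuity constraints coincide on the two graphs. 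Hence $\Phi$ restricts to a norm-preserving bijection $H^1(\Gamma)\to H^1(\widetilde\Gamma)$, and since $\Phi$ preserves both the $L_2$-inner product and the sum of edgewise Dirichlet energies $\sum_e(\phi',\psi')_{L_2(e)}$ (merging two edges merely combines two integrals into one), we obtain $a_L^\Gamma(\phi,\psi)=a_L^{\widetilde\Gamma}(\Phi\phi,\Phi\psi)$ for all $\phi,\psi\in H^1(\Gamma)$.

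By the first representation theorem, a densely defined, coercive, symmetric form determines its associated self-adjoint operator uniquely, so this form identity yields $\Phi L_\Gamma\Phi^{-1}=L_{\widetilde\Gamma}$. The functional calculus then gives $\Phi L_\Gamma^{-\alpha}\Phi^{-1}=L_{\widetilde\Gamma}^{-\alpha}$, so the covariance operators $\mathcal{C}=\tau^{-2}L^{-\alpha}$ on the two graphs are unitarily equivalent through $\Phi$. Concretely, $\Phi$ sends each eigenpair $(\hat\lambda_j,\eig_j)$ of $\Delta_\Gamma$ to an eigenpair $(\hat\lambda_j,\widetilde\eig_j)$ of $\Delta_{\widetilde\Gamma}$ with the same eigenvalue, where $\widetilde\eig_j=\Phi\eig_j$ agrees with $\eig_j$ under the point identification. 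Substituting into the Mercer-type expansion of Proposition \ref{cor:mercercov},
$$
\varrho_\Gamma(s,s')=\sum_{j=1}^\infty \frac{\eig_j(s)\eig_j(s')}{(\kappa^2+\hat\lambda_j)^\alpha}=\sum_{j=1}^\infty \frac{\widetilde\eig_j(\widetilde s)\widetilde\eig_j(\widetilde s')}{(\kappa^2+\hat\lambda_j)^\alpha}=\varrho_{\widetilde\Gamma}(\widetilde s,\widetilde s'),
$$
for almost every pair of identified points, which is the claimed equality.

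The step I expect to require the most care is the identification of the form domains, in particular the bookkeeping that the Kirchhoff vertex conditions at a degree-two vertex reduce to the continuity already built into $H^1$. Working with the form avoids the otherwise delicate orientation accounting for the derivative condition $\partial_{e_k}f(v_i)+\partial_{e_\ell}f(v_i)=0$ that one meets at the operator-domain level, where it must be shown equivalent to $C^1$-matching across $v_i$; it is worth remarking that this equivalence is exactly what makes the two descriptions consistent. A secondary point to state explicitly is that, because $\Phi$ is induced by a measure-preserving isometry of the underlying spaces, unitary equivalence of the covariance operators genuinely entails pointwise (a.e.) equality of their kernels, so no separate argument is needed to pass from operators to covariance functions.
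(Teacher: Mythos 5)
Your proposal is correct and follows essentially the same route as the paper's proof: identify $H^1(\Gamma)$ with $H^1(\widetilde{\Gamma})$ (the Kirchhoff condition at a degree-two vertex being natural for the form $a_L$ in \eqref{eq:bilinearform}), observe that the bilinear forms coincide, deduce that $\Delta_\Gamma$ and $\Delta_{\widetilde{\Gamma}}$ share eigenvalues and (identified) eigenfunctions, and conclude via the spectral representation of the covariance. You merely make explicit, via the unitary $\Phi$ and the $H^1$ gluing at $v_i$, what the paper states in compressed form, so the two arguments are the same in substance.
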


\begin{proof}
Let $a_L : H^1(\Gamma) \times H^1(\Gamma) \rightarrow \mathbb{R}$ denote the bilinear form corresponding to $L$ on $\Gamma$, and let $\widetilde{a}_L$ denote the corresponding bilinear form on $\widetilde{\Gamma}$. 
Note that $H^1(\Gamma) = H^1(\widetilde{\Gamma})$ due to the continuity requirement, and since the bilinear forms  coincide for all functions on ${H^1(\Gamma)= H^1(\widetilde{\Gamma})}$,  the 
Kirchhoff-Laplacians $\Delta_\Gamma$ and $\Delta_{\widetilde{\Gamma}}$ have the same eigenfunctions and eigenvalues. Therefore, the covariance functions of the random fields $u$ and $\widetilde{u}$ as defined in terms of their spectral 
expansions (see Proposition \ref{prp:mercercov}) coincide. 
\end{proof}

\section{Sample path regularity}\label{sec:properties}
The goal of this section is to characterize the regularity  of solutions $u$ to the SPDE \eqref{eq:spde}. We will derive two main results. The first, Theorem~\ref{regularity1}, provides sufficient conditions for $u$ to have, $\bbP$-a.s., continuous sample paths, and sufficient conditions for existence of the weak derivatives. 
In the second regularity result, Theorem \ref{thm:weakregularity}, we fine-tune the conditions and are then able to ensure continuity of the sample paths even when  $u$ does not belong to $\dot{H}^1$. Furthermore, we are able to obtain H\"older continuity, with an order depending on the fractional exponent $\alpha$ of \eqref{eq:spde}. 

To simplify the exposition, we postpone the proofs of all statements in this section to Section~\ref{app:proofsregularitysection}. However, one of the key ideas of the proofs is to use interpolation theory (see Appendix~\ref{app:interpolation}) 
to characterize the spaces $\dot{H}^\alpha$. Another main idea is to exploit a novel Sobolev embedding for compact metric graphs, namely Theorem \ref{thm:sobembedding}, which we introduce and prove in this paper. 
We begin with the statement of our first regularity 
result.

\begin{Theorem}\label{regularity1}
Let $u$ be the solution of \eqref{eq:spde}.  
Furthermore, let $\alpha^\ast = \ceil{\alpha - \nicefrac{1}{2}}  -1$,
where $\ceil{a}$ denotes smallest integer larger than or equal to $a\in\mathbb{R}$. Then, we have that 
\begin{enumerate}
	\item for every $\epsilon>0$, $u\in L_2(\Omega; \dot{H}^{\alpha-\nicefrac{1}{2}-\epsilon})$;
	\item $u\in H^1(\Gamma)$ $\bbP$-a.s. if and only if $\alpha > \nicefrac{3}{2}$;
	\item for $j=1,\ldots, \alpha^\ast$, $D^j u$ exists $\bbP$-a.s.;
	\item for $\alpha\geq 2$ and ${1\leq k \leq \alpha^\ast-1}$, 
	such that $k$ is an odd integer, we have that 
	${\sum_{e\in\mathcal{E}_v} \partial_e^k u(v) =0}$, $\bbP$-a.s., for every $v\in\mathcal{V}$, where $\partial_e^k u$ is the directional derivative of $D^{k-1}u$.
\end{enumerate}
\end{Theorem}

Recall that, for two Hilbert spaces $E, F$, we have the continuous embedding $E\hookrightarrow F$ if the inclusion map from $E$ to $F$ is continuous, i.e., there is a $C>0$ such that for every $f\in E$, $\|f\|_F \leq C \|f\|_E$.

\begin{Theorem}[Sobolev embedding for compact metric graphs]\label{thm:sobembedding}
Let $\nicefrac{1}{2} < \alpha \leq 1$ and $\Gamma$ be a compact metric graph. We have the continuous embedding 
$H^\alpha(\Gamma) \hookrightarrow C^{0,\alpha-\nicefrac{1}{2}}(\Gamma).$
\end{Theorem}

As a direct consequence we also have the embedding:

\begin{Corollary}\label{cor:sobembeddingHdot}
Let $\alpha>\nicefrac{1}{2}$ and define $\widetilde{\alpha} = \alpha - \nicefrac{1}{2}$ if $\alpha\leq 1$ and $\widetilde{\alpha}=\nicefrac{1}{2}$ if $\alpha>1$. Then
$\dot{H}^\alpha \hookrightarrow C^{0,\widetilde{\alpha}}(\Gamma).$
\end{Corollary}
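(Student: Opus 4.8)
The plan is to deduce the corollary from Theorem~\ref{thm:sobembedding} by identifying the spectrally defined spaces $\dot{H}^\alpha$ with the interpolation Sobolev spaces $H^\alpha(\Gamma)$ in the range $0\le\alpha\le 1$, and by a one-line spectral comparison to handle $\alpha>1$. The cornerstone of both steps is the norm-equivalence identity $\dot{H}^1 = H^1(\Gamma)$, after which the embeddings follow by composition.

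First I would establish $\dot{H}^1 = H^1(\Gamma)$. Recall from \eqref{eq:Loperator}--\eqref{eq:bilinearform} that $L=\kappa^2-\Delta_\Gamma$ is a positive, self-adjoint operator whose associated symmetric bilinear form is $a_L$, with form domain $H^1(\Gamma)$; by the coercivity of $a_L$ with constant $\min(1,\kappa)$, the form norm $a_L(\phi,\phi)^{1/2}$ is equivalent to $\|\phi\|_{H^1(\Gamma)}$. Since for a positive self-adjoint operator the domain of $L^{1/2}$ coincides with the form domain and $\|L^{1/2}\phi\|_{L_2(\Gamma)}^2 = a_L(\phi,\phi)$, we obtain $\dot{H}^1 = \mathcal{D}(L^{1/2}) = H^1(\Gamma)$ with equivalent norms. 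Combined with the trivial identity $\dot{H}^0 = L_2(\Gamma)$, and using that the real interpolation space between $L_2(\Gamma)$ and $\mathcal{D}(L^{1/2})$ is exactly $\mathcal{D}(L^{\alpha/2})$ (a standard property of fractional powers of positive self-adjoint operators on a Hilbert space; see Appendix~\ref{app:interpolation}), this yields, for $0<\alpha<1$,
\[
\dot{H}^\alpha = \big(L_2(\Gamma),\dot{H}^1\big)_\alpha = \big(L_2(\Gamma), H^1(\Gamma)\big)_\alpha = H^\alpha(\Gamma),
\]
with equivalent norms.

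With these identifications the two cases are immediate. If $\nicefrac{1}{2}<\alpha\le 1$, then $\widetilde{\alpha} = \alpha-\nicefrac{1}{2}$ and $\dot{H}^\alpha = H^\alpha(\Gamma) \hookrightarrow C^{0,\alpha-\frac{1}{2}}(\Gamma)$ directly by Theorem~\ref{thm:sobembedding} (the case $\alpha=1$ is included, giving $\widetilde{\alpha}=\nicefrac{1}{2}$). If $\alpha>1$, then $\widetilde{\alpha}=\nicefrac{1}{2}$, and since $\lambda_j = \kappa^2 + \hat{\lambda}_j \ge \lambda_1 > 0$ while $1-\alpha<0$, the map $x\mapsto x^{1-\alpha}$ is decreasing, so $\lambda_j = \lambda_j^{\alpha}\lambda_j^{1-\alpha} \le \lambda_1^{1-\alpha}\lambda_j^{\alpha}$ for every $j$. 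Hence $\|\phi\|_1^2 = \sum_{j}\lambda_j(\phi,\eig_j)^2 \le \lambda_1^{1-\alpha}\|\phi\|_\alpha^2$, which gives the continuous embedding $\dot{H}^\alpha\hookrightarrow \dot{H}^1 = H^1(\Gamma)$. Applying Theorem~\ref{thm:sobembedding} with exponent $1$ yields $H^1(\Gamma)\hookrightarrow C^{0,\frac{1}{2}}(\Gamma)$, and composing the two continuous embeddings gives $\dot{H}^\alpha\hookrightarrow C^{0,\frac{1}{2}}(\Gamma)$, as claimed.

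The main obstacle is the identification $\dot{H}^\alpha = H^\alpha(\Gamma)$, that is, showing that the spectral fractional-power scale coincides with the real interpolation scale used to define $H^\alpha(\Gamma)$. This rests on two features specific to our setting: that $\Delta_\Gamma$ is self-adjoint (so that $L$ admits a complete orthonormal eigenbasis and a well-defined functional calculus), and that the form domain of $L$ is \emph{exactly} the continuous Sobolev space $H^1(\Gamma)$, which is precisely what the coercivity of $a_L$ encodes. Once $\dot{H}^1 = H^1(\Gamma)$ is in place, the remaining steps are routine.
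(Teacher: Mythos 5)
Your proof is correct and follows essentially the same route as the paper: reduce the case $\alpha>1$ to $\alpha=1$ via the spectral embedding $\dot{H}^\alpha\hookrightarrow\dot{H}^1$, identify $\dot{H}^\alpha\cong H^\alpha(\Gamma)$ through $\dot{H}^1\cong H^1(\Gamma)$ and real interpolation, and then apply Theorem~\ref{thm:sobembedding}. The only cosmetic difference is that you inline the two auxiliary identifications (which the paper isolates as Proposition~\ref{prp:hdot1} and Corollary~\ref{cor:identificationHalpha}), justifying $\dot{H}^1\cong H^1(\Gamma)$ via Kato's second representation theorem rather than the paper's energetic-space argument — the same standard fact in different clothing.
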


By combining the Sobolev embedding with a suitable version of the Kolmogorov--Chentsov 
theorem, we arrive at the following regularity result, which shows that 
the smoothness parameter $\alpha$ provides precise control over   
$\gamma$-H\"older continuity of the sample paths and their weak derivatives. 

\begin{Theorem}\label{thm:weakregularity}
Fix  $\alpha > \nicefrac{1}{2}$ and let $\widetilde{\alpha} = \min\{\alpha-\nicefrac{1}{2}, \nicefrac{1}{2}\}$.
Let, also, $\lfloor a \rfloor$ denote the largest integer smaller than or equal to $a\in\mathbb{R}$. Then, we have that 
\begin{enumerate}
	\item for $0<\gamma<\widetilde{\alpha}$, the solution $u$ of \eqref{eq:spde} has a modification with $\gamma$-H\"older continuous sample paths;
	\item if $\lfloor\alpha\rfloor$ is even and $\alpha - \lfloor\alpha\rfloor > \nicefrac{1}{2}$, then for any $\tilde{\gamma}$ such that $0<\tilde{\gamma} < \alpha - \lfloor\alpha\rfloor - \nicefrac{1}{2}$ we have 
	that $D^{\lfloor\alpha\rfloor}u$ has a modification with $\tilde{\gamma}$-H\"older continuous sample paths.
	\item if $\alpha \geq 2$ and $\lfloor \alpha \rfloor$ is odd, then for every $0<\gamma < \min\{\alpha-\lfloor \alpha\rfloor,\nicefrac12\}$, we have that $D^{\lfloor \alpha\rfloor -1} u$ has
	a modification with $\gamma$-H\"older continuous sample paths.
	\item if $\alpha\geq 3$, then for
	every $0<\gamma<\nicefrac12$ and every $j=0,\ldots,\left\lfloor\nicefrac{(\alpha - 3)}{2}\right\rfloor$, the derivative $D^{2j}u$ has a modification with $\gamma$-H\"older continuous sample paths.
\end{enumerate}
\end{Theorem}

It is noteworthy that in Theorem \ref{regularity1} and \ref{thm:weakregularity}, we can only establish continuity of the even-order derivatives. The reason is that the odd-order derivatives 
satisfy the vertex condition ${\sum_{e\in\mathcal{E}_v} \partial_e^{2k+1} f(v) = 0}$, where $\partial_e^{2k+1}f$ is the directional derivative of $\partial^{2k}f$. 
This condition does not ensure continuity of $\partial^{2k+1} f$. Another problem is that in order to define continuity of $\partial^{2k+1}f$, one needs to fix, beforehand, a direction for each edge, 
as these derivatives are direction-dependent, which shows that there is no natural way of defining continuity of such functions. On the other hand, the even-order derivatives are direction-free, so their continuity 
is well-defined.
In Section~\ref{sec:mean-square}, we investigate the differentiability of the fields further by connecting the weak derivatives with  $L_2(\Omega)$ derivatives.

\section{Proofs of the results in Section~\ref{sec:properties}}\label{app:proofsregularitysection}
We start by introducing some additional notation that we need in this section. 
For two Hilbert spaces $E$ and $F$, we use the notation $E\cong F$ if $E \hookrightarrow F \hookrightarrow E$. The space of all bounded linear operators from $E$ to $F$ is denoted by $\mathcal{L}(E;F)$, which is a Banach space when equipped with the operator norm ${\|T\|_{\mathcal{L}(E;F)} = \sup_{f\in E\setminus\{0\}}\nicefrac{\|Tf\|_F}{\|f\|_E}}$. We let $E^*$ denote the dual of $E$, which contains all continuous linear functionals $f : E \rightarrow \mathbb{R}$. 
Throughout this section we will, in addition to $s$, use the letters $x$ and $y$ to denote points in $\Gamma$.

Note that the eigenfunctions $\{\eig_i\}_{i\in\mathbb{N}}$ of  $\Delta_\Gamma$ form an 
orthonormal basis of $L_2(\Gamma)$ and is an eigenbasis for $L$ defined in \eqref{eq:Loperator}. 
Also note that $\eig_i\in H^1(\Gamma)$ for every $i\in\mathbb{N}$, and that 
$a_L(\eig_i,\phi) = \lambda_i (\eig_i,\phi)_{L_2(\Gamma)}$, where $a_L$ is defined in 
\eqref{eq:bilinearform} and $\phi\in H^1(\Gamma)$.  Thus, for each $i\in\mathbb{N}$,
\begin{equation}\label{intbypartseigenvec}
(\eig_i', \phi')_{L_2(\Gamma)} = \sum_{e\in\mathcal{E}} (\eig_i'|_e,\phi'|_e)_{L_2(e)} = (\lambda_i-\kappa^2) (\eig_i,\phi)_{L_2(\Gamma)}.
\end{equation}
Now, suppose that $\lambda_i = \kappa^2$ for some
$i\in\mathbb{N}$. Then, $\|\eig_i'\|_{L_2(\Gamma)} = 0$ and $\eig_i$ is constant on each edge $e\in\mathcal{E}$. Since $\eig_i\in H^1(\Gamma)$, we have that $\eig_i$ is continuous and, therefore, $\eig_i$ is constant on $\Gamma$. Thus, we directly have that if $\eig_i$ is non-constant, which means that $\|\eig_i'\|_{L_2(\Gamma)}^2 > 0$, then $\lambda_i > \kappa^2$. This also tells us that the eigenspace associated to the eigenvalue $\lambda_i = \kappa^2$ has dimension 1.

In the proofs we will assume, without loss of generality, that $\tau=1$ to keep 
the notation simple. 
\begin{Proposition}\label{prp:hdot1}
We have the identification
$\dot{H}^1 \cong H^1(\Gamma)$.
\end{Proposition}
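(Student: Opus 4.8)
The plan is to prove the identification by establishing the two continuous embeddings $\dot{H}^1 \hookrightarrow H^1(\Gamma)$ and $H^1(\Gamma)\hookrightarrow \dot H^1$ directly, working in the eigenbasis $\{\eig_j\}$ of $\Delta_\Gamma$. The whole argument rests on two facts that are already available. First, the bilinear form $a_L$ from \eqref{eq:bilinearform} is continuous and coercive on $H^1(\Gamma)$ (with coercivity constant $\min(1,\kappa)$), so that $a_L(\phi,\phi)^{1/2}$ is a norm equivalent to $\|\cdot\|_{H^1(\Gamma)}$. Second, since $a_L(\eig_j,\psi)=\lambda_j(\eig_j,\psi)_{L_2(\Gamma)}$ for every $\psi\in H^1(\Gamma)$ and $\{\eig_j\}$ is orthonormal in $L_2(\Gamma)$, one has $a_L(\eig_j,\eig_k)=\lambda_j\delta_{jk}$. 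Writing $c_j=(\phi,\eig_j)_{L_2(\Gamma)}$ and $\phi_n=\sum_{j\le n}c_j\eig_j$, bilinearity then gives the key identity $a_L(\phi_n,\phi_n)=\sum_{j\le n}\lambda_j c_j^2$; that is, on finite spans of eigenfunctions the form norm coincides exactly with the $\dot H^1$-norm.

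For the embedding $\dot H^1\hookrightarrow H^1(\Gamma)$, I would take $\phi\in\dot H^1$, so that $\sum_j\lambda_j c_j^2=\|\phi\|_1^2<\infty$. The identity above shows $a_L(\phi_n-\phi_m,\phi_n-\phi_m)=\sum_{m<j\le n}\lambda_j c_j^2\to0$, so by coercivity $\{\phi_n\}$ is Cauchy in $H^1(\Gamma)$. Since $H^1(\Gamma)$ is complete (continuity at the vertices is a closed constraint inside the decoupled space $\widetilde H^1(\Gamma)$), $\phi_n$ converges in $H^1(\Gamma)$ to some limit, which must equal $\phi$ because $H^1(\Gamma)\hookrightarrow L_2(\Gamma)$ and $\phi_n\to\phi$ in $L_2(\Gamma)$ by Parseval. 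Passing to the limit in $\|\phi_n\|_{H^1(\Gamma)}^2\le \min(1,\kappa)^{-1}a_L(\phi_n,\phi_n)\le\min(1,\kappa)^{-1}\|\phi\|_1^2$ yields $\phi\in H^1(\Gamma)$ with $\|\phi\|_{H^1(\Gamma)}\le C\|\phi\|_1$.

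For the reverse embedding $H^1(\Gamma)\hookrightarrow\dot H^1$, I would take $\phi\in H^1(\Gamma)$ and use $a_L(\eig_j,\phi)=\lambda_j c_j$ (valid by \eqref{intbypartseigenvec} since $\phi\in H^1(\Gamma)$) together with bilinearity to get $S_n:=\sum_{j\le n}\lambda_j c_j^2=a_L(\phi_n,\phi)$. The Cauchy--Schwarz inequality for the positive semidefinite form then gives $S_n=a_L(\phi_n,\phi)\le a_L(\phi_n,\phi_n)^{1/2}a_L(\phi,\phi)^{1/2}=S_n^{1/2}\,a_L(\phi,\phi)^{1/2}$, hence $S_n\le a_L(\phi,\phi)$ uniformly in $n$. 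Letting $n\to\infty$ and using continuity of $a_L$ gives $\|\phi\|_1^2=\sum_j\lambda_j c_j^2\le a_L(\phi,\phi)\le C\|\phi\|_{H^1(\Gamma)}^2<\infty$, so $\phi\in\dot H^1$ with a continuous embedding. Combining the two bounds yields $\dot H^1\cong H^1(\Gamma)$.

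The main obstacle is not any single estimate but making the limiting arguments rigorous: one must verify that the $H^1(\Gamma)$-limit of the partial sums $\phi_n$ is genuinely $\phi$ (which needs completeness of $H^1(\Gamma)$ and the continuity of the embedding into $L_2(\Gamma)$ so that the two limits can be identified), and that the uniform-in-$n$ bound on $S_n$ in the second direction is obtained from the form alone, \emph{without} presupposing $\phi\in\dot H^1$. Everything else reduces to the norm equivalence supplied by coercivity and continuity of $a_L$ and to Parseval's identity for the orthonormal eigenbasis $\{\eig_j\}$. Conceptually, this is just the statement that the form domain $\mathcal{D}(L^{1/2})$ of the Kirchhoff operator coincides with $H^1(\Gamma)$, but the elementary eigenexpansion route above keeps the argument self-contained.
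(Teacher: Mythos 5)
Your proof is correct, but it follows a genuinely different route than the paper. The paper identifies $(H^1(\Gamma),\|\cdot\|_E)$, with $\|f\|_E^2=a_L(f,f)$, as the \emph{energetic space} of $L$: it imports from the quantum-graph literature the characterization $\mathcal{D}(L)=\widetilde{H}^2(\Gamma)\cap C(\Gamma)\cap K(\Gamma)$ and the density of $\mathcal{D}(L)$ in $H^1(\Gamma)$, checks that $\|\cdot\|_E$ is equivalent to $\|\cdot\|_{H^1(\Gamma)}$, verifies via Rellich--Kondrachov that $H^1(\Gamma)\hookrightarrow L_2(\Gamma)$ is compact, and then concludes $(\dot{H}^1,\|\cdot\|_1)=(\mathcal{D}(L^{1/2}),\|\cdot\|_1)=(H^1(\Gamma),\|\cdot\|_E)$ by invoking the abstract characterization of energetic spaces. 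You instead prove the two continuous embeddings by hand in the eigenbasis: for $\dot{H}^1\hookrightarrow H^1(\Gamma)$, the identity $a_L(\phi_n,\phi_n)=\sum_{j\le n}\lambda_j c_j^2$ (which does follow from \eqref{intbypartseigenvec}, valid for test functions in $H^1(\Gamma)$ because the Kirchhoff conditions kill the vertex boundary terms against continuous functions) makes the partial sums Cauchy in the form norm, hence in $H^1(\Gamma)$ by coercivity, and completeness of $H^1(\Gamma)$ plus Parseval identify the limit with $\phi$; for the reverse embedding, your Bessel-type estimate $S_n=a_L(\phi_n,\phi)\le S_n^{1/2}a_L(\phi,\phi)^{1/2}$ gives the uniform bound $\sum_j\lambda_j c_j^2\le a_L(\phi,\phi)\le C\|\phi\|_{H^1(\Gamma)}^2$ without presupposing $\phi\in\dot H^1$, which is exactly the point that needs care and which you handle correctly. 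In effect you give the standard direct proof that the form domain of a self-adjoint operator with complete eigenbasis equals the form closure. What each approach buys: yours is self-contained and avoids the density of $\mathcal{D}(L)$ in $H^1(\Gamma)$ entirely, replacing it by $L_2$-completeness of $\{\eig_j\}$ (which the paper has already established from the discreteness of the spectrum, so there is no circularity); the paper's version is shorter on the page, connects the result to established quantum-graph and energetic-space theory, and sidesteps the eigenfunction calculus, at the cost of outsourcing the key inputs to external references.
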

\begin{proof}
To establish the identity we will show that both spaces are the same energetic space, 
in the sense of \cite{zeidler}, with respect to the operator $L$ on $L_2(\Gamma)$. Recall 
that energetic space is the closure of $\mathcal{D}(L)$ in $L_2(\Gamma)$ with respect to the energetic 
inner product norm $\|\cdot \|_E^2=(L\cdot,\cdot)_{L_2(\Gamma)}$ (this norm is also
sometimes called the energy norm), for more details see \cite[Section 5.3]{zeidler}.
We start by establishing the result for $H^1(\Gamma)$. By \cite[Section 2.3]{kostenkoetal} we have  that 
$\mathcal{D}(L) = \widetilde{H}^2(\Gamma) \cap C(\Gamma) \cap K(\Gamma)$. Therefore, $\mathcal{D}(L)\subset H^1(\Gamma)$.
Observe that the energetic norm on $\mathcal{D}(L)$ is given by
$$
\|f\|_E^2 = a_L(f,f) = (Lf,f)_{L_2(\Gamma)}	= (\kappa^2 f,f)_{L_2(\Gamma)} + \sum_{e\in\mathcal{E}}(f_e',f_e')_{L_2(e)},
$$
which, since $\kappa^2>0$, is clearly equivalent to $\|\cdot\|_{H^1(\Gamma)}$. 
Now by \cite[Theorem~2.8 and Remark 3.1]{kostenkoetal} we have that $\mathcal{D}(L)$ is 
dense in $H^1(\Gamma)$ with respect to $\|\cdot\|_{H^1(\Gamma)}$. Thus, 
$H^1(\Gamma)$ is the energetic space with respect to $L$.
We will now show that $\dot{H}^1$ is the same energetic space.
Since $\Gamma$ is a compact metric graph (thus, we have finitely many edges of finite length), 
it follows by Rellich--Kondrachov theorem, \cite[see, e.g.,][]{evans2010partial}, 
that $\widetilde{H}^1(\Gamma)$ is compactly embedded in $L_2(\Gamma)$. Therefore, 
the embedding $H^1(\Gamma) \subset L_2(\Gamma)$ is compact, so the
energetic space is compactly embedded in $L_2(\Gamma)$. Thus by 
\cite[Example 4, p. 296]{zeidler} it follows that $\mathcal{D}(L^{\nicefrac12})=H^1(\Gamma)$.
Since $\mathcal{D}(L^{\nicefrac12}) = \dot{H}^1$, the result follows.
\end{proof}

\begin{Corollary}\label{cor:identificationHalpha}
We have the identification
$\dot{H}^\alpha \cong H^\alpha(\Gamma).$
\end{Corollary}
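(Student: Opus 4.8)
The plan is to establish the remaining fractional case $0<\alpha<1$ (the endpoint $\alpha=1$ being Proposition~\ref{prp:hdot1}) in two steps: a direct spectral computation identifying the real interpolation space $(L_2(\Gamma),\dot{H}^1)_\alpha$ with the spectrally defined space $\dot{H}^\alpha$, followed by transport of this identification through the isomorphism $\dot{H}^1\cong H^1(\Gamma)$ using the functoriality of the $K$-method.

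First I would compute the $K$-functional of the couple $(\dot{H}^0,\dot{H}^1)=(L_2(\Gamma),\dot{H}^1)$ in the orthonormal eigenbasis $\{\varphi_j\}$ of $L$. Writing $\phi=\sum_j c_j\varphi_j$ with $c_j=(\phi,\varphi_j)_{L_2(\Gamma)}$ and recalling that $\|\psi\|_s^2=\sum_j\lambda_j^s(\psi,\varphi_j)^2$, every admissible decomposition $\phi=\phi_0+\phi_1$ in \eqref{eq:kfunctional} splits coordinatewise, so the infimum separates over $j$. Minimizing $a_j^2+t^2\lambda_j(c_j-a_j)^2$ in each coordinate gives
\[
K(t,\phi;\dot{H}^0,\dot{H}^1)^2=\sum_{j}\frac{t^2\lambda_j}{1+t^2\lambda_j}\,c_j^2.
\]
Substituting this into the interpolation norm from \eqref{eq:fracnorm}--\eqref{eq:interpolspace} and rescaling $s=\lambda_j^{1/2}t$ term by term yields
\[
\|\phi\|_{(\dot{H}^0,\dot{H}^1)_\alpha}^2=\left(\int_0^\infty\frac{s^{1-2\alpha}}{1+s^2}\,ds\right)\sum_j\lambda_j^\alpha c_j^2=C_\alpha\,\|\phi\|_\alpha^2,
\]
where $C_\alpha\in(0,\infty)$ precisely because the displayed integral converges for $0<\alpha<1$. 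Hence $(L_2(\Gamma),\dot{H}^1)_\alpha\cong\dot{H}^\alpha$.

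Second, I would transport this identity across Proposition~\ref{prp:hdot1}. Since $\dot{H}^0=L_2(\Gamma)$ with identical norm and $\dot{H}^1\cong H^1(\Gamma)$ with equivalent norms (after the harmless rescaling needed to arrange $\|\cdot\|_{H_0}\le\|\cdot\|_{H_1}$ in the convention of Appendix~\ref{app:interpolation}), the identity map is a couple map in both directions between $(\dot{H}^0,\dot{H}^1)$ and $(L_2(\Gamma),H^1(\Gamma))$. Applying the interpolation property of the $K$-method, Theorem~\ref{thm:interpolationpairKmethod}, in each direction shows that the two $\alpha$-interpolation spaces are isomorphic, so $(L_2(\Gamma),\dot{H}^1)_\alpha\cong(L_2(\Gamma),H^1(\Gamma))_\alpha=H^\alpha(\Gamma)$. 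Chaining the two isomorphisms gives $\dot{H}^\alpha\cong H^\alpha(\Gamma)$.

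The main obstacle is the first step. One must justify interchanging the infimum defining the $K$-functional with the sum over eigencomponents -- this is legitimate because the objective is a separable sum of strictly convex terms subject to the decoupled constraints $a_j+b_j=c_j$ -- and then verify that the scaling argument produces the spectral weight $\lambda_j^\alpha$ with a single multiplicative constant $C_\alpha$ independent of $j$, which is exactly what matches the spectral definition of $\dot{H}^\alpha$ up to equivalence of norms. The functoriality in the second step is then routine given Theorem~\ref{thm:interpolationpairKmethod} and Proposition~\ref{prp:hdot1}.
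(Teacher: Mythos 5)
Your proof is correct, and its first half takes a genuinely different route from the paper. The paper obtains the identification $\dot{H}^\alpha \cong (\dot{H}^0,\dot{H}^1)_\alpha$ by citing an abstract result (Theorem 4.36 in Lunardi's book on interpolation theory, which identifies domains of fractional powers of positive self-adjoint operators with real interpolation spaces), and then spends its effort on the transport step: it shows directly that the $K$-functionals of the couples $(L_2(\Gamma),\dot{H}^1)$ and $(L_2(\Gamma),H^1(\Gamma))$ are comparable up to the constants $\widetilde{C}=1+C$, $\breve{C}=1+C^{-1}$ coming from Proposition~\ref{prp:hdot1}, and concludes via the definitions \eqref{eq:kfunctional}--\eqref{eq:interpolspace}. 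You instead prove the spectral identification from scratch: your coordinatewise minimization is right (the optimal split gives $a_j = \tfrac{t^2\lambda_j}{1+t^2\lambda_j}c_j$, hence $K(t,\phi)^2=\sum_j \tfrac{t^2\lambda_j}{1+t^2\lambda_j}c_j^2$), the substitution $s=\lambda_j^{1/2}t$ correctly extracts the weight $\lambda_j^\alpha$ with the $j$-independent constant $C_\alpha=\int_0^\infty \tfrac{s^{1-2\alpha}}{1+s^2}\,ds$, finite exactly for $0<\alpha<1$, and your justification of the infimum--sum interchange is sound: the coordinatewise optimizers assemble into an admissible pair, since $|a_j|\le|c_j|$ gives $\phi_0\in L_2(\Gamma)$ and $\lambda_j(1+t^2\lambda_j)^{-2}\le (4t^2)^{-1}$ gives $\phi_1\in\dot{H}^1$ for each fixed $t>0$. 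Your second half (transport via the couple-map functoriality of Theorem~\ref{thm:interpolationpairKmethod}, applying the identity map in both directions) is logically the same step as the paper's hand comparison of $K$-functionals, just packaged through the interpolation-pair theorem rather than explicit constants. What your route buys is self-containedness -- the external citation is replaced by a three-line computation that also makes the restriction $0<\alpha<1$ and even the exact norm constant visible; what the paper's route buys is brevity and uniform applicability (the cited theorem covers the operator-theoretic identification without bases). You were also right to flag the rescaling needed to meet the convention $\|\cdot\|_{H_0}\le\|\cdot\|_{H_1}$ of Appendix~\ref{app:interpolation}, since $\lambda_1=\kappa^2$ may be smaller than $1$; this is a point the paper's proof passes over silently.
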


\begin{proof}
By definition we have that  
$H^\alpha(\Gamma) = (L_2(\Gamma),H^1(\Gamma))_\alpha$,
and, by \cite[Theorem~4.36]{lunardi}, we have that 
$\dot{H}^\alpha \cong (\dot{H}^0, \dot{H}^1)_\alpha  = (L_2(\Gamma), \dot{H}^1)_\alpha$.
By Proposition \ref{prp:hdot1} it follows that $\dot{H}^1 \cong H^1(\Gamma)$, which by 
Lemma~\ref{lemma:interpolationEquivalence} 
gives 
$(L_2(\Gamma),H^1(\Gamma))_\alpha \cong (L_2(\Gamma), \dot{H}^1)_\alpha$.
\end{proof}

We will now obtain an embedding for higher order spaces. To this end, begin by
recalling that $\dot{H}^\alpha$ is the Hilbert space defined by \eqref{eq:Hdot_def}.

\begin{Proposition}\label{prp:hdotk}
We have 
$\dot{H}^2 = \widetilde{H}^2(\Gamma) \cap C(\Gamma) \cap K(\Gamma),$ where
$K(\Gamma)$ is given in \eqref{eq:Kspace}, and for integer $k > 2$,
\begin{equation}\label{eq:Hkset}
	\dot{H}^k \cong \left\{f\in \widetilde{H}^k(\Gamma): D^{2\left\lfloor \nicefrac{k}{2}\right\rfloor} f \in C(\Gamma), \,\forall m = 0,\ldots, \left\lfloor\nicefrac{(k-2)}{2} \right\rfloor, D^{2m} f\in\dot{H}^2 \right\}.
\end{equation}
Furthermore, the norms $\|\cdot\|_k$ and $\|\cdot\|_{\widetilde{H}^k(\Gamma)}$ are equivalent.
\end{Proposition}

\begin{proof}
The equality of the sets $\dot{H}^2 = \widetilde{H}^2(\Gamma) \cap C(\Gamma) \cap K(\Gamma),$
follows from  \cite[Section 2.3]{kostenkoetal}. Let us now show the equality of the sets in \eqref{eq:Hkset} for $k\geq 3$.
Note that
$f\in\dot{H}^3$ if and only if $f \in\dot{H}^2$ and $Lf \in \dot{H}^1.$
Since {$Lf = \kappa^2 f - \Delta_\Gamma f = \kappa^2 f - D^2f$} and $\dot{H}^2\subset\dot{H}^1$, we have 
\begin{align*}
	f\in \dot{H}^3 &\Leftrightarrow	f\in\dot{H}^2\hbox{ and }Lf \in \dot{H}^1 \Leftrightarrow f\in \dot{H}^2\hbox{ and }D^2 f\in \dot{H}^1\\
	& \Leftrightarrow f\in\dot{H}^2, D^2f \in \widetilde{H}^1(\Gamma), D^2f\in C(\Gamma) \Leftrightarrow f\in \widetilde{H}^3(\Gamma), f\in\dot{H}^2, D^2f\in C(\Gamma).
\end{align*}
The equality of the sets for $\dot{H}^k$, $k>3$, follows similarly.

It remains to prove the equivalence of the norms of $\dot{H}^k$ 
and $\widetilde{H}^k(\Gamma)$ for functions on $\dot{H}^k$. Recall that 
$(\lambda_{i})_{i\in\mathbb{N}}$ are the eigenvalues of $L$ and let
$$
\widehat{H}^k(\Gamma) = \left\{f\in \widetilde{H}^k(\Gamma): D^{2\left\lfloor \nicefrac{k}{2}\right\rfloor} f \in C(\Gamma), \,\forall m=0,\ldots, \left\lfloor\nicefrac{(k-2)}{2} \right\rfloor, D^{2m} f\in\dot{H}^2 \right\}.
$$
Since $\lambda_{j}\to \infty$ as $j\to\infty$, we have 
that $(\dot{H}^k, \|\cdot\|_{\dot{H}^k}) 
\hookrightarrow (L_2(\Gamma) , \|\cdot\|_{L_2(\Gamma)}).$ Denote by $I$ the 
inclusion map $I:\dot{H}^k \to \widetilde{H}^k(\Gamma)$.  
If $\phi_N \to 0$ in $\dot{H}^k$, then $\phi_N\to 0$ 
in $L_2(\Gamma)$. On the other hand if $I(\phi_N) \to \phi$, then 
$\|\phi_N - \phi\|_{L_2(\Gamma)}\leq \|\phi_N-\phi\|_{\widetilde{H}^k(\Gamma)} 
\to 0$. So, $\phi = 0$, since $\phi_N \to 0$ in $L_2(\Gamma)$. By the closed 
graph theorem $I$ is a bounded operator.
Conversely, by repeating the same argument from the previous inclusion, 
we obtain that $(\widehat{H}^k(\Gamma) , \|\cdot\|_{\widetilde{H}^k(\Gamma)}) 
\hookrightarrow (\dot{H}^k, \|\cdot\|_{\dot{H}^k})$ 
from the closed graph theorem. This proves the equivalence of norms.
\end{proof}

Let $\phi\in\dot{H}^k, k\in\mathbb{N}$, and for $i\in\mathbb{N}$, let $\alpha_i = (\phi,\eig_i)_{L_2(\Gamma)}$. Then, for every $m\in\mathbb{N}$, we have
\begin{equation}\label{evenorderdiff}
D^{2m}\phi = \sum_{i\in\mathbb{N}} \alpha_i \hat{\lambda}_i^{m} \eig_i=\sum_{i\in\mathbb{N}} \alpha_i (\lambda_i-\kappa^2)^{m} \eig_i.
\end{equation}
The above expression allows us to obtain additional regularity for the even-order derivatives of functions belonging to $\dot{H}^k$.

\begin{Corollary}\label{cor:evendiffcont}
Let $\phi \in \dot{H}^k$ for $k\in\mathbb{N}$. Then, $D^{2m}\phi\in H^1(\Gamma)$ for $m=0,\ldots, \left\lfloor\nicefrac{(k-1)}{2} \right\rfloor$. In particular, $D^{2m}\phi$ is continuous.
\end{Corollary}

\begin{proof}
Since, by Proposition \ref{prp:hdot1}, $\dot{H}^1\cong H^1(\Gamma)$, it is enough to show that ${D^{2m}\phi \in \dot{H}^1}$. Now, observe that as $\phi\in\dot{H}^k$, we have 
$\phi = \sum_{i\in\mathbb{N}} \alpha_i \eig_i$, with $\alpha_i = (\phi,\eig_i)_{L_2(\Gamma)}$, and $\sum_{i\in\mathbb{N}} \alpha_i^2\lambda_i^k<\infty$. By assumption, 
$m\leq (k-1)/2$, hence $2m + 1 \leq k$, which implies that $\sum_{i\in\mathbb{N}} \alpha_i^2 \lambda_i^{2m+1}<\infty$, since $\lambda_i\to\infty$ as $i\to\infty$. On the other hand, by \eqref{evenorderdiff}, 
$D^{2m}\phi = \sum_{i\in\mathbb{N}} \alpha_i \hat{\lambda}_i^{m} \eig_i.$ Since for every $i\in\mathbb{N}$, $\hat{\lambda}_i\leq \lambda_i$, we have that $D^{2m}\phi \in \dot{H}^1$, which concludes the proof.
\end{proof}

Now, before proving Theorem \ref{regularity1}, 
note that the derivatives that we will consider are well-defined:

\begin{Remark}
	For $\alpha\geq 1$ we have that, for any $\epsilon>0$, the realizations of $u$ belong to $\dot{H}^{\alpha-\nicefrac{1}{2}-\epsilon}$. 
	By Proposition \ref{prp:hdotk}, it follows for $\epsilon<\nicefrac{1}{2}$ and any $j \in \{0,\ldots,\left\lfloor\nicefrac{(\alpha-1)}{2}\right\rfloor\}$ that $\dot{H}^{\alpha-\nicefrac{1}{2}-\epsilon}\subset \dot{H}^{2j} \subset \widetilde{H}^{2j}(\Gamma)$ and, therefore, $D^{2j}u$ is well-defined. By an analogous argument, we have that $D^{\lfloor\alpha\rfloor}u$ is well-defined if $\lfloor\alpha\rfloor$ is even and $\alpha - \lfloor\alpha\rfloor > \nicefrac{1}{2}$.
\end{Remark}

\begin{Remark}
For $\alpha\geq 1$ and $j=0,\ldots, \left\lfloor\nicefrac{(\alpha-1)}{2}\right\rfloor$, we have that, for any $\epsilon>0$, the realizations of $u$ belong to $\dot{H}^{\alpha-\nicefrac{1}{2}-\epsilon}$. Let $j\leq \nicefrac{\alpha}{2} - \nicefrac{1}{2}$, which implies that $2j+1\leq \alpha$. Now, take $0 <\epsilon < \nicefrac{1}{2}$ so that ${2j \leq 2j + \nicefrac{1}{2} - \epsilon \leq \alpha - \nicefrac{1}{2} -\epsilon}$. Then, by Proposition \ref{prp:hdotk}, $\dot{H}^{\alpha-\nicefrac{1}{2}-\epsilon}\subset \dot{H}^{2j} \subset \widetilde{H}^{2j}(\Gamma)$ and, therefore, $D^{2j}u$ is well-defined for $j=0,\ldots,\left\lfloor\nicefrac{(\alpha-1)}{2}\right\rfloor$. By an analogous argument, we have that $D^{\lfloor\alpha\rfloor}u$ is well-defined if $\lfloor\alpha\rfloor$ is even and $\alpha - \lfloor\alpha\rfloor > \nicefrac{1}{2}$.
\end{Remark}

\begin{proof}[Proof of Theorem \ref{regularity1}]
First observe that by \cite[][Remark 2.4]{BKK2020}, for any $\epsilon>0$, $u\in L_2(\Omega; \dot{H}^{\alpha-\nicefrac{1}{2}-\epsilon})$, so that $u$ has sample paths in $\dot{H}^{\alpha-\frac{1}{2}-\epsilon}$. For $\alpha > \nicefrac{3}{2}$, we have directly that for sufficiently small $\epsilon>0$, $\dot{H}^{\alpha - \nicefrac{1}{2}-\epsilon}\subset \dot{H}^1$. By Proposition~\ref{prp:hdot1}, we have $\dot{H}^1\subset H^1(\Gamma)$. So, $\mathbb{P}$-a.s., $u$ belongs to $H^1(\Gamma)$. 
Furthermore, by Proposition~\ref{prp:hdotk}, 
$\dot{H}^{\alpha - \nicefrac{1}{2}-\epsilon} \subset 
\dot{H}^{\alpha^\ast}\subset \widetilde{H}^{\alpha^\ast}(\Gamma)$. 
This shows that for $j=1,\ldots,\alpha^\ast$, $\mathbb{P}$-a.s., 
$D^ju$ exists. 
The statement for odd-order derivatives follows directly from Proposition \ref{prp:hdotk}.

Finally, recall that $\mathcal{W} = \sum_{i\in\mathbb{N}} \xi_i \eig_i$ $\mathbb{P}$-a.s., where $\{\xi_i\}_{i\in\mathbb{N}}$ are independent standard Gaussian random variables on $(\Omega, \mathcal{F},\mathbb{P})$. By \eqref{eq:solspde}, we therefore have that ${(u,\eig_i)_{L_2(\Gamma)} = \lambda_i^{-\nicefrac{\alpha}{2}} \xi_i}$. Hence, if $\nicefrac12 < \alpha \leq \nicefrac32$, we have by \eqref{eq:weyl}, that there exist constants $K_1,K_2>0$, that do not depend on $i$, such that
$$
\|u\|_{1}^2 = \sum_{i\in\mathbb{N}} \lambda_i^{-\alpha+1} \xi_i^2 \geq K_1 \sum_{i\in\mathbb{N}} \lambda_i^{-\nicefrac12} \xi_i^2 \geq K_2 \sum_{i\in\mathbb{N}} i^{-1} \xi_i^2  = K_2 \sum_{i\in\mathbb{N}} \frac{1}{i} + 
K_2 \sum_{i\in\mathbb{N}} \frac{\xi_i^2-1}{i}.
$$
Since $\V\left[\sum_{i\in\mathbb{N}} \nicefrac{(\xi_i^2-1)}{i} 
\right]< \infty$, the second term on the right is finite $\mathbb{P}$-a.s. and it therefore follows that  $\|u\|_1 = \infty$ $\mathbb{P}$-a.s.
\end{proof}

Due to the complicated geometry of $\Gamma$ we need the following version of the  
Kolmogorov--Chentsov theorem to prove Theorem~\ref{thm:weakregularity}.

\begin{Theorem}\label{kolmchent}
Let $u$ be a random field on a compact metric graph $\Gamma$, and $M,p>0$ and $q>1$ be such~that
$$\pE(|u(x)-u(y)|^p) \leq M d(x,y)^q,\quad x,y\in\Gamma.$$
Then, for any $\beta \in (0,(q-1)/p)$, $u$ has a modification with $\beta$-H\"older continuous sample paths.
\end{Theorem}
\begin{proof}
Our goal is to apply \cite[Theorem 1.1]{kratschmerurusov}. To this end, we first observe 
that since $\Gamma$ is compact, it is totally bounded. Now, let $N(\Gamma,\eta)$ be the 
minimal number to cover $\Gamma$ with balls of radius $\eta>0$. Let $g$ be the maximum 
degree of the vertices in $\Gamma$. It is easy to see that we have
$$
N(\Gamma,\eta) \leq \frac{2\hbox{diam}(\Gamma) g}{\eta},
$$
where $\hbox{diam}(\Gamma) =  \sup\{d(x,y): x,y\in\Gamma\}$ is the diameter of the graph 
$\Gamma$.
This shows that we can apply \cite[Theorem 1.1]{kratschmerurusov} to any compact graph 
$\Gamma$, and that we can take $t=1$ in their statement. Therefore, if $u$ is a random 
field such that 
$\pE(|u(x)-u(y)|^p) \leq M d(x,y)^q$ for any $x,y\in\Gamma$, 
then, for any ${\beta \in (0,(q-1)/p)}$, $(u_x)_{x\in\Gamma}$ has a modification with 
$\beta$-H\"older continuous sample paths.
\end{proof}

Our goal now is to prove the Sobolev embedding for compact metric graphs. To this end, let us begin by 
introducing some notation. Given $x,y\in\Gamma$, let $[x,y]\subset \Gamma$ be any path connecting $x$ and $y$ with shortest length and for a function $u:\Gamma\to\mathbb{R}$, we denote by $u|_{[x,y]}$ the 
restriction of $u$ to the path $[x,y]$. 

We begin by showing that restrictions of functions in the Sobolev space $H^1(\Gamma)$ to a path $[x,y]$ belong to the Sobolev space $H^1([x,y])$, which is isometrically isomorphic to $H^1(I_{x,y})$, where ${I_{x,y} = \left[0,d(x,y)\right]}$ is an interval.

\begin{Proposition}\label{prp:restSobSpace}
Let $\Gamma$ be a compact metric graph. Let, also, $x,y\in\Gamma$, $x\neq y$. If $u\in H^1(\Gamma)$, then we have that $u|_{[x,y]}\in H^1([x,y])$.
\end{Proposition}

\begin{proof}
Let $[x,y] = p_1\cup p_2\cup\cdots\cup p_N$, for some $N\in\mathbb{N}$, where $p_i\subseteq e_i$, with $e_i\in\mathcal{E}$ and $e_i\neq e_j$ if $i\neq j$. Furthermore, let the paths be ordered in such a way that $p_1=[x,v_1]$, $p_N = [v_{N-1},y]$, ${p_i\cap p_{i+1} = \{v_i\}}$ and $p_i\cap p_j=\emptyset$ if $|i-j|>1$. Let $v_0 = x$ and $v_{N} = y$. By the definition of $H^1(\Gamma)$, we have that ${u|_{p_i} \in H^1(p_i)}$ for every $i$. Thus, by \eqref{eq:sobabscont}, 
we have that for every $z\in p_i$,
$u(z) = u(v_{i-1}) + \int_{[v_{i-1},z]} u'(t) dt.$
So, we have for each $z\in p_i$,
$$
u(z) = u(v_{i-1}) + \int_{[v_{i-1},z]} u'(s) ds = u(x) + \sum_{j=0}^{i-2} \int_{[v_j,v_{j+1}]} u'(s)ds + \int_{[v_{i-1},z]} u'(s) ds,
$$
by the continuity requirement of $u\in H^1(\Gamma)$. 
It then follows that
$u(z) = u(x) + \int_{[x,z]} u'(s)ds$
for every $z\in [x,y]$.
Therefore, by \eqref{eq:sobabscont}, 
$u\in H^1([x,y])$.
\end{proof}

We will now show that the restriction map $u\mapsto u|_{[x,y]}$ is a bounded linear operator from $H^\alpha(\Gamma)$ to $H^\alpha([x,y]) = (L_2([x,y]), H^1([x,y]))_\alpha$:

\begin{Proposition}\label{prp:restrboundedFrac}
Let $\Gamma$ be a compact metric graph, $x,y\in\Gamma$, $u\in L_2(\Gamma)$ and given a shortest path $[x,y]$, define ${T^{x,y}(u) = u|_{[x,y]}}$. Then, for $0<\alpha \leq 1$, we have that 
$T^{x,y}|_{H^\alpha(\Gamma)}$ is a bounded linear operator from $H^\alpha(\Gamma)$ to $H^\alpha([x,y])$.
\end{Proposition}

\begin{proof}
In view of Theorem \ref{thm:interpolationpairKmethod}, 
it is enough to show that the operator 
$T^{x,y}:L_2(\Gamma)\to L_2([x,y])$ is a couple map from $(L_2(\Gamma),H^1(\Gamma))$ to $(L_2([x,y]),H^1([x,y]))$.
We directly have  
${T^{x,y}(L_2(\Gamma)) \subset L_2([x,y])}$, and for every $u\in L_2(\Gamma)$, $\|u\|_{L_2([x,y])}\leq \|u\|_{L_2(\Gamma)}$, so that $T^{x,y}:L_2(\Gamma)\to L_2([x,y])$ is bounded. By Proposition~\ref{prp:restSobSpace}, $T^{x,y}(H^1(\Gamma))\subset H^1([x,y])$. It is clear that $\|u\|_{H^1([x,y])}\leq \|u\|_{H^1(\Gamma)}$ for every $u\in H^1(\Gamma)$, so that $T^{x,y}|_{H^1(\Gamma)}\to H^1([x,y])$ is bounded. Therefore, $T^{x,y}$ is a couple map from $(L_2(\Gamma),H^1(\Gamma))$ to $(L_2([x,y]),H^1([x,y]))$. The result thus follows from 
Theorem \ref{thm:interpolationpairKmethod}.
\end{proof}

\begin{proof}[Proof of Theorem \ref{thm:sobembedding}]
The proof is easier if $\Gamma$ has the following property: for any $x,y\in\Gamma$, there are vertices $v$ and $\widetilde{v}$ such that either $x,y$ belong to a single shortest path $[v,\widetilde{v}]$, or $x$ and $y$ belong to two different shortest paths connecting $v$ and $\widetilde{v}$. Observe that if we do not have loops, this is always true. However, if we have loops, this might not be true. Therefore, we will modify $\Gamma$ by adding vertices of degree 2 into a new metric graph $\widetilde{\Gamma}$, in such a way that $\widetilde{\Gamma}$ has this property.
Let $\mathcal{L}$ be the set of loops in $\Gamma$, where each $l\in\mathcal{L}$ consists of the edges and vertices of the loop. If $\mathcal{L}$ is empty, set $\widetilde{\Gamma} = \Gamma$. Otherwise, given a loop $l\in\mathcal{L}$, take a vertex $v$ from $l$. Now, let $p$ be the unique point in an edge in $L$ such that there exist two shortest paths from $v$ to $p$. If $p$ is already a vertex of $l$, there is nothing to do, otherwise, we can add $p$ to the graph as a vertex of degree 2. We repeat it for all the loops. Let $\widetilde{\Gamma}$ be the resulting graph. Now, by Proposition \ref{prop:join}, $H^1(\Gamma) = H^1(\widetilde{\Gamma})$. Thus, for $0\leq \alpha\leq 1$, $H^\alpha(\Gamma) = H^\alpha(\widetilde{\Gamma})$. Therefore, there is no loss in generality in taking $\Gamma$ as $\widetilde{\Gamma}$. 

Now, given $x,y\in\Gamma$, let $\mathcal{P}_{x,y}$ be the set containing the shortest paths connecting $x$ to $y$. Since $\Gamma$ is compact, $\mathcal{P}_{x,y}$ is finite. Take any $[x,y]\in \mathcal{P}_{x,y}$, and let 
$I_H^{x,y}: H^\alpha([x,y])\to C^{0,\alpha-\nicefrac{1}{2}}([x,y])$
be the Sobolev embedding on $[x,y]$, which, by Corollary \ref{cor:SobembedInterpFracSob}, 
is a bounded linear operator. Let, now, $T^{x,y}$ be the restriction operator, $T^{x,y}(u) = u|_{[x,y]}$, where $u\in L_2(\Gamma)$. By Proposition \ref{prp:restrboundedFrac}, we have that the composition $I_H^{x,y} \circ T^{x,y}$ is a bounded linear map from $H^\alpha(\Gamma)$ to $C^{0,\alpha-\nicefrac{1}{2}}([x,y])$. In particular, there exists $0< C_{x,y}<\infty$ such that for every $u\in H^\alpha(\Gamma)$,
\begin{equation}\label{eq:holderboundpath}
	\|u\|_{C^{0,\alpha-\frac{1}{2}}([x,y])} = \sup_{s\in [x,y]} |u(s)| + \sup_{\substack{s,s'\in [x,y]\\ s\neq s'}} \frac{|u(s)-u(s')|}{d(s,s')^{\alpha-\frac{1}{2}}} \leq C_{x,y} \|u\|_{H^\alpha(\Gamma)},
\end{equation}
where $C_{x,y}$ only depends on the length of the path and not on the particular path. 	
Now, note that for every $x,y\in\Gamma$, $x\neq y$, there exist $v,\widetilde{v}\in\mathcal{V}$ such that $[x,y] \subset \bigcup_{[v,\widetilde{v}]\in\mathcal{P}_{v,\widetilde{v}}} [v,\widetilde{v}]$. If, $[x,y]\subset [v,\widetilde{v}]$ for some vertices $v$ and $\widetilde{v}$, then, we directly have by \eqref{eq:holderboundpath} that
$$\frac{|u(x)-u(y)|}{d(x,y)^{\alpha-\frac{1}{2}}} \leq C_{v,\widetilde{v}}  \|u\|_{H^\alpha(\Gamma)}.$$	
On the other hand, if $x$ and $y$ belong to different shortest paths connecting $v$ to $\widetilde{v}$, then by our construction above, there is a vertex $w\in\{v,\widetilde{v}\}$ such that $d(x,w)\leq d(x,y)$ and $d(w,y)\leq d(x,y)$. Furthermore, 
\begin{equation}\label{eq:ineqSobEmbed}
	\frac{|u(x)-u(y)|}{d(x,y)^{\alpha-1/2}} \leq \frac{|u(x)-u(w)|}{d(x,y)^{\alpha-1/2}} + \frac{|u(w)-u(y)|}{d(x,y)^{\alpha-1/2}}  \leq \frac{|u(x)-u(w)|}{d(x,w)^{\alpha-1/2}} + \frac{|u(w)-u(y)|}{d(w,y)^{\alpha-1/2}}.
\end{equation}
Thus, by \eqref{eq:holderboundpath} and \eqref{eq:ineqSobEmbed},
$$
\frac{|u(x)-u(y)|}{d(x,y)^{\alpha-\frac{1}{2}}} \leq  2C_{v,\widetilde{v}}  \|u\|_{H^\alpha(\Gamma)} \leq  \|u\|_{H^\alpha(\Gamma)} \sum_{w\neq\widetilde{w}\in\mathcal{V}} 2C_{w,\widetilde{w}}.
$$
Since $\Gamma$ is compact, the set $\mathcal{V}$ is finite, so that
$C :=  2 \sum_{w\neq\widetilde{w}\in\mathcal{V}} C_{w,\widetilde{w}} < \infty.$
Therefore,
$$
[u]_{C^{0,\alpha-\frac{1}{2}}(\Gamma)} = \sup_{\substack{x,y\in\Gamma\\ x\neq y}} \frac{|u(x)-u(y)|}{d(x,y)^{\alpha-\frac{1}{2}}} \leq C \|u\|_{H^\alpha(\Gamma)}.
$$
Similarly, $\|u\|_{C(\Gamma)} \leq C \|u\|_{H^\alpha(\Gamma)}$ and we hence have that 
$\|u\|_{C^{0,\alpha-\nicefrac{1}{2}}}(\Gamma) \leq 2C \|u\|_{H^\alpha(\Gamma)}.$
\end{proof}

\begin{proof}[Proof of Corollary~\ref{cor:sobembeddingHdot} ]
Note that if $\alpha>1$, then $\dot{H}^\alpha\hookrightarrow \dot{H}^1$, so it is enough to consider ${\nicefrac12<\alpha\leq 1}$. The result thus follows directly from Theorem \ref{thm:sobembedding}, Proposition \ref{prp:hdot1} and Corollary \ref{cor:identificationHalpha}.
\end{proof}

We are now in a position to start to prove the more refined regularity statement. For this part, our ideas are inspired by the regularity results in \cite{coxkirchner}.
We begin with the following key result, which allows us to obtain a continuous process which we will show  solves \eqref{eq:spde}.

\begin{Lemma}\label{kolmchentbounds}
Fix $\alpha > \nicefrac12$ and let $\widetilde{\alpha} = \alpha-\nicefrac12$ if $\alpha\leq 1$ and $\widetilde{\alpha}=\nicefrac12$ if $\alpha>1$. For a location $s\in\Gamma$, define $u_0(s) = \mathcal{W}\left((\tau^{-1}L^{-\nicefrac{\alpha}{2}})^\ast(\delta_s)\right)$, where $\delta_s$ is the Dirac measure concentrated at $s\in\Gamma$ . Then,
\begin{equation}\label{holdercontvar}
	\pE\left(|u_0(x) - u_0(y)|^2\right) \leq \|\tau^{-1}L^{-\nicefrac{\alpha}{2}}\|_{\mathcal{L}(L_2(\Gamma),C^{0,\widetilde{\alpha}}(\Gamma))}^2 d(x,y)^{2\widetilde{\alpha}}.    
\end{equation}
Furthermore, for any $0<\gamma<\widetilde{\alpha}$, $u_0$ has a $\gamma$-H\"older continuous modification. 
\end{Lemma}
\begin{proof}
For $\alpha > \nicefrac12$, we have by Theorem \ref{thm:sobembedding}, that
$
L^{-\nicefrac{\alpha}{2}}:L_2(\Gamma)\to \dot{H}^{\alpha} \cong H^\alpha(\Gamma) \hookrightarrow C^{0,\widetilde{\alpha}}(\Gamma),
$
where $\widetilde{\alpha} = \alpha - \nicefrac12$ if $\alpha \leq 1$ and $\widetilde{\alpha}=\nicefrac12$ if $\alpha>1$.
Therefore, ${L^{-\nicefrac{\alpha}{2}}:L_2(\Gamma)\to C^{0,\widetilde{\alpha}}(\Gamma)}$ is a bounded linear operator. Let $(L^{-\nicefrac{\alpha}{2}})^\ast: \big(C^{0,\widetilde{\alpha}}(\Gamma) \big)^\ast \to \bigl( L_2(\Gamma)\bigr)^\ast = L_2(\Gamma)$
be its adjoint. Define, for $f\in C^{0,\widetilde{\alpha}}(\Gamma)$, ${\<\delta_x, f\> = \int f d \delta_x = f(x)}$. Then $|\<\delta_x,f\>| = |f(x)| \leq \|f\|_{C^{0,\widetilde{\alpha}}(\Gamma)}$, and therefore  $\delta_x \in \bigl(C^{0,\widetilde{\alpha}}(\Gamma) \bigr)^\ast$. Furthermore, $(L^{-\nicefrac{\alpha}{2}})^\ast(\delta_x) \in L_2(\Gamma)$. This tells us that 
$\mathcal{W}\big((L^{-\nicefrac{\alpha}{2}})^\ast(\delta_x)\big)$
is well-defined.

We now use linearity of $\mathcal{W}$, the isometry \eqref{isometryW}, and the considerations above to obtain
\begin{align*}
	\big(\pE\big( | u_0(x) -& u_0(y)|^2 \big)\big)^{\nicefrac12} = \big(\pE\big(\big|\mathcal{W}\big((L^{-\nicefrac{\alpha}{2}}\big)^\ast(\delta_x-\delta_y)\big)\big|^2\big)\big)^{\nicefrac12}
	= \|(L^{-\nicefrac{\alpha}{2}})^\ast(\delta_x-\delta_y)\|_{L_2(\Gamma)} \\
	&\leq \|(L^{-\nicefrac{\alpha}{2}})^\ast\|_{\mathcal{L}\left(\left(C^{0,\widetilde{\alpha}}(\Gamma)\right)^\ast, L_2(\Gamma)\right)} \|\delta_x-\delta_y\|_{\left(C^{0,\widetilde{\alpha}}(\Gamma)\right)^\ast}
	\leq \|L^{-\nicefrac{\alpha}{2}}\|_{\mathcal{L}(L_2(\Gamma),C^{0,\widetilde{\alpha}}(\Gamma))} d(x,y)^{\widetilde{\alpha}},
\end{align*}
where, in the last inequality, we used that, for $f\in C^{0,\widetilde{\alpha}}(\Gamma)$,
$$|\<\delta_x-\delta_y,f\>| = |f(x)-f(y)| \leq \|f\|_{C^{0,\widetilde{\alpha}}(\Gamma)} d(x,y)^{\widetilde{\alpha}},$$
which implies that $\|\delta_x-\delta_y\|_{\left(C^{0,\widetilde{\alpha}}(\Gamma)\right)^\ast}\leq d(x,y)^{\widetilde{\alpha}}$.
This gives us \eqref{holdercontvar}. 

Now, observe that we have that 
$u_0(x)-u_0(y) = \mathcal{W}\bigl((L^{-\nicefrac{\alpha}{2}})^\ast(\delta_x-\delta_y)\bigr)$ 
by the linearity of $\mathcal{W}$.
Therefore, $u_0(x)-u_0(y)$ is a Gaussian random variable with the variance bounded from above by $\|L^{-\nicefrac{\alpha}{2}}\|_{\mathcal{L}(L_2(\Gamma),C^{0,\widetilde{\alpha}}(\Gamma))}^2 d(x,y)^{2\widetilde{\alpha}}$. Thus, for any $p>2$, 
$$\pE\left(|u_0(x) - u_0(y)|^p\right) \leq \pE(|U|^p) \|L^{-\nicefrac{\alpha}{2}}\|_{\mathcal{L}(L_2(\Gamma),C^{0,\widetilde{\alpha}}(\Gamma))}^p d(x,y)^{\widetilde{\alpha}p},$$
where $U$ follows a standard normal distribution. The proof is completed by applying the above version of the Kolmogorov--Chentsov theorem (Theorem \ref{kolmchent}) and standard arguments, since $p>2$ is arbitrary, to obtain the existence of a $\gamma$-H\"older continuous modification for any $0<\gamma<\widetilde{\alpha}$.
\end{proof}

We will now show that for any $0<\gamma<\widetilde{\alpha}$, a $\gamma$-H\"older continuous version of $u_0$ solves \eqref{eq:spde}:

\begin{Lemma}\label{weakregularity}
Fix $\alpha > \nicefrac12$ and let $\widetilde{\alpha} = \alpha-\nicefrac12$ if $\alpha\leq 1$ and $\widetilde{\alpha}=\nicefrac12$ if $\alpha>1$. Fix, also, $0<\gamma<\widetilde{\alpha}$ and let $u$ be any $\gamma$-H\"older continuous modification of $u_0$, where ${u_0(x) = \mathcal{W}\left((\tau^{-1}L^{-\nicefrac{\alpha}{2}})^\ast \delta_x\right)}$. Then, for all $\phi\in L_2(\Gamma)$,
$(u,\phi)_{L_2(\Gamma)} = \mathcal{W}(\tau^{-1}L^{-\nicefrac{\alpha}{2}}\phi).$
\end{Lemma}

\begin{proof}
	Since the set of half-open paths is a semi-ring that generates the Borel sets in $\Gamma$, the simple functions obtained by the above indicator functions are dense in $L_2(\Gamma)$, and thus we first show the results for simple function, and then for any function in $L_2(\Gamma)$. 
Recall that given $x,y\in\Gamma$,  $[x,y]\subset \Gamma$ is the path connecting $x$ and $y$ with shortest length, and let $(x,y] = [x,y]\setminus\{x\}$.
We will begin by considering $x,y\in e$, for some $e\in\mathcal{E}$ and showing that
$\int_{(x,y]} u(s) ds = \mathcal{W}(L^{-\nicefrac{\alpha}{2}}1_{(x,y]}).$
Observe that
\begin{equation}
	\int_{(x,y]} u(s) ds = \int_{[x,y]} u(s)ds.
\end{equation}
Therefore, we can take the closed path $[x,y]$. Since $u$ has continuous sample paths, the integral
$ \int_{[x,y]} u(s)ds$ is the limit of Riemann sums. Specifically, let $x=x_0,\ldots,x_n=y$ be a partition of $[x,y]$, and let $x_i^\ast$ be any point in $[x_i,x_{i+1}]$, $i=0,\ldots,n-1$, where $\max_i l([x_i,x_{i+1}])\to 0$ as $n\to\infty$. Then, 
\begin{align*}
	\int_{[x,y]} u(s)ds &= \lim_{n\to\infty} \sum_{i=0}^{n-1} u(x_i^\ast) l([x_i,x_{i+1}])
	= \lim_{n\to\infty} \sum_{i=0}^{n-1} \mathcal{W}\left((L^{-\nicefrac{\alpha}{2}})^\ast \delta_{x_i^\ast}\right) l([x_i,x_{i+1}])\\
	&= \lim_{n\to\infty} \mathcal{W}\Big((L^{-\nicefrac{\alpha}{2}})^\ast\Big(\sum_{i=0}^{n-1} \delta_{x_i^\ast} l([x_i,x_{i+1}]) \Big) \Big),
\end{align*}
by the linearity of $\mathcal{W}$.
Observe that $C^{0,\widetilde{\alpha}}(\Gamma)\hookrightarrow C^{0,\gamma}(\Gamma)$, since $[f]_{C^{0,\gamma}(\Gamma)} \leq \hbox{diam}(\Gamma)^{\widetilde{\alpha}-\gamma}[f]_{C^{0,\widetilde{\alpha}}(\Gamma)}$. 
Let us now study the convergence of $\sum_{i=0}^{n-1} \delta_{x_i^\ast} l([x_i,x_{i+1}])$ in $\big(C^{0,\gamma}(\Gamma)\bigr)^\ast$. We have, for any ${f\in C(\Gamma)}$
\begin{align*}
	\Bigl| \<1_{[x,y]},f\> - & \Big\< \sum_{i=0}^{n-1} \delta_{x_i^\ast} l([x_i,x_{i+1}]) ,f\Big\> \Bigr| \leq \sum_{i=0}^{n-1} \int_{[x_i,x_{i+1}]} |f(s) - f(x_i^\ast)| ds\\
	&\leq \|f\|_{C^{0,\gamma}(\Gamma)} \sum_{i=0}^{n-1} \int_{[x_i,x_{i+1}]} d(x_i,x_{i+1})^\gamma ds
	\leq \|f\|_{C^{0,\gamma}(\Gamma)} d(x,y) \max_{1\leq i\leq n} d(x_i,x_{i+1})^\gamma. 
\end{align*}
Therefore, since $\max_{1\leq i\leq n} l([x_i,x_{i+1}])\to 0$ as $n\to\infty$, it follows that
$$
\lim_{n \rightarrow \infty}	\Bigl\|1_{[x,y]} - \sum_{i=0}^{n-1} \delta_{x_i^\ast} l([x_i,x_{i+1}]) \Bigr\|_{\left(C^{0,\gamma}(\Gamma)\right)^\ast} \leq \lim_{n \rightarrow \infty}	d(x,y) \max_{1\leq i\leq n} d(x_i,x_{i+1})^\gamma \to 0.
$$
Furthermore, since 
$(L^{-\nicefrac{\alpha}{2}})^\ast: (C^{0,\gamma}(\Gamma))^\ast\to L_2(\Gamma)$ is a bounded operator, we have that, as   $n \rightarrow\infty$, 
$
(L^{-\nicefrac{\alpha}{2}})^\ast \Big( \sum_{i=0}^{n-1} \delta_{x_i^\ast} l([x_i,x_{i+1}])\Big) \to (L^{-\nicefrac{\alpha}{2}})^\ast 1_{[x,y]}
$
in $L_2(\Gamma)$. Finally, by \eqref{isometryW}, we have that
$$
\lim_{n\to\infty} \mathcal{W}\Big((L^{-\nicefrac{\alpha}{2}})^\ast\Big(\sum_{i=0}^{n-1} \delta_{x_i^\ast} l([x_i,x_{i+1}]) \Big) \Big) = \mathcal{W}((L^{-\nicefrac{\alpha}{2}})^\ast 1_{[x,y]}) = \mathcal{W}(L^{-\nicefrac{\alpha}{2}}1_{[x,y]}).
$$
	This proves that
	$\int_{[x,y]} u(s) ds = \mathcal{W}(L^{-\nicefrac{\alpha}{2}}1_{[x,y]}).$ Hence, by linearity, the result follows for simple functions.

We now move to the general case.
Let $\phi\in L_2(\Gamma)$ be any function and $\phi_n$ be a sequence of simple functions such that $\phi_n\to \phi$ in $L_2(\Gamma)$. Then,
\begin{equation*}
	\pE\Bigl(\Bigl| (u,\phi)_{L_2(\Gamma)}  - \mathcal{W}(L^{-\nicefrac{\alpha}{2}}\phi) \Bigr|^2\Bigr)^{\nicefrac12} \leq \pE\Bigl(\Bigl|(u,\phi)_{L_2(\Gamma)} - (u,\phi_n)_{L_2(\Gamma)} \Bigr|^2\Bigr)^{\nicefrac12}
	+ \pE\Bigl(\Bigl|\mathcal{W}(L^{-\nicefrac{\alpha}{2}}(\phi_n - \phi)) \Bigr|^2\Bigr)^{\nicefrac12}.
\end{equation*}
Observe that
$\pE\big(\big|(u,\phi)_{L_2(\Gamma)} - (u,\phi_n)_{L_2(\Gamma)} \big|^2\big)^{\nicefrac12}\to 0$
as $n\to\infty$. Indeed, since $u$ is a modification of $u_0$, we have that, for every
$x\in \Gamma$,
\begin{equation}\label{eq:expsquareineq}
	\begin{aligned}
		\pE\left(|u(x)|^2 \right)^{\nicefrac12} &= \pE\left(|u_0(x)|^2\right)^{\nicefrac12} = \left\|(L^{-\nicefrac{\alpha}{2}})^\ast(\delta_x) \right\|_{L_2(\Gamma)}
		\leq \left\|(L^{-\nicefrac{\alpha}{2}})^\ast \right\|_{\mathcal{L}(C^{0,\widetilde{\alpha}}(\Gamma)^\ast,L_2(\Gamma))} \|\delta_x \|_{C^{0,\widetilde{\alpha}}(\Gamma)^\ast}\\
		& = \left\|(L^{-\nicefrac{\alpha}{2}})^\ast \right\|_{\mathcal{L}(L_2(\Gamma),C^{0,\widetilde{\alpha}}(\Gamma))} \| \delta_x\|_{C^{0,\widetilde{\alpha}}(\Gamma)^\ast}.
	\end{aligned}
\end{equation}
Finally since $|\langle \delta_x , f\rangle | = |f(x)| \leq \|f\|_{C^{0,\widetilde{\alpha}}(\Gamma)}$ it follows that
$\|\delta_x\|_{C^{0,\widetilde{\alpha}}(\Gamma)^\ast} \leq 1$, and hence 
\begin{align}
	\left\|(L^{-\nicefrac{\alpha}{2}})^\ast \right\|_{\mathcal{L}(L_2(\Gamma),C^{0,\widetilde{\alpha}}(\Gamma))} \| \delta_x\|_{C^{0,\widetilde{\alpha}}(\Gamma)^\ast} \leq\left\|(L^{-\nicefrac{\alpha}{2}})^\ast \right\|_{\mathcal{L}(L_2(\Gamma),C^{0,\widetilde{\alpha}}(\Gamma))}.\label{eq:ineqvarmod} 
\end{align}
Therefore,
by \eqref{eq:expsquareineq}, \eqref{eq:ineqvarmod} and Fubini's theorem, we have that
\begin{align}
	\pE\left[\|u\|_{L_2(\Gamma)}^2 \right] = \pE\Bigl[\sum_{e\in\mathcal{E}} \int_0^{\ell_e} u_e^2(t)dt \Bigr]
	= \sum_{e\in\mathcal{E}} \int_{0}^{\ell_e} \pE(u_e^2(t)) dt \leq |\Gamma| \|L^{-\nicefrac{\alpha}{2}}\|_{\mathcal{L}(L_2(\Gamma), C^{0,\widetilde{\alpha}}(\Gamma))}^2.\label{eq:ineqvarmod_part2}
\end{align}	
Now, \eqref{eq:ineqvarmod_part2} and the fact that $\phi_n\to \phi$ in $L_2(\Gamma)$, 
imply 
{$\pE\big(\big|(u,\phi)_{L_2(\Gamma)} - (u,\phi_n)_{L_2(\Gamma)} \big|^2\big)^{\nicefrac12}\to 0$}
.
On the other hand, since $L^{-\nicefrac{\alpha}{2}}:L_2(\Gamma) \to L_2(\Gamma)$ is bounded, we have that $L^{-\nicefrac{\alpha}{2}}(\phi_n - \phi)\to 0$ in $L_2(\Gamma)$ and by \eqref{isometryW}, we obtain
$\pE\big(\big|\mathcal{W}(L^{-\nicefrac{\alpha}{2}}(\phi_n - \phi)) \big|^2\big)^{\nicefrac12}\to 0$
as $n\to\infty$.
Thus, $(u,\phi)_{L_2(\Gamma)} = \mathcal{W}(L^{-\nicefrac{\alpha}{2}}\phi)$ follows.
\end{proof}

\begin{proof}[Proof of Theorem \ref{thm:weakregularity}]
Lemma \ref{kolmchentbounds} and Lemma \ref{weakregularity} directly imply that $u$ is a solution of $L^{\nicefrac{\alpha}{2}} u = \mathcal{W}$. Therefore, for any $0<\gamma<\nicefrac{1}{2}$, $u$ has a modification with $\gamma$-H\"older continuous sample paths.
Now, if $\alpha = 3$, then
$L^{\nicefrac32} u = \mathcal{W}$ if and only if $v = Lu = L^{-\nicefrac12}\mathcal{W}.$
This implies that $v$ solves $L^{\nicefrac12}v = \mathcal{W}$ and by Lemma~\ref{weakregularity}, $v$ has $\gamma$-H\"older continuous sample paths. 
Finally, note that
$v = Lu = \kappa^2 u - u''.$
By using the first part of this proof directly to $u$, we also have that $u$ has a modification with $\gamma$-H\"older continuous sample paths. Therefore, also $D^2 u = u'' = v + \kappa^2 u$  has a modification with $\gamma$-H\"older continuous sample paths. The general case can be handled similarly.
\end{proof}

\section{Spectral representations and simulation}\label{sec:spectralrepr}
In this section, we will further characterize the covariance function of the Whittle--Mat\'ern fields and show how they can be simulated through Karhunen--Lo\`eve expansions. 
We directly have:

\begin{Corollary}\label{cor:covfunccont}
Fix $\alpha > \nicefrac{1}{2}$, then the covariance function $\varrho(\cdot,\cdot)$ in \eqref{eq:covfunc} is  continuous.
\end{Corollary}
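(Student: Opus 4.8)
The plan is to deduce continuity of $\varrho$ from the sample path continuity already established in Theorem~\ref{thm:weakregularity}, by upgrading the almost sure convergence of sample paths to convergence in $L_2(\Omega)$ and then recognising $\varrho$ as the jointly continuous Gram function of a continuous $L_2(\Omega)$-valued curve.

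First I would fix, for the given $\alpha>\nicefrac12$, the modification $\tilde u$ of $u$ with $\gamma$-H\"older (in particular continuous) sample paths furnished by Theorem~\ref{thm:weakregularity}. Passing to a modification does not change the covariance, so it suffices to prove that $(s,s')\mapsto \pE[\tilde u(s)\tilde u(s')]$ is continuous on $\Gamma\times\Gamma$ and agrees a.e.\ with $\varrho$; the latter is immediate from \eqref{eq:covfunc}. The key reduction is to show that the map $\iota:\Gamma\to L_2(\Omega)$, $\iota(s)=\tilde u(s)$, is continuous, since then $\pE[\tilde u(s)\tilde u(s')]=(\iota(s),\iota(s'))_{L_2(\Omega)}$ is continuous as the composition of the continuous map $(s,s')\mapsto(\iota(s),\iota(s'))$ with the inner product of $L_2(\Omega)$.

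To establish continuity of $\iota$, take $s_n\to s$ in $\Gamma$. Sample path continuity of $\tilde u$ gives $\tilde u(s_n)\to\tilde u(s)$ $\bbP$-a.s. The step that requires care, and which I expect to be the main obstacle, is upgrading this to convergence in $L_2(\Omega)$: for this one needs a uniform second moment bound $\sup_{s\in\Gamma}\pE[\tilde u(s)^2]<\infty$. I would obtain this from Fernique's theorem: since $\tilde u$ is a centred Gaussian process whose sample paths are $\bbP$-a.s.\ continuous on the compact graph $\Gamma$, the supremum $M=\sup_{s\in\Gamma}|\tilde u(s)|$ is $\bbP$-a.s.\ finite, and Fernique's theorem yields $\pE[M^2]<\infty$, whence $\sup_{s\in\Gamma}\pE[\tilde u(s)^2]\le\pE[M^2]<\infty$. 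Alternatively, this bound follows from the fact that the Cameron--Martin space of $u$ is $\dot{H}^\alpha$, which embeds continuously into $C(\Gamma)$ by Corollary~\ref{cor:sobembeddingHdot}, so that the point evaluations are uniformly bounded and $\sup_{s}\varrho(s,s)<\infty$.

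With the uniform variance bound in hand, the increments $\tilde u(s_n)-\tilde u(s)$ are centred Gaussian with uniformly bounded variance, hence uniformly bounded fourth moments, so the family $\{(\tilde u(s_n)-\tilde u(s))^2\}_n$ is bounded in $L_2(\Omega)$ and therefore uniformly integrable. Combined with the $\bbP$-a.s.\ convergence $(\tilde u(s_n)-\tilde u(s))^2\to 0$, this gives $\pE[(\tilde u(s_n)-\tilde u(s))^2]\to 0$, i.e.\ $\iota$ is continuous. Finally, continuity of $\varrho$ follows from the reduction above: for $(s_n,s'_n)\to(s,s')$, writing the difference $\pE[\tilde u(s_n)\tilde u(s'_n)]-\pE[\tilde u(s)\tilde u(s')]$ as $\pE[(\tilde u(s_n)-\tilde u(s))\tilde u(s'_n)]+\pE[\tilde u(s)(\tilde u(s'_n)-\tilde u(s'))]$ and bounding each term by Cauchy--Schwarz using the uniform variance bound shows that it tends to $0$, which completes the argument.
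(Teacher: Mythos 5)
Your proof is correct, but it takes a genuinely different route from the paper's. The paper's proof is essentially two lines: it invokes the quantitative second-moment estimate \eqref{holdercontvar} from Lemma~\ref{kolmchentbounds}, namely $\pE\bigl(|u(x)-u(y)|^2\bigr)\leq \|L^{-\alpha/2}\|_{\mathcal{L}(L_2(\Gamma),C^{0,\widetilde{\alpha}}(\Gamma))}^2\, d(x,y)^{2\widetilde{\alpha}}$, which is already $L_2(\Omega)$-continuity (indeed H\"older continuity in mean square) of $s\mapsto u(s)$, and continuity of $\varrho$ then follows exactly as in your final Cauchy--Schwarz step. You instead start from the qualitative conclusion of Theorem~\ref{thm:weakregularity} (a.s.\ continuous paths) and upgrade back to $L_2(\Omega)$-continuity via Fernique's theorem plus uniform integrability of the squared Gaussian increments; all the steps check out (the modification preserves finite-dimensional distributions, so $\tilde u$ is Gaussian and Fernique applies on the separable Banach space $C(\Gamma)$, and boundedness in $L_2(\Omega)$ of $(\tilde u(s_n)-\tilde u(s))^2$ from the uniform fourth-moment bound gives the Vitali upgrade from a.s.\ to $L_1$ convergence). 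What each approach buys: the paper's argument is shorter, avoids Fernique entirely, and yields a quantitative modulus ($L_2$-H\"older of order $\widetilde{\alpha}$) rather than mere continuity; your argument is more robust in that it uses only the stated conclusion of the regularity theorem and would apply verbatim to any centered Gaussian field with continuous paths on a compact metric space. Note also that your fallback route via the Cameron--Martin embedding $\dot{H}^\alpha\hookrightarrow C(\Gamma)$ is really the same computation as Lemma~\ref{kolmchentbounds} applied to $\delta_s$ instead of $\delta_x-\delta_y$, so had you pushed it one step further to the difference of Diracs you would have recovered the paper's proof outright.
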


\begin{proof}
Observe that since $u$ is a modification of $u_0$, we have, by Lemma \ref{kolmchentbounds}, that 
\begin{equation}\label{contcovfunc}
	\pE\left(|u(s) - u(s')|^2\right) = \pE\left(|u_0(s) - u_0(s')|^2\right) \leq \|\tau^{-1}L^{-\nicefrac{\alpha}{2}}\|_{\mathcal{L}(L_2(\Gamma),C^{0,\widetilde{\alpha}}(\Gamma))}^2 d(s,s')^{2\widetilde{\alpha}}.
\end{equation}
This implies that $u$ is $L_2$ continuous at each $s\in\Gamma$, so $\varrho$ is continuous at each ${(s,s')\in\Gamma\times\Gamma}$.
\end{proof}

We will now show that the covariance operator is an integral operator with kernel given by the covariance function. 

\begin{Lemma}\label{prp:covintegralrep}
Let $\alpha > \nicefrac12$ and $\mathcal{C}= \tau^{-2} L^{-\alpha}$ be the covariance operator of $u$, where $u$ is the solution of $L^{\nicefrac{\alpha}{2}} (\tau u) = \mathcal{W}$. Then,
$$(\mathcal{C}\phi)(s') =  (\varrho(\cdot, s'), \phi)_{L_2(\Gamma)} =  \sum_{e\in\mathcal{E}} \int_e \varrho(s,s')\phi(s)ds,\quad s'\in\Gamma,$$
where $\varrho$ is the covariance function of $u$.
\end{Lemma}

\begin{proof}
Assume without loss of generality that $\tau=1$ and 
let $T:L_2(\Gamma)\to L_2(\Gamma)$ be the integral operator with kernel $\varrho$,
$(T\phi)(s') = (\varrho(\cdot, s'), \phi)_{L_2(\Gamma)} =  \sum_{e\in\mathcal{E}} \int_e \varrho(s,s')\phi(s)ds.$
Now, fix $\alpha > \nicefrac12$ and recall that, by Corollary \ref{cor:covfunccont}, $\varrho$ is continuous. Since, $\Gamma$ is compact, $\varrho$ is bounded, say by $K>0$. Further, since $\Gamma$ has finite measure, $L_2(\Gamma)\subset L_1(\Gamma)$, so for any $\phi,\psi\in L_2(\Gamma)$, we have, by the Cauchy-Schwarz inequality,
\begin{align*}
	\sum_{e,\widetilde{e}\in\mathcal{E}}  \int_{\widetilde{e}}\int_{e} \pE(|u(s)u(s') \phi(s)\psi(s')|) ds ds' &\leq \sum_{e,\widetilde{e}\in\mathcal{E}}  \int_{\widetilde{e}}\int_{e} \sqrt{\varrho(s,s)\varrho(s',s')} |\phi(s)| |\psi(s')| dsds'\\
	&\leq K \|\phi\|_{L_1(\Gamma)} \|\psi\|_{L_1(\Gamma)} <\infty.
\end{align*}
Therefore, by Fubini's theorem,
\begin{align*}
	(T\phi, \psi)_{L_2(\Gamma)} &=  \sum_{e,\widetilde{e}\in\mathcal{E}}  \int_{\widetilde{e}} \int_{e} \varrho(s,s') \phi(s) ds\psi(s') ds'
	= \sum_{e,\widetilde{e}\in\mathcal{E}}   \int_{\widetilde{e}} \int_{e} \pE(u(s)u(s')) \phi(s) ds\psi(s') ds' \\
	&= \sum_{e,\widetilde{e}\in\mathcal{E}}  \pE\left( \int_{\widetilde{e}} \left( \int_e u(s) \phi(s) ds\right) u(s') \psi(s') ds' \right)\\
	& = \sum_{e,\widetilde{e}\in\mathcal{E}}  \pE\left((u,\phi)_{L_2(e)}(u,\psi)_{L_2(e)}\right) = \pE\left((u,\phi)_{L_2(\Gamma)} (u,\psi)_{L_2(\Gamma)} \right)
	= (\mathcal{C} \phi,\psi)_{L_2(\Gamma)}.
\end{align*}

This shows that the covariance operator is the integral operator with kernel $\varrho(\cdot,\cdot)$.
\end{proof}

Since the covariance operator is an integral operator, we have the  following series expansion of the covariance function. 
\begin{Proposition}\label{prp:mercercov}
Let $\alpha > \nicefrac12$, then the covariance function $\varrho$ admits the series representation  in terms of the eigenvalues and eigenvectors of $L$:
$$
\varrho(s,s') = \sum_{i=1}^{\infty} \frac{\tau^{-2}}{(\kappa^2 + \hat{\lambda}_i)^{\alpha}} \eig_i(s)\eig_i(s'),
$$
where the convergence of the series is absolute and uniform. The series also converges in $L_2(\Gamma\times\Gamma)$.
\end{Proposition}

\begin{proof}
It is a direct consequence of Lemma \ref{prp:covintegralrep}, Corollary \ref{cor:covfunccont} and Mercer's theorem \cite{Steinwart2012}. Furthermore, since the $\Gamma$ has finite measure, it follows by the dominated convergence theorem that the series also converges in $L_2(\Gamma\times\Gamma)$.
\end{proof}

The continuity of the covariance function implies that we can represent $u$ through the Karhunen--Lo\`eve expansion as explained in the following proposition.

\begin{Proposition}\label{prp:KLexp}
Let $\alpha > \nicefrac{1}{2}$, and let $u$ be the solution of \eqref{eq:spde}. Then, 
$
u(s) = \tau^{-1}\sum_{i=1}^{\infty} \xi_i (\kappa^2 + \hat{\lambda}_i)^{-\nicefrac{\alpha}{2}} \eig_i(s),
$
where $\xi_i$ are independent standard Gaussian variables. Further, the series 
\begin{equation}\label{eq:un}
	u_n(s) = \tau^{-1}\sum_{i=1}^n \xi_i (\kappa^2 + \hat{\lambda}_i)^{-\nicefrac{\alpha}{2}} \eig_i(s),
\end{equation} converges in $L_2(\Omega)$ uniformly in $s$, that is,
$\lim_{n\to\infty} \sup_{s\in \Gamma} \pE\left(\left| u(s) - u_n(s)\right|^2 \right) =0.$
\end{Proposition}

\begin{proof}
Assume, without loss of generality, that $\tau=1$  and 
define for each $i\in\mathbb{N}$ the random variable ${\xi_i = \lambda_i^{\nicefrac{\alpha}{2}} (u, \eig_i)_{L_2(\Gamma)}}$.  By Lemma \ref{weakregularity}, we have that 
$\xi_i = \lambda_i^{\nicefrac{\alpha}{2}} \mathcal{W}(L^{-\nicefrac{\alpha}{2}}\eig_i) =  \mathcal{W}(\eig_i)$, and therefore, $\xi_i$ are centered Gaussian random variables. 
Further, by \eqref{isometryW}, we have $\pE(\xi_i^2) = 1$ and if $i\neq j$, $\pE(\xi_i \xi_j) = (\eig_i,\eig_j)_{L_2(\Gamma)} = 0.$ 
Hence, by the Gaussianity of $\xi_i$, $\{\xi_i\}_{i\in\mathbb{N}}$ is a sequence of independent standard Gaussian random variables. 
Thus, for every $s\in\Gamma$,
\begin{align*}
	\pE\left( |u(s) - u_n(s)|^2\right) &=  \pE(u_n(s)^2) - 2 \pE(u_n(s) u(s)) + \pE(u(s)^2)\\
	&= \sum_{i=1}^n \frac1{\lambda_i^{\alpha}} \eig_i(s)^2 - 2\sum_{i=1}^n \frac1{\lambda_i^{\nicefrac{\alpha}{2}}}\pE(u(s)\xi_i) \eig_i(s) + \varrho(s,s).
\end{align*}
Now, by Fubini's theorem (with a similar justification as the one in the proof of Lemma~\ref{prp:covintegralrep}) and the fact that $\rho$ is the kernel of the covariance operator $L^{-\alpha}$, we have that
\begin{align*}
	\pE(u(s)\xi_i) &= (\lambda_i^{\nicefrac{\alpha}{2}}\pE(u(s) u), \eig_i)_{L_2(\Gamma)} = \lambda_i^{\nicefrac{\alpha}{2}}(\varrho(s,\cdot),\eig_i)_{L_2(\Gamma)}
	= \lambda_i^{\nicefrac{\alpha}{2}}(L^{-\alpha} \eig_i)(s) = \lambda_i^{-\nicefrac{\alpha}{2}} \eig_i(s).
\end{align*}
Thus, by the previous computations and Corollary \ref{prp:mercercov}, we have that
$$ 
\lim_{n\to\infty} \sup_{s\in\Gamma} \pE\left( |u(s) - u_n(s)|^2\right) =  \lim_{n\to\infty} \sup_{s\in\Gamma} \Bigl( \varrho(s,s) - \sum_{i=1}^n \frac1{(\kappa^2 + \hat{\lambda}_i)^{\alpha}} \eig_i(s)^2 \Bigr) = 0.
$$
\end{proof}

Given that we are on a graph where the eigenfunctions of $\Delta_{\Gamma}$ are known, we can use the Karhunen--Lo\`eve expansion as a simulation method by using the truncated series expansion \eqref{eq:un}. 

\begin{example}\label{ex:tadpole}
As a concrete example, consider the graph in Figure~\ref{fig:cov_example}. 
Since we can remove vertices of degree two without changing the model 
(see Proposition~\ref{prop:join}), and since the edges are parameterized in terms of arc length, the graph  may be replaced by a so-called tadpole graph consisting of two vertices and two edges as shown in Figure~\ref{fig:tadpole}. 
Tadpole graphs are well-studied in the quantum graph literature and are important building blocks for more sophisticated graphs \cite[see, e.g.][]{Serio2021}. We assume that the left edge $e_1$ has length $1$ and that the circular edge $e_2$ has length $2$, and parameterize a point on $e_1$ as $s = (e_1,t)$ for $t\in [0,1]$ and a point on $e_2$ by $s = (e_2,t)$ for $t\in[0,2]$
Then it is easy to verify that the constant function $\phi_0(s) = 1/\sqrt{3}$ and the two sets $\{\phi_i\}_{i\in\bbN}$ and $\{\psi_i\}_{2i\in\bbN}$ form the eigenfunctions of $-\Delta_\Gamma$ with eigenvalues $0$, $\{(i\pi/2)^2\}_{i\in\bbN}$ and  $\{(i\pi/2)^2\}_{2i\in\bbN}$, respectively. Here
$$
\phi_i(s) = C_{\phi,i} \begin{cases}
	-2\sin\left(\nicefrac{i\pi}{2}\right)\cos\left(\nicefrac{i\pi t}{2}\right) & s \in e_1,\\
	\sin\left(\nicefrac{i\pi t}{2}\right) & s \in e_2,
\end{cases},
\qquad 
\psi_i(s) = \frac{\sqrt{3}}{\sqrt{2}} \begin{cases}
	(-1)^{\nicefrac{i}{2}}\cos\left(\nicefrac{i\pi t}{2}\right) & s \in e_1,\\
	\cos\left(\nicefrac{i\pi t}{2}\right) & s \in e_2,
\end{cases}
$$
with $C_{\phi,i} = 1$ if $i$ is even and $C_{\phi,i} = 1/\sqrt{3}$ otherwise, and these functions form an orthonormal basis for $L_2(\Gamma)$. 
Thus, we can use Proposition~\ref{prp:mercercov} to evaluate the covariance function and Proposition~\ref{prp:KLexp} to simulate the field. 
This is how Figure~\ref{fig:cov_example}  was created, and an example with $\alpha=2.5$ is shown in Figure \ref{fig:tadpole}.
\end{example}

The rate of convergence  of the approximation \eqref{eq:un} is clarified in the following proposition. 

\begin{Proposition}\label{prp:rateKL}
If $\alpha > \nicefrac{1}{2}$, $u_n$ converges to $u$ in  $L_2(\Omega, L_2(\Gamma))$,
${\lim_{n\to\infty}  \left\| u - u_n\right\|_{L_2(\Omega,L_2(\Gamma))} = 0}$,
and there exists some constant $C>0$ such that for all $n\in\mathbb{N}$,
$
\|u-u_n\|_{L_2(\Omega, L_2(\Gamma))} \leq C n^{-(\alpha-\nicefrac12)}.
$
\end{Proposition}
\begin{proof}
Assume without loss of generality that $\tau=1$.
Note that by Proposition \ref{prp:KLexp} combined with the fact that $\Gamma$ has finite measure, it follows from the dominated convergence theorem $u_n$ converges to $u$ in the $L_2(\Omega,L_2(\Gamma))$ norm. To simplify the notation, we write $L_2 = L_2(\Omega,L_2(\Gamma))$.
Let $N>n$ and observe that
\begin{align*}
	\left\| u_N - u_n\right\|_{L_2}^2 &= \Bigl\|  \sum_{i=n+1}^{N} \xi_i (\kappa^2 + \hat{\lambda}_i)^{-\nicefrac{\alpha}{2}} \eig_i \Bigr\|_{L_2}^2\\
	&=  \sum_{e\in\mathcal{E}} \int_e \pE\left( \sum_{i=n+1}^N \sum_{j=n+1}^N (\kappa^2 + \hat{\lambda}_i)^{-\nicefrac{\alpha}{2}}(\kappa^2 + \hat{\lambda}_j)^{-\nicefrac{\alpha}{2}} \xi_i \xi_j \eig_i(s) \eig_j(s)\right) ds  \\
	&=  \sum_{i=n+1}^N  (\kappa^2 + \hat{\lambda}_i)^{-\alpha} \pE(\xi_i^2) \sum_{e\in\mathcal{E}} \int_e \eig_i(s)^2ds =  \sum_{i=n+1}^N  (\kappa^2 + \hat{\lambda}_i)^{-\alpha}.
\end{align*}	
Since we have convergence of $u_N$ to $u$ in the $L_2$-norm, we obtain by the above expression, the Weyl asymptotics \eqref{eq:weyl} and the integral test for series, that
\begin{align*}
	\left\| u - u_n\right\|_{L_2}^2 &= \lim_{N\to\infty} \left\| u_N - u_n\right\|_{L_2}^2
	= \lim_{N\to\infty} \left\|  \sum_{i=n+1}^{N} \xi_i (\kappa^2 + \hat{\lambda}_i)^{-\nicefrac{\alpha}{2}} \eig_i \right\|_{L_2}^2\\
	&= \lim_{N\to\infty} \sum_{i=n+1}^N (\kappa^2 + \hat{\lambda}_i)^{-\alpha} = \sum_{i=n+1}^\infty (\kappa^2 + \hat{\lambda}_i)^{-\alpha} 
	\leq C  \sum_{i=n+1}^\infty i^{-2\alpha} \leq \frac{C}{n^{2\alpha-1}}.
\end{align*}
\end{proof}

We end this section by verifying the rate of the spectral simulation method derived in Proposition~\ref{prp:rateKL} numerically for the previous example.

\begin{figure}[t] 
\begin{center}
	\includegraphics[width=0.45\linewidth]{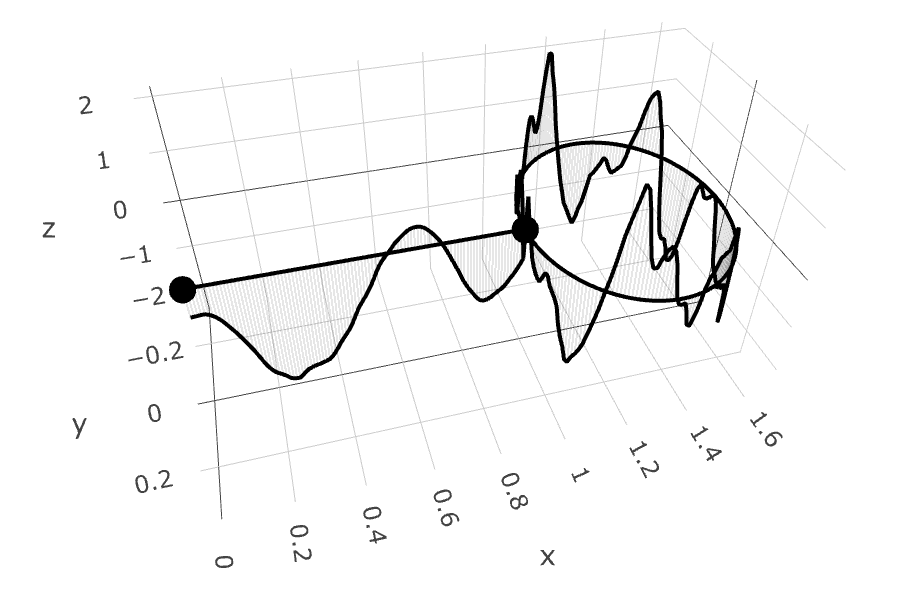}
	\includegraphics[width=0.4\linewidth]{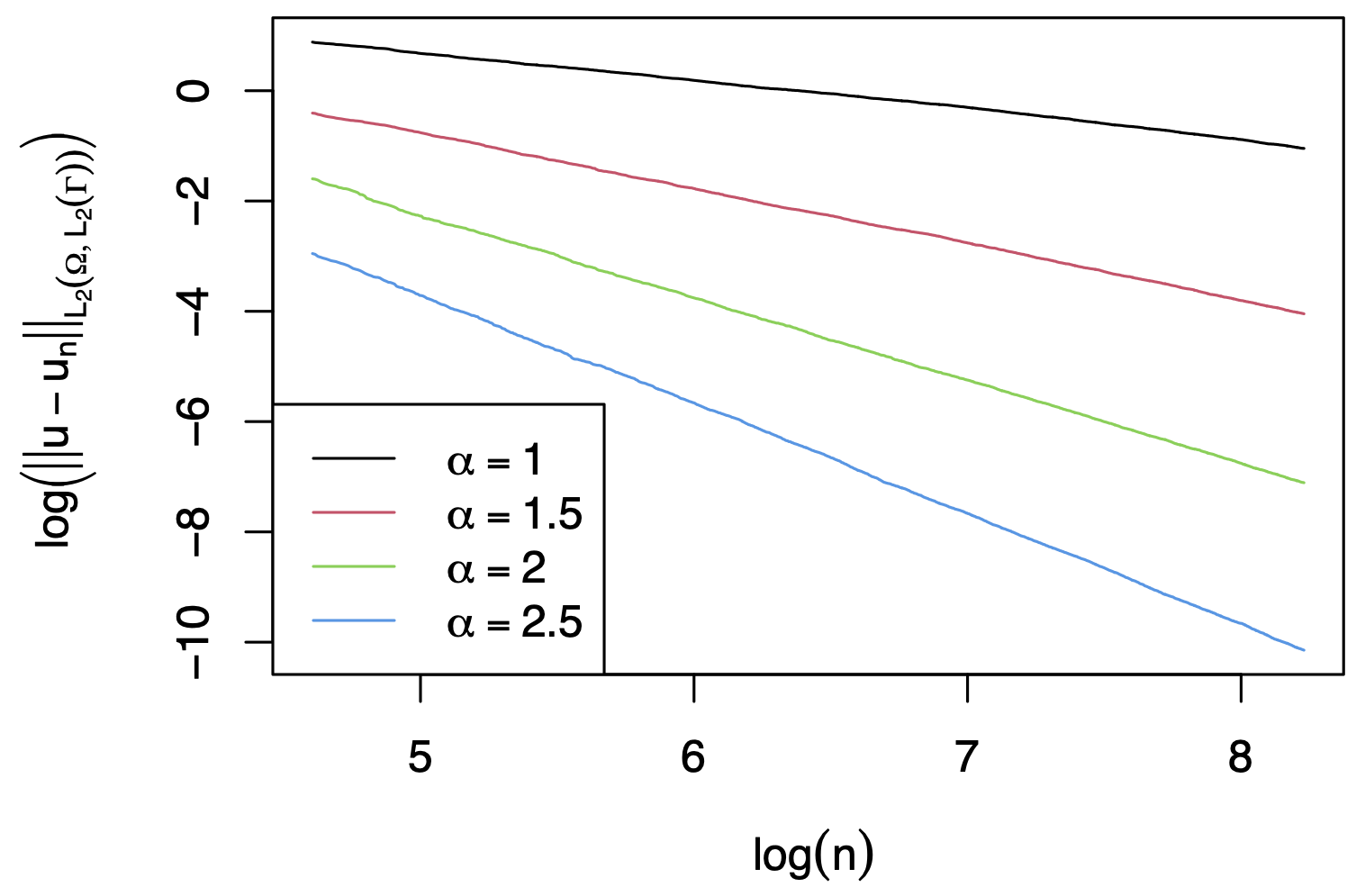}
\end{center}
\vspace{-0.5cm} 
\caption{Example of a Whittle--Mat\'ern field with $\alpha=2.5$ and $\kappa=10$ on a tadpole graph (left) and log-log plot of errors of the spectral simulation method (right).  }
\label{fig:tadpole}
\end{figure}
\begin{example}
Consider the tadpole graph in Example~\ref{ex:tadpole}. We  construct reference simulations based on overkill Karhunen--Lo\` eve expansions. More precisely,
we fix the parameters $(\kappa,\tau,\alpha)$ and use \eqref{eq:un} with $n = n_{ok} = 15000$  to generate $N$ independent reference simulations 
$u_{ok}^{(i)}$ for $i=1,\ldots, N$. To approximate the $L_2(\Omega, L_2(\Gamma))$ errors for a simulation with a fixed $n<n_{ok}$, we compute
$$
\|u_{n_{ok}} - u_n\|_{L_2(\Omega, L_2(\Gamma))}^2 \approx \frac1{N m} \sum_{i=1}^N \sum_{j=1}^m \left(u_{n_{ok}}^{(i)}(s_j) - u_n^{(i)}(s_j)\right)^2,
$$
where $s_1,\ldots, s_m$ are evenly spaced locations over the  graph. We in particular used $m=300$ and ${N=10}$ and considered four different values of $\alpha$, $\{1,1.5,2,2.5\}$. The other  parameters were fixed 
as $\kappa=10$ and $\tau^2 = \Gamma(\alpha-\nicefrac12)/(\Gamma(\alpha)\sqrt{4\pi}\kappa^{2\alpha-1})$. Here, the choice of $\tau$ makes the variance of the process approximately equal to 1 at locations away from the vertices. The errors for four different sets of parameters are shown in Figure \ref{fig:tadpole} for $n=100,\ldots, 3750$. As can be seen in that log-log plot, the errors are decreasing linearly, and by fitting a line to the data for each case using ordinary least squares we obtain the estimated convergence rates shown in Table \ref{tab:rates}, which validate the theoretical rates.

\begin{table}[t]
	\caption{Observed (theoretical) rates of convergence for the strong errors  shown in Figure~\ref{fig:tadpole}.}
	{\centering
		\begin{tabular}{rcccc}
			\toprule
			$\alpha$ &  $1$  & $1.5$ & $2$ & $2.5$ \\
			\cmidrule(r){2-5}
			Convergence rates & 0.558 (0.5) & 1.013 (1.0)& 1.504 (1.5) & 1.989 (2.0)\\
			\bottomrule
	\end{tabular}}
	\label{tab:rates}
\end{table}
\end{example}

\section{Mean-square differentiability}\label{sec:mean-square}
Because of the continuity of the covariance function (Corollary \ref{cor:covfunccont}), we have that for every ${\alpha>\nicefrac12}$, the solution of $u$ of 
\eqref{eq:spde} is $L_2(\Omega)$ continuous. However, for $\alpha\geq 2$ we can say more. To such an end, we need some additional
definitions.

Let $H_u = \overline{\textrm{span}\{u(s): s\in\Gamma\}}$. Let $e$ be an edge and let $f:e \to H_u$ be a function. 
We say that $f$ is weakly differentiable at $s$ in the $L_2(\Omega)$ sense if there exists $f'(s)\in H_u$
such that for every $w\in H_u$ and every sequence $s_n\to s$, with $s_n\neq s$, we have $\pE(w(f(s_n)-f(s))/(s_n-s)) \to \pE(wf'(s))$. 
We define the higher order weak derivatives in the $L_2(\Omega)$ sense inductively: for $k\geq 2$ we say that $f$ has $k$th order weak
derivative at $s\in e$ if $f^{(k-1)}(s')$ exists for every $s'\in e$ and it is weakly differentiable at $s$. Finally, we say that a function
$f: e \to H_u$ is weakly continuous in the $L_2(\Omega)$ sense if for every $w\in H_u$, the function $s\mapsto \pE(w f(s))$ is continuous.

The following Lemma studies the existence (and weak continuity) of weak derivatives in the $L_2(\Omega)$ sense for solutions of \eqref{eq:spde}: 

\begin{Lemma}\label{prp:oddorderderiv}
Let $\alpha\geq 2$ and let $u$ be a solution of $L^{\nicefrac{\alpha}{2}} (\tau u) = \mathcal{W}$. Then, for each edge $e\in\mathcal{E}$, 
the directional derivatives of $u$ up to order $\lfloor\alpha\rfloor-1$ exist weakly in the $L_2(\Omega)$ sense and are weakly continuous on $e$ in the $L_2(\Omega)$ 
sense (on the boundary of the edges one must consider the lateral derivatives). 
\end{Lemma}

\begin{proof}
We begin by briefly introducing (and characterizing) the Cameron-Martin space (also known as reproducing kernel Hilbert space) associated to $u$. 
Let $\mathcal{H}_u = \{h(s) = \pE(w u(s)): s\in\Gamma, w\in H_u\}$. Then, $\mathcal{H}_u$ is the Cameron-Martin 
space associated to $u$. Now, it follows from 
Proposition \ref{prp:KLexp} that
$\mathcal{H}_u = \dot{H}^\alpha.$
Take any $h \in \mathcal{H}_u$, so that $h\in\dot{H}^\alpha$ and note that for each edge $e\in\mathcal{E}$, we have, from Proposition \ref{prp:hdotk}, that
$$\|h\vert_e\|_{H^{\lfloor\alpha\rfloor}(e)} \leq \|h\|_{\widetilde{H}^{\lfloor\alpha\rfloor}(\Gamma)} \leq C \|h\|_{\dot{H}^{\lfloor\alpha\rfloor}},$$
for some constant $C>0$. This tells us that for each edge $e\in\mathcal{E}$, $h\vert_e \in H^{\lfloor\alpha\rfloor}(e)$, and it follows from standard Sobolev embedding (or directly by noticing
that $H^1(e)$ coincides with the space of absolutely continuous functions), that for each $h\in\mathcal{H}_u$ and each $e\in\mathcal{E}$, $h\vert_e \in C^{\alpha-1}(e)$.

Fix an edge $e\in\mathcal{E}$. We have that for any $w\in H_u$, the map $h(t) = \pE(w u_e(t))$ is continuously differentiable in $[0,\ell_e]$. Now, note that this gives
us that for any $t\in [0,\ell_e]$ and any sequence $t_n \to t$, $t_n \in [0,\ell_e]$ and $t_n\neq t$, the sequence $(u_e(t_n)-u_e(t))/(t_n-t)$ is weakly Cauchy in $H_u$. Since $H_u$ is 
Hilbert space, it is weakly sequentially complete. Thus, there exists some $u_e'(t)\in H_u$ such that 
\begin{equation}\label{eq:weakconvderiv}
	\frac{u_e(t_n)-u_e(t)}{t_n-t} \stackrel{w}{\longrightarrow} u_e'(t).
\end{equation}
Since $h(t) = \pE(w u_e(t))$ is differentiable, the limit $u_e'(t)$ does not depend on the choice of the sequence $t_n\to t$. Furthermore, the weak convergence also 
gives us that $h'(t) = \pE(w u_e'(t))$ (where we take lateral derivatives if $t$ belongs to the boundary of $[0,\ell_e]$). 
This proves the existence of the weak derivative in the $L_2(\Omega)$-sense. Furthermore, since $h$ is continuously differentiable, we have that $h'(t)$ is continuous,
so $u_e'$ is weakly continuous in the $L_2(\Omega)$ sense.

Let us now consider the case ${\lfloor\alpha\rfloor}>2$. For each $t\in [0,\ell_e]$, $\widetilde{h}(t,\cdot)\in H^{\alpha-1}([0,\ell_e]) \subset C^1([0,\ell_e])$. 
Similarly, for each $t\in [0,\ell_e]$, $\widetilde{h}(\cdot, t)\in C^1(e)$. This gives us that $\widetilde{h} \in C^1([0,\ell_e]\times [0,\ell_e])$, so $\widetilde{h}$ is, in particular,
continuous in $[0,\ell_e]\times [0,\ell_e]$, which gives us that $u_e'$ is the $L_2(\Omega)$-derivative of $u$ in $e$ and it is $L_2(\Omega)$ continuous. 
Thus, we can iterate this argument until we obtain that $u_e', \ldots, u_e^{({\lfloor\alpha\rfloor}-2)}$ are the $L_2(\Omega)$-derivatives and are continuous in 
the $L_2(\Omega)$ sense. Finally, the case $u_e^{({\lfloor\alpha\rfloor}-1)}$ is handled in the same fashion as we handled $u_e'(\cdot)$ in the ${\lfloor\alpha\rfloor}=2$ case.
\end{proof}

\begin{Remark}
Observe that we have, from the proof of Lemma \ref{prp:oddorderderiv}, that for $\alpha\geq 2$, and for each edge $e$, the derivatives 
$u_e',\ldots, u_e^{(\lfloor\alpha\rfloor -2)}$ exist in the strong sense, that is, these derivatives are the $L_2(\Omega)$ derivatives,
and they are (strongly) continuous in the $L_2(\Omega)$ sense.
\end{Remark}

We will now show that the weak derivatives (in the Sobolev sense) and the weak derivatives in the $L_2(\Omega)$ sense of the solutions
of \eqref{eq:spde} coincide for almost every point in $\Gamma$. Since the weak derivatives in the $L_2(\Omega)$ sense are weakly continuous (in the $L_2(\Omega)$ sense),
the next result tells us that the point evaluation of the weak derivative (in the Sobolev sense) is well-defined (by choosing the weak derivative in $L_2(\Omega)$
sense as a modification of the weak derivative in the Sobolev sense). Furthermore, since they agree, there is no confusion in using the same notation for both
the weak derivative in the Sobolev sense and the weak derivative in the $L_2(\Omega)$ sense.

\begin{Proposition}\label{cor:L2diffweakdiff}
If $\alpha\geq 2$, then the weak derivative of solution $u$ of \eqref{eq:spde} in the 
$L_2(\Omega)$ sense coincide at almost every point of $\Gamma$ with the weak derivative of $u$ in the Sobolev sense 
(if $s$ is a boundary term, then one must consider a lateral derivative). Moreover, the same result is true for higher order derivatives when they exist.
\end{Proposition}

\begin{proof}
Let $e\in\mathcal{E}$ be an edge. Throughout the proof, we will denote the weak derivative of $u_e$ in the Sobolev sense by $u_e'$ and the
weak derivative in the $L_2(\Omega)$ sense by $\widetilde{u}_e'$. For a real function $f$, let $\Delta_h f(t) = (f(t+h) - f(t))/h$ be the difference quotient. It is well-known (see, for instance, \cite[Theorem 3, p.292]{evans2010partial}) 
that for any closed set $I$ contained in the interior of $[0,\ell_e]$ (by allowing lateral derivative, in which case we would consider $h>0$ or $h<0$, 
we can assume that $I$ contains one of the boundary points of $e$), and any $h\in\mathbb{R}$ such that $0< |h| < dist(I, \partial e)$, we have
\begin{equation}\label{eq:quotdiffderivineq}
	\|\Delta_h u_e\|_{L_2(I)} \leq C \|u_e'\|_{L_2(e)},
\end{equation}
for some constant $C>0$. By this inequality, the fact that $u_e\in H^1([0,\ell_e])$, and a simple argument by approximation by smooth functions, one can readily prove the well-known result that
$\Delta_h u_e$ converges to $u_e'$ in $L_2(I)$, that is,
\begin{equation}\label{eq:convinL2weakdiff}
	\|\Delta_h u_e - u_e'\|_{L_2(I)} \to 0,\quad\hbox{as $h\to 0$}.
\end{equation}
Now, observe that, by \eqref{eq:quotdiffderivineq},
\begin{equation}\label{eq:L2ineqquotdiff}
	\|\Delta_h u_e -u_e'\|_{L_2(I)}^2 \leq 2(\|\Delta_h u_e\|_{L_2(I)}^2 + \|u_e'\|_{L_2(e)}^2) \leq 2(C^2+1)\|u_e'\|_{L_2(e)}^2.
\end{equation}
Since $\alpha\geq 2$, it follows by the proof of Theorem \ref{regularity1} and Proposition \ref{prp:hdot1} that $u\in L_2(\Omega, H^1(\Gamma))$, so that
$\pE(\|u_e'\|_{L_2(e)}^2) <\infty$. Therefore, by \eqref{eq:convinL2weakdiff}, \eqref{eq:L2ineqquotdiff}, and the fact that $\pE(\|u_e'\|_{L_2(e)}^2) <\infty$,
we can use the dominated convergence theorem to conclude that
$\pE\bigl(\|\Delta_h u_e - u_e'\|_{L_2(I)}^2\bigr) \to 0$ as $h\to 0$.
Also, by Fubini's theorem, we have that
$$
\int_I \pE(|\Delta_h u_e(t) - u_e'(t)|^2) dt \to 0,\quad\hbox{as $h\to 0$}.
$$
This tells us that $\pE(|\Delta_h u_e(t) - u_e'(t)|^2) \to 0$ in $L_1(I)$ as $h\to 0$. Therefore, there exists a sequence $h_n\to 0$ 
such that $\pE(|\Delta_{h_n} u_e(t) - u_e'(t)|^2)  \to 0$ almost everywhere with respect to the Lebesgue measure in $I$. Hence, for almost every $t\in I$,
$\Delta_{h_n} u_e(t) \to u_e'(t)$, in $L_2(\Omega)$ as $n\to\infty$.
Since $\Delta_{h_n} u_e(t)$ converges strongly in $L_2(\Omega)$, it converges weakly in $L_2(\Omega)$ and thus weakly in $H_u$, so  $\Delta_{h_n} u_e(t) \stackrel{w}{\longrightarrow} u_e'(s)$.	Since we also have that, by Lemma \ref{prp:oddorderderiv}, $\Delta_{h_n} u_e(t)  \stackrel{w}{\longrightarrow} \widetilde{u}_e'(t)$. It follows from the
uniqueness of the weak limit, we have that $u_e'(t) = \widetilde{u}_e'(t)$. Since $I$ contained in the interior of $e$ was arbitrary, it follows that for 
almost every $t$ in $[0,\ell_e]$, $u_e'(t) = \widetilde{u}_e'(t)$. This proves the result for the first derivative. The same argument can be applied to obtain the
result for the higher order derivatives when they exist.
\end{proof}

As a Corollary, we obtain the covariances between the weak derivatives of the solution of \eqref{eq:spde}:

\begin{Corollary}
Let $\alpha\geq 2$ and let $u$ be the solution of \eqref{eq:spde}. Fix any edge $e\in\mathcal{E}$. 
Let $u_e, u_e', \ldots, u_e^{(\lfloor\alpha\rfloor-1)}$ be the directional weak derivatives of $u$. Then, for any
$t,t'\in [0,\ell_e]$, and $j,k \in \{0,\ldots, \lfloor\alpha\rfloor-1\}$, we have 
$$\pE\left(u_e^{(j)}(t) u_e^{(k)}(t')\right) = \frac{\partial^{j+k}}{\partial t^j \partial^k}\pE(u_e(t) u_e(t')).$$
\end{Corollary}
\begin{proof}
First, observe that by Proposition \ref{cor:L2diffweakdiff} the weak derivatives in the Sobolev sense agree with the weak derivatives in the $L_2(\Omega)$ sense
in almost every point of $\Gamma$. Therefore, we can view the derivatives as weak derivatives in the $L_2(\Omega)$ sense. Now, recall the Cameron-Martin space
$\mathcal{H}_u = \dot{H}^\alpha$ (see the proof of Lemma \ref{prp:oddorderderiv}), and note that the function $h(t') = \pE(u_e(t) u_e(t'))$ belongs to $\mathcal{H}_u$. 
Furthermore, it also follows from the proof of Lemma \ref{prp:oddorderderiv} that $h'(t') = \pE(u_e(t)u_e'(t'))$. Hence,
$$
\frac{\partial}{\partial t'}\pE(u_e(t) u_e(t')) = \pE(u_e(t)u_e'(t')).
$$
Now, note that $u_e'(t) \in H_u$, so the function
$g(t') = \pE(u_e'(t) u_e(t'))$ belongs to $\mathcal{H}_u$. We then have that $g'(t') = \pE(u_e'(t) u_e'(t'))$. On the other hand, we have that
$g(t') = \partial \pE(u_e(t) u_e(t')) /\partial t$. Therefore,
$$\frac{\partial^2}{\partial t' \partial t}\pE(u_e(t) u_e(t')) = \pE(u_e'(t)u_e'(t')).$$
The result thus follows by iterating this argument.
\end{proof}

As a final result, we now show that the derivatives satisfy the Kirchhoff vertex conditions, that is, the even-order derivatives are continuous at the vertices and that the
odd-order derivatives satisfy that the sum of the directional derivatives at the vertices is zero.

\begin{Proposition}\label{prp:derivvertexcond}
	Let $\alpha\geq 2$ and let $u$ be a solution of $L^{\nicefrac{\alpha}{2}} (\tau u) = \mathcal{W}$. Fix any $k \in \{0,\ldots, \ceil{\alpha - \nicefrac{1}{2}}  -1\}$. 
	If $k$ is odd, we have, for each $v\in\mathcal{V}$, that $\sum_{e\in\mathcal{E}_v} \partial_{e}^{k} u(v) = 0$. If $k$ is even, we have,
	for each $v\in\mathcal{V}$ and each pair $e,e'\in\mathcal{E}_v$, that $\partial_e^{k} u(v) = \partial_{e'}^{k}u(v)$.
\end{Proposition}
\begin{proof}
	We begin by checking that the first-order directional derivatives satisfy Kirchhoff vertex conditions. To this end, recall the Cameron-Martin space
	$\mathcal{H}_u \cong \dot{H}^\alpha$ from the proof of Lemma \ref{prp:oddorderderiv}. Now, observe that for every vertex $v\in\mathcal{V}$,
	$w = \sum_{e\in \mathcal{E}_v} \partial_e u(v)$ belongs to $H_u$, where $\partial_e u(v) = \pm u_e'(v)$ and the sign is determined by the direction.
	Then, the function $h:\Gamma\to \mathbb{R}$ given by $h(s) = \pE(w u(s))$ belongs to $\mathcal{H}_u$. Since $\alpha\geq 2$, $\dot{H}^\alpha \subset \dot{H}^2$ and,
	by Proposition \ref{prp:hdotk}, $\dot{H}^2 \cong \widetilde{H}^2(\Gamma)\cap C(\Gamma) \cap K(\Gamma)$. Therefore, $h\in K(\Gamma)$. This means that
	$$
	0 = \sum_{e\in\mathcal{E}_v} \partial_{e} h(v) = \pE\Bigl(w \Bigl(\sum_{e\in\mathcal{E}_v} \partial_e u(v)\Bigr)\Bigr) = \pE\Bigl( \Bigl(\sum_{e\in\mathcal{E}_v} \partial_e u(v)\Bigr)^2\Bigr),
	$$
	where the last equality came from our choice of $w\in H_u$. Therefore, $\sum_{e\in\mathcal{E}_v} \partial_e u(v) = 0$. Similarly, take $\alpha \geq 3$
	and for each pair of edges, $e,e'\in\mathcal{E}_v$, let $w = \partial_e^2u(v) - \partial_{e'}^2 u(v) = u_e''(v) - u_{e'}''(v)$. Note that ${w\in H_u}$ and define $g:\Gamma\to\mathbb{R}$ by
	${g(s) = \pE(w u(s))}$. Since $w\in H_u$, we have that $g\in \mathcal{H}_u\cong\dot{H}^\alpha$, which by Proposition \ref{prp:hdotk} shows that 
	$g|_e''(v) = g|_{e'}''(v)$. However, it follows by the proof of Lemma \ref{prp:oddorderderiv}, that $g|_e''(v) = \pE(w u_e''(v))$ and $g|_{e'}''(v) = \pE(wu_{e'}''(v))$.
	Therefore, 
	$$
	0 = g|_e''(v) - g|_{e'}''(v) = \pE(w(u_e''(v) - u_{e'}''(v))) = \pE((u_e''(v) - u_{e'}''(v))^2).
	$$ 
	This gives us $u_e''(v) = u_{e'}''(v)$. 
	The same argument can be applied to obtain the result for higher order derivatives.
\end{proof}

\section{A comparison to isotropic fields}\label{sec:isotropic}

\begin{figure}[t]
\includegraphics[width=0.5\linewidth]{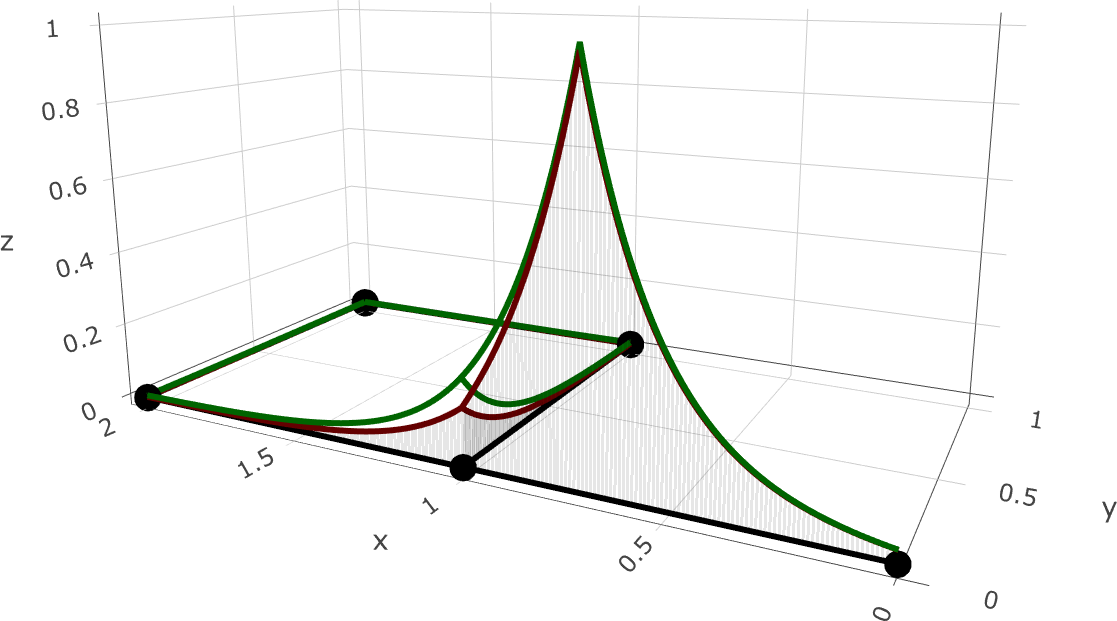}
\caption{The covariance of a Whittle--Mat\'ern field with $\alpha=1$ (red) compared to that of an isotropic Gaussian field with an exponential covariance function in the resistance metric (green). The covariances between the location $0.7$ on the rightmost edge all other locations are shown. }
\label{fig:resistance_compare}
\end{figure}
For graphs with Euclidean edges, one may use the approach by \cite{anderes2020isotropic} to define Mat\'ern-like fields with $\alpha \leq 1$.
In this section, we briefly compare this approach and the Whittle--Mat\'ern fields. 
A comparison between the Whittle--Mat\'ern covariance $\varrho$ with $\alpha = 1$ and the isotropic exponential covariance $\varrho_{exp}$ (based on the resistance metric) is shown in Figure~\ref{fig:resistance_compare}, where the covariance between the field at the location $s_{0}=(1,0.7)$ on the first edge
and all other locations are shown for both models. 

Let $s_{1} = (1,0.3)$ denote a second location on the first edge, let $s_2 = (2,0.1)$ denote a location on the second edge and $s_3 = (3,0.1)$ a location on the third edge. Then $d(s_{1},s_{0}) = d(s_2,s_{0}) = d(s_3,s_{0}),$ and we have that $\varrho(s_0,s_1)  > \varrho(s_0, s_2) = \varrho(s_0,s_3)$ for the Whittle--Mat\'ern model. 
This is a realistic feature for many applications. For example, when modeling traffic intensities in a road network, one might expect that the 
traffic patterns of road segments separated by intersections may be quite different, and one would therefore expect that the correlations 
between locations on the same road segment (edge) a distance $d$ apart are higher compared to correlations between locations a distance $d$ apart on different road segments separated by intersections (vertices of degree higher than two). This is, however, not the case for models with isotropic 
covariance functions since we here have the opposite, $\varrho_{exp}(s_0,s_1)  < \varrho_{exp}(s_0,s_2) = \varrho_{exp}(s_0,s_3)$, 
due to how the resistance metric is defined.  

\begin{figure}[t]
\includegraphics[width=0.3\linewidth]{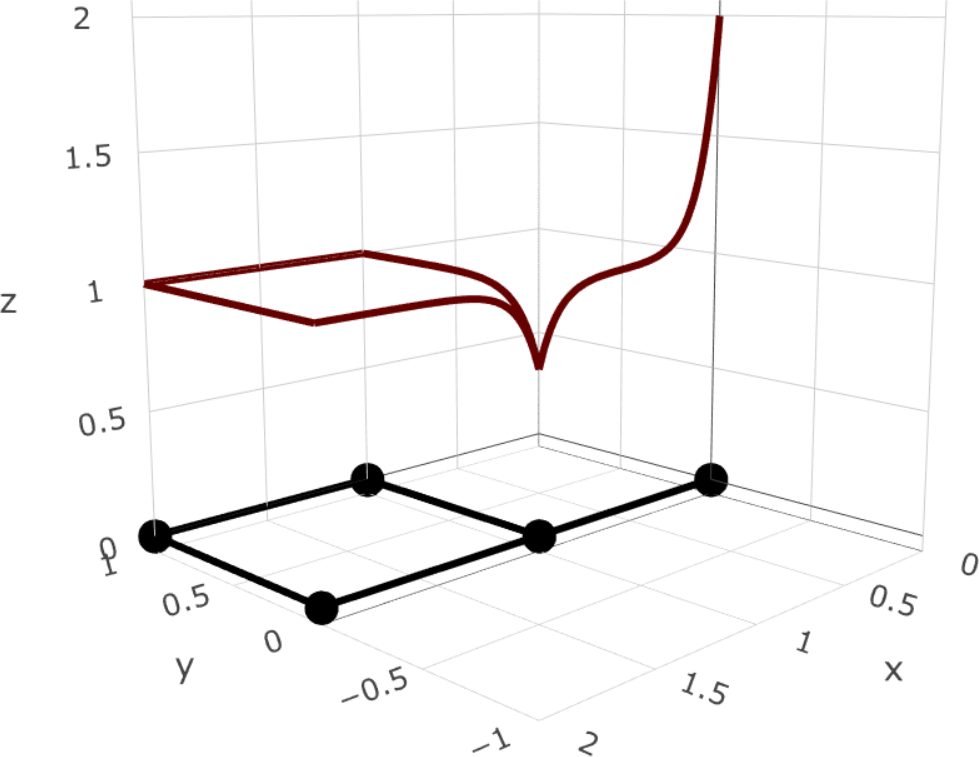}
\includegraphics[width=0.28\linewidth]{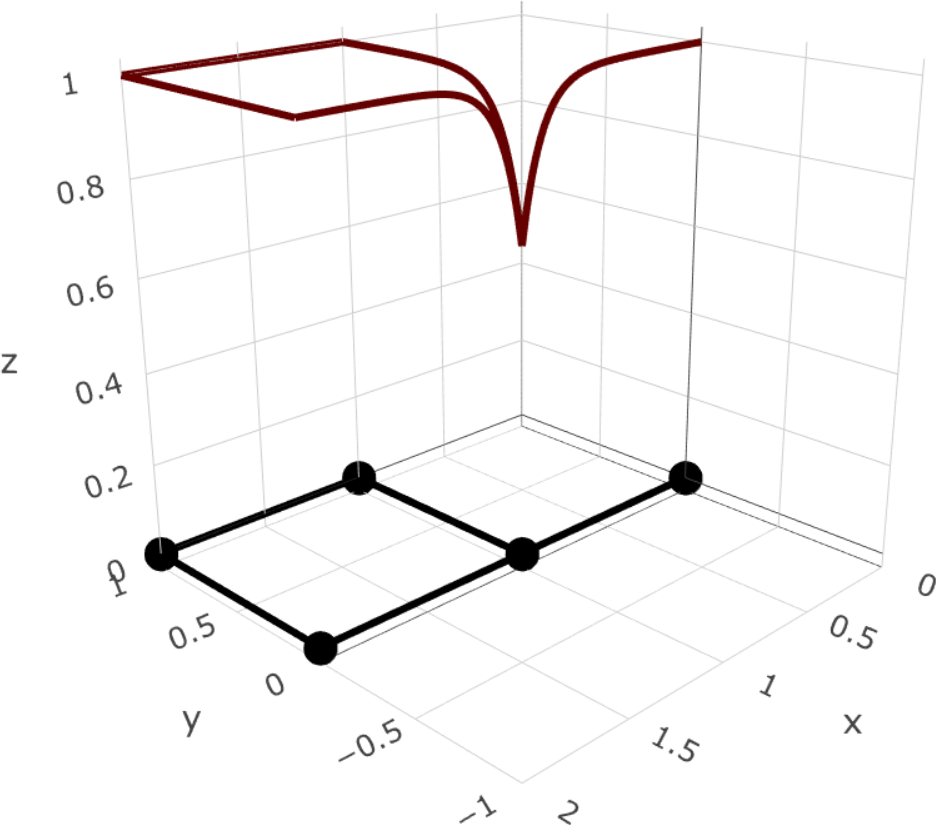}
\caption{Marginal variances of the Whittle--Mat\'ern field with $\alpha=1$ and $\kappa=5$ (left). The right plot shows the corresponding figure with adjusted boundary conditions. }
\label{fig:variances1}
\end{figure}

Thus, compared to the Euclidean setting, it is less clear that isotropy is a desirable property in the setting of metric graphs, where the local structure of the domain itself is varying. 
A consequence of the fact that the Whittle--Mat\'ern fields are not isotropic is that the marginal variances at the vertices will depend on the degrees. In particular, compared to vertices of degree two, the field will have a larger variance at vertices of degree 1 whereas it will have a smaller variance at vertices with higher degrees, see Figure~\ref{fig:variances1}.
The lower variance in vertices of degree higher than two is a natural consequence of the structure of the model. 
Thus, we might not want to remove the non-stationarity for locations close to vertices of degree greater than two. However, for applications, vertices of degree one might be real endpoints in the graph, or they might be endpoints that are induced from observing a part of the entire network. In the latter case, we might want to remove the non-stationarity for such degree-one vertices. This is easy to do by changing the vertex conditions for those vertices, and in particular we can replace the Neumann condition by a Robin boundary condition when $\alpha = 1$. As shown by \cite{Daon2018mitigating}, the Robin boundary conditions $\kappa u + u' = 0$ result in a stationary model on an interval. With this adjustment, we obtain the marginal variances shown in the bottom row of Figure~\ref{fig:variances1}.

\section{Discussion}\label{sec:discussion}
We have introduced the Gaussian Whittle-Mat\'ern random fields on metric graphs and have provided a comprehensive characterization of their regularity properties. We argue that this class of models is a natural choice for applications where Gaussian random fields are needed to model data on metric graphs. 
There are several extensions that we are currently working on for. 
One interesting problem is to characterize the statistical properties of the model, 
and in particular which of the parameters that can be estimated consistently under infill asymptotics.
Another important task is to develop computationally efficient methods for statistical inference based on the model. 
For graphs where one does not know the 
eigenvalues and eigenfunctions explicitly, one option would be to first approximate these and then use Proposition~\ref{prp:KLexp} to simulate from the field, or 
use Proposition~\ref{prp:mercercov} to evaluate the covariance function which in turn can be used to evaluate likelihoods. However, 
a more computationally efficient alternative which will be investigated in future work is to use a finite element method combined with an approximation 
of the fractional operator, similarly to the methods by \cite{BKK2020} and \cite{BK2020rational}, to derive numerical approximations of the random fields. 
This would also facilitate easy extensions to generalized Whittle--Mat\'ern fields, where one could allow the parameters $\kappa$ and $\tau$ to be non-stationary over the graph.  

Another topic that should be investigated in future work is the effect of the chosen vertex conditions. As previously mentioned, the Kirchhoff conditions have many desirable properties, but for certain applications one might have information that warrants the use of other types of vertex conditions. One such example could be that one wants to incorporate information about the angles between the incoming edges in the vertex conditions. Thus, extending the results of this work to models with other vertex conditions is an interesting topic for future work.
Finally, it would also be interesting to consider Type-G extensions of the Gaussian Whittle--Mat\'ern fields similarly to the Type-G random fields in \cite{bw20}. 
A useful property in such a construction is that the process on each edge could be represented as a subordinated Gaussian process.

\begin{appendix}
	
	\section{Real interpolation of Hilbert spaces}\label{app:interpolation}
	
	Let $H_0$ and $H_1$ be two Hilbert spaces, with corresponding inner products $(\cdot,\cdot)_{H_0}$ and $(\cdot,\cdot)_{H_1}$, respectively. Let, also, $H_1\subset H_0$ and for every $u\in H_1$, $\|u\|_{H_1}\geq \| u\|_{H_0}$, where $\|u\|_{H_j}^2 = (u,u)_{H_j}$, $j=0,1$.
	We say that a Hilbert space $H$ is an intermediate space between $H_0$ and $H_1$ if $H_1\subset H \subset H_0$, with continuous embeddings. We say that $H$ is an interpolation space relative to $(H_0,H_1)$ if $H$ is an intermediate space and for every bounded linear operator $T:H_0\to H_0$ such that $T(H_1)\subset H_1$ and the restriction $T|_{H_1}:H_1\to H_1$ is bounded, we also have that $T(H)\subset H$ and $T|_H:H\to H$ is bounded.
	
	We will consider the so-called $K$ method of real interpolation. For further discussions on such methods we refer the reader to \cite{lunardi}, \cite{mclean} or \cite{chandlerwildeetal}.
	Define the $K$-functional for  $t>0$ and $\phi\in H_0$ as
	\begin{equation}\label{eq:kfunctional}
		K(t,\phi; H_0,H_1) = \inf\{(\|\phi_0\|_{H_0}^2 + t^2\|\phi_1\|_{H_1}^2)^{1/2}: \phi_j\in H_j, j=0,1, \phi_0+\phi_1= \phi\},
	\end{equation}
	and for $0<\alpha<1$, define the weighted $L_2$ norm by
	\begin{equation}\label{eq:fracnorm}
		\|f\|_\alpha = \left(\int_0^\infty |t^{-\alpha} f(t)|^2 \frac{dt}{t}\right)^{1/2}.
	\end{equation}
	Then we have the following interpolation space for $0<\alpha<1$,
	\begin{equation}\label{eq:interpolspace}
		(H_0,H_1)_{\alpha} = \{\phi\in H_0: \|K(\cdot, \phi; H_0,H_1)\|_\alpha <\infty\},
	\end{equation}
	which is a Hilbert space with respect to the inner product
	\begin{equation}\label{eq:interpolationinnerproduct}
		(\phi,\psi)_{(H_0,H_1)_\alpha} = \int_0^\infty t^{-2\alpha}K(t,\phi;H_0,H_1)K(t,\psi;H_0,H_1) \frac{dt}{t}.
	\end{equation}

Let, now, $\widetilde{H}_0$ and $\widetilde{H}_1$ be two additional Hilbert spaces, with Hilbertian norms $\|\cdot\|_{\widetilde{H}_0}$ and $\|\cdot\|_{\widetilde{H}_1 }$, respectively, such that $\widetilde{H}_1\subset\widetilde{H}_0$ and for every $u\in \widetilde{H}_1$, we have $\|u\|_{\widetilde{H}_1} \geq \|u\|_{\widetilde{H}_0}$. We say that ${T:H_0\to\widetilde{H}_0}$ is a couple map, and we write $T:(H_0,H_1)\to(\widetilde{H}_0,\widetilde{H}_1)$ if $T$ is a bounded linear operator, $T(H_1)\subset \widetilde{H}_1$, and $T|_{H_1}:H_1\to\widetilde{H}_1$ is also a bounded linear operator.

Given Hilbert spaces $H$ and $\widetilde{H}$, we say that $(H,\widetilde{H})$ is a pair of interpolation spaces relative to $\left((H_0,H_1), (\widetilde{H}_0, \widetilde{H}_1) \right)$ if $H$ and $\widetilde{H}$ are intermediate spaces with respect to $(H_0,H_1)$ and $(\widetilde{H}_0,\widetilde{H}_1)$, respectively, and if, whenever $T:(H_0, H_1)\to (\widetilde{H}_0, \widetilde{H}_1)$ is a couple map, we have that $T(H) \subset \widetilde{H}$ and that $T|_H:H\to \widetilde{H}$ is a bounded linear operator.

The following theorem, which is proved in \cite[Theorem 2.2]{chandlerwildeetal}, tells us that if $H$ and $\widetilde{H}$ are both obtained from the $K$ method, then the pair $(H,\widetilde{H})$ is an interpolation pair:

\begin{Theorem}\label{thm:interpolationpairKmethod}
	Let $0 < \alpha < 1$. Then, $\left((H_0,H_1)_\alpha, (\widetilde{H}_0, \widetilde{H}_1)_\alpha\right)$ is a pair of interpolation spaces with respect to $\left((H_0,H_1), (\widetilde{H}_0, \widetilde{H}_1) \right)$.
\end{Theorem}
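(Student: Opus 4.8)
The plan is to verify the two defining requirements of an interpolation pair separately: first that $(H_0,H_1)_\alpha$ (and likewise $(\widetilde{H}_0,\widetilde{H}_1)_\alpha$) is an intermediate space, and second that every couple map restricts to a bounded operator between the interpolation spaces. The entire argument rests on the elementary monotonicity and scaling behaviour of the $K$-functional, so I would begin by recording two facts that follow immediately from the definition \eqref{eq:kfunctional}: the map $t\mapsto K(t,\phi;H_0,H_1)$ is nondecreasing, and one has the two-sided control $K(t,\phi;H_0,H_1)\le \min(1,t)\|\phi\|_{H_1}$ for $\phi\in H_1$ (take $\phi_0=0,\phi_1=\phi$ and $\phi_0=\phi,\phi_1=0$), together with $\|\phi\|_{H_0}\le \sqrt{2}\,K(1,\phi;H_0,H_1)$, the latter obtained from the triangle inequality and the standing assumption $\|\cdot\|_{H_0}\le\|\cdot\|_{H_1}$.

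For the intermediate space property I would feed these bounds into the norm $\|\phi\|_{(H_0,H_1)_\alpha}^2=\int_0^\infty t^{-2\alpha}K(t,\phi;H_0,H_1)^2\tfrac{dt}{t}$. The upper bound gives $\|\phi\|_{(H_0,H_1)_\alpha}^2\le \|\phi\|_{H_1}^2\big(\int_0^1 t^{1-2\alpha}\,dt+\int_1^\infty t^{-1-2\alpha}\,dt\big)$, with both integrals finite precisely because $0<\alpha<1$, proving $H_1\hookrightarrow(H_0,H_1)_\alpha$. In the other direction, restricting the integral to $t\ge 1$ and combining monotonicity with $\|\phi\|_{H_0}\le\sqrt{2}\,K(1,\phi;H_0,H_1)$ yields $\|\phi\|_{H_0}^2\le 4\alpha\,\|\phi\|_{(H_0,H_1)_\alpha}^2$, proving $(H_0,H_1)_\alpha\hookrightarrow H_0$. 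The identical estimates apply to the tilde couple.

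The heart of the proof is the behaviour under a couple map $T$. Writing $M_0=\|T\|_{H_0\to\widetilde{H}_0}$ and $M_1=\|T\|_{H_1\to\widetilde{H}_1}$, I would establish the key inequality
$$
K(t,T\phi;\widetilde{H}_0,\widetilde{H}_1)\le M_0\,K\!\left(\tfrac{M_1}{M_0}\,t,\,\phi;H_0,H_1\right),\qquad t>0,
$$
under the assumption $M_0,M_1>0$. This is immediate: any admissible decomposition $\phi=\phi_0+\phi_1$ maps to $T\phi=T\phi_0+T\phi_1$ with $T\phi_0\in\widetilde{H}_0$ and $T\phi_1\in\widetilde{H}_1$, so $(\|T\phi_0\|_{\widetilde{H}_0}^2+t^2\|T\phi_1\|_{\widetilde{H}_1}^2)^{1/2}\le M_0(\|\phi_0\|_{H_0}^2+(M_1 t/M_0)^2\|\phi_1\|_{H_1}^2)^{1/2}$, and taking the infimum over all decompositions gives the claim.

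Finally I would insert this inequality into the interpolation norm and perform the substitution $s=(M_1/M_0)t$, under which $t^{-2\alpha}\tfrac{dt}{t}$ transforms into $(M_1/M_0)^{2\alpha}s^{-2\alpha}\tfrac{ds}{s}$, obtaining $\|T\phi\|_{(\widetilde{H}_0,\widetilde{H}_1)_\alpha}\le M_0^{1-\alpha}M_1^{\alpha}\,\|\phi\|_{(H_0,H_1)_\alpha}$. This shows simultaneously that $T$ maps $(H_0,H_1)_\alpha$ into $(\widetilde{H}_0,\widetilde{H}_1)_\alpha$ and that the restriction is bounded, completing the verification that $\big((H_0,H_1)_\alpha,(\widetilde{H}_0,\widetilde{H}_1)_\alpha\big)$ is an interpolation pair. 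There is no deep obstacle here; the only points requiring care are the bookkeeping of the scaling constant $(M_1/M_0)^{2\alpha}$ in the change of variables and the degenerate cases $M_0=0$ (whence $T=0$) or $M_1=0$, which must be dispatched separately since the ratio $M_1/M_0$ is then undefined.
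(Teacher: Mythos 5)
Your proof is correct, but it is worth noting that the paper does not actually prove this statement: it is quoted verbatim from the literature, with the proof deferred to \cite[Theorem 2.2]{chandlerwildeetal}. What you have supplied is the classical self-contained argument, and every step checks out. The two preliminary facts are right: $K(t,\phi;H_0,H_1)\le\min(1,t)\|\phi\|_{H_1}$ follows from the two trivial decompositions, and $\|\phi\|_{H_0}\le\sqrt{2}\,K(1,\phi;H_0,H_1)$ from the triangle inequality together with $\|\cdot\|_{H_0}\le\|\cdot\|_{H_1}$ on $H_1$. Your convergence bookkeeping for the intermediate-space property is exactly where the hypothesis $0<\alpha<1$ enters ($\int_0^1 t^{1-2\alpha}\,dt<\infty$ needs $\alpha<1$, $\int_1^\infty t^{-1-2\alpha}\,dt<\infty$ needs $\alpha>0$), and the constant $4\alpha$ in $\|\phi\|_{H_0}^2\le 4\alpha\|\phi\|_{(H_0,H_1)_\alpha}^2$ is correct since $\int_1^\infty t^{-2\alpha-1}\,dt=(2\alpha)^{-1}$. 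The key inequality $K(t,T\phi;\widetilde{H}_0,\widetilde{H}_1)\le M_0\,K(M_1t/M_0,\phi;H_0,H_1)$ and the change of variables yielding $\|T\phi\|_{(\widetilde{H}_0,\widetilde{H}_1)_\alpha}\le M_0^{1-\alpha}M_1^{\alpha}\|\phi\|_{(H_0,H_1)_\alpha}$ are the standard exact-interpolation computation, and the norm being finite also gives the required membership $T\phi\in(\widetilde{H}_0,\widetilde{H}_1)_\alpha$. One loose end you flag but do not close: the degenerate case $M_1=0$ (which, unlike $M_0=0$, does not force $T=0$) is dispatched by noting that any decomposition $\phi=\phi_0+\phi_1$ gives $T\phi=T\phi_0+0$ with $0\in\widetilde{H}_1$, whence $K(t,T\phi;\widetilde{H}_0,\widetilde{H}_1)\le M_0\,K(t,\phi;H_0,H_1)$ directly and the bound $\|T\phi\|_{(\widetilde{H}_0,\widetilde{H}_1)_\alpha}\le M_0\|\phi\|_{(H_0,H_1)_\alpha}$ follows without any substitution. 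With that one line added, your argument is a complete and correct replacement for the external citation.
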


Finally we have the following result regarding the equivalence of interpolation spaces, recall that $H_0 \cong H_1$ if $H_0 \hookrightarrow H_1 \hookrightarrow H_0$.
	\begin{Lemma}\label{lemma:interpolationEquivalence}
		Let $H_0,H_1$ be the Hilbert spaces above and let $\bar{H}_1$ be a Hilbert space satisfying $ {\bar{H}_1 \subset H_0}$ and $H_{1} \cong \bar{H}_1$. 
		Then for $0<\alpha<1$ we have that  
		$(H_0,H_1)_{\alpha} 	\cong (H_0,\bar{H}_1)_{\alpha}$.
	\end{Lemma}
	\begin{proof}
		Note that there exist $C_1, C_2>0$ such that for every $u\in H_1$, 
		$C^{-1}_1 \|u\|_{\bar{H}_1} \leq \|u\|_{H_1} \leq C_2 \|u\|_{\bar{H}_1}$.
		Define $\widetilde{C}_1= 1+C_1$ and $\widetilde{C}_2 = 1+C_2$, and observe that
		$K(t,\phi;H_0,  \bar{H}_1)  	\leq  \widetilde{C}_1 K(t,\phi; H_0,H_1)$
		and
		$K(t,\phi; H_0, H_1)  	\leq \widetilde{C}_2 K(t,\phi; H_0, \bar{H}_1).$
		Thus, by combining the above inequalities with \eqref{eq:interpolationinnerproduct},
		we obtain that for any $\phi \in H_0 $,
		$
		\widetilde{C}^{-1}_1  \|u\|_{(H_0,\bar{H}_1)_{\alpha}} \leq \|u\|_{(H_0,H_1)_{\alpha}} \leq \widetilde{C}_2 \|u\|_{(H_0,\bar{H}_1)_{\alpha}}$
	\end{proof}

\section{The fractional Sobolev space $H^\alpha(I)$}\label{app:sob}

In this section we will provide different definitions for the Sobolev space of arbitrary positive order on a bounded interval $I=(a,b)$, and show how they are connected. All the proofs and details of the results in here can be found in \cite{mclean} and \cite{demengel}. The final result on interpolation of Fourier-based fractional Sobolev spaces can be found in \cite{chandlerwildeetal}.

We begin with the definition of the Sobolev spaces of positive integer orders. Let $C_c^\infty(I)$ be the space of infinitely differentiable functions on $I$ with 
compact support.We say that a function $f\in L_2(I)$ has a weak derivative of order $m$ in $L_2(I)$ if there exists a function $v\in L_2(I)$ such that for every $\phi \in C_c^\infty(I)$,
$$\int_I u(t) \frac{d^m \phi(t)}{dt^m} dt = (-1)^m \int_I v(t)\phi(t)dt.$$
In this case we say that $v$ is the $m$th weak derivative of $u$ and we write $v = D^m u$. 

The Sobolev space $H^m(I)$ is defined as
\begin{equation}\label{eq:sobinteger}
	H^m(I) = \{u\in L_2(I): D^j u \hbox{ exists in $L_2(I)$ for every } j=1,\ldots, m\},
\end{equation}
and we have the following characterization of $H^1(I)$:
\begin{equation}\label{eq:sobabscont}
	H^1(I) = \{u\in L_2(I): u \hbox{ is absolutely continuous and $u'\in L_2(I)$}\}.
\end{equation}
Further, in \eqref{eq:sobabscont}, we have that $u' = Du$ a.e., where $u'$ is the a.e. derivative of $u$. 

We will now define the fractional Sobolev-Slobodeckij space of order $0<\alpha<1$. To this end, first we consider the Gagliardo-Slobodeckij semi-norm and the corresponding bilinear form
$$
[u]_{\alpha}^2 = \int_I \int_I \frac{|u(t) - u(s)|^2}{|t-s|^{2\alpha+1}} dtds, \quad 
[u,v]_\alpha = \int_I \int_I \frac{(u(t) - u(s))(v(t)-v(s))}{|t-s|^{2\alpha+1}} dtds.
$$
The Sobolev-Slobodeckij space 
$H^\alpha_S(I) = \{u\in L_2(I): [u]_\alpha <\infty\}$, for ${0<\alpha<1}$, is then
a Hilbert space with respect to the inner product $(u,v)_{H_S^\alpha(I)} = (u,v)_{L_2(I)} + [u,v]_\alpha$.
We have the following Sobolev embedding \cite[Theorem 4.57]{demengel}:

\begin{Theorem}\label{thm:sobolevembeddingR}
	For $1/2 < \alpha \leq 1$, we have that
	$H_S^\alpha(I) \hookrightarrow C^{0,\alpha-1/2}(I)$, where $H_S^1(I):= H^1(I)$.
\end{Theorem}

We also have Fourier-based fractional Sobolev spaces. To define these, let 
$$
\mathcal{S}(\mathbb{R}) = \{u\in C^\infty(\mathbb{R}): \forall a,b\in\mathbb{N}, \sup_{x\in\mathbb{R}} |x^a D^bu(x)| <\infty \}
$$
be the Schwartz space of rapidly decreasing functions, where $C^\infty(\mathbb{R})$ is the space of infinitely differentiable functions in $\mathbb{R}$. We let $\mathcal{S}'(\mathbb{R})$ be the dual of $\mathcal{S}(\mathbb{R})$ and we embed $L_2(\mathbb{R})$ in $\mathcal{S}'(\mathbb{R})$ by the action $\<u,v\> = (u,v)_{L_2(\mathbb{R})},$ where $u\in L_2(\mathbb{R})$ and $v\in \mathcal{S}(\mathbb{R})$. Further, let, for $u\in\mathcal{S}(\mathbb{R})$, $\mathcal{F}(u)$ be the Fourier transform of $u$,
$$\mathcal{F}(u)(x) = \frac{1}{\sqrt{2\pi}} \int_{\mathbb{R}} e^{-ixt} u(t)dt.$$
We can then extend $\mathcal{F}$ to a map from $\mathcal{S}'(\mathbb{R})$ to $\mathcal{S}'(\mathbb{R})$ \cite[see, e.g.,][]{mclean}, and for $\alpha\in\mathbb{R}$, we define  ${H^\alpha_F(\mathbb{R})\subset \mathcal{S}'(\mathbb{R})}$ as
$H^\alpha_F(\mathbb{R}) = \{ u\in \mathcal{S}'(\mathbb{R}): \|u\|_{H^\alpha_F(\mathbb{R})} <\infty\},$
where
$$ \|u\|_{H^\alpha_F(\mathbb{R})} = \left(\int_{\mathbb{R}} (1+|x|^2)^\alpha |\mathcal{F}(u)(x)|^2 dx\right)^{1/2}.$$
Note that the above norm assumes two things: that $\mathcal{F}(u)$ can be identified with a measurable function, and that $(1+|x|^2)^{\alpha/2} |\mathcal{F}(u)(x)|$ belongs to $L_2(\mathbb{R})$.

We can now define the Fourier-based fractional Sobolev spaces on a bounded interval $I$. To this end, let $\mathcal{D}(I)=C_c^\infty(I)$, and $\mathcal{D}'(I)$ be its dual.  Then, we define the Fourier-based Sobolev space of order $\alpha$ as
$H_F^\alpha(I) = \{u\in \mathcal{D}'(I): u = U|_I \hbox{ for some } U \in H_F^s(\mathbb{R})\},$
where $U|_I$ is the restriction of the distribution $U$ to the set $I$ \citep[p. 66]{mclean}. 
The space $H_F^\alpha(I)$ is a Hilbert space with respect to the norm
$\|u\|_{H_F^\alpha(I)} = \inf\{\|U\|_{H_F^\alpha(\mathbb{R})}: U|_I = u\}.$

Recall that two Hilbert spaces $E$ and $F$ are isomorphic, which we denote by $E \cong F$, if $E \hookrightarrow F \hookrightarrow E$. The two definitions of fractional Sobolev spaces are in fact equivalent in this sense \cite[Theorems 3.18 and A.4]{mclean}:

\begin{Theorem}\label{thm:equivfracsob}
	For any bounded interval $I\subset \mathbb{R}$ and any $0\leq \alpha\leq 1$, we have that 
	$H_S^\alpha(I) \cong H_F^\alpha(I),$
	where $H_S^1(I) = H^1(I)$ and $H_S^0(I) = L_2(I)$.
\end{Theorem}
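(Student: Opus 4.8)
The plan is to prove the two continuous embeddings $H_S^\alpha(I)\hookrightarrow H_F^\alpha(I)$ and $H_F^\alpha(I)\hookrightarrow H_S^\alpha(I)$, which together give the claimed isomorphism. The endpoint cases are immediate: for $\alpha=0$ both spaces equal $L_2(I)$ (via Plancherel and restriction on the Fourier side), while for $\alpha=1$ the identity $\mathcal{F}(u')(x)=ix\mathcal{F}(u)(x)$ gives $H_F^1(\mathbb{R})\cong H^1(\mathbb{R})$, after which the interval case follows from the restriction/extension argument below. The substantive case is $0<\alpha<1$, and the overall strategy is to pass through the real line, where the two norms can be compared directly via the Fourier transform, and then transfer back to $I$ using a bounded extension operator together with the trivial boundedness of restriction.

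First I would establish the whole-line equivalence $H_S^\alpha(\mathbb{R})\cong H_F^\alpha(\mathbb{R})$. Writing the Gagliardo seminorm with the substitution $h=t-s$ and applying Plancherel to $\|u(\cdot)-u(\cdot-h)\|_{L_2(\mathbb{R})}^2=\int_{\mathbb{R}}|1-e^{-ihx}|^2|\mathcal{F}(u)(x)|^2\,dx$ yields
\begin{equation*}
[u]_\alpha^2=\int_{\mathbb{R}}|\mathcal{F}(u)(x)|^2\left(\int_{\mathbb{R}}\frac{|1-e^{-ihx}|^2}{|h|^{2\alpha+1}}\,dh\right)dx=c_\alpha\int_{\mathbb{R}}|x|^{2\alpha}|\mathcal{F}(u)(x)|^2\,dx,
\end{equation*}
where the rescaling $\eta=hx$ shows the inner integral equals $c_\alpha|x|^{2\alpha}$ with $c_\alpha=\int_{\mathbb{R}}|1-e^{-i\eta}|^2|\eta|^{-2\alpha-1}\,d\eta$, finite precisely because $0<\alpha<1$ (the integrand is $\sim|\eta|^{1-2\alpha}$ near $0$ and $\sim|\eta|^{-1-2\alpha}$ at infinity). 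Hence $\|u\|_{L_2(\mathbb{R})}^2+[u]_\alpha^2=\int_{\mathbb{R}}(1+c_\alpha|x|^{2\alpha})|\mathcal{F}(u)(x)|^2\,dx$, and since $1+c_\alpha|x|^{2\alpha}$ is comparable to $(1+|x|^2)^\alpha$ up to multiplicative constants, this equals, up to constants, $\|u\|_{H_F^\alpha(\mathbb{R})}^2$.

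Next I would transfer to the interval. The embedding $H_F^\alpha(I)\hookrightarrow H_S^\alpha(I)$ is the easy direction: given $u\in H_F^\alpha(I)$, pick $U\in H_F^\alpha(\mathbb{R})$ with $U|_I=u$ and $\|U\|_{H_F^\alpha(\mathbb{R})}\leq 2\|u\|_{H_F^\alpha(I)}$; by the whole-line equivalence $U\in H_S^\alpha(\mathbb{R})$ with controlled norm, and since the Gagliardo double integral over $I\times I$ is dominated by the one over $\mathbb{R}\times\mathbb{R}$ (the integrand is nonnegative and $U|_I=u$), we obtain $\|u\|_{H_S^\alpha(I)}\leq C\|u\|_{H_F^\alpha(I)}$. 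For the reverse embedding $H_S^\alpha(I)\hookrightarrow H_F^\alpha(I)$ one needs a bounded extension operator $E:H_S^\alpha(I)\to H_S^\alpha(\mathbb{R})$; given such an operator, the whole-line equivalence and the definition of the $H_F^\alpha(I)$ norm as an infimum over extensions give $\|u\|_{H_F^\alpha(I)}\leq\|Eu\|_{H_F^\alpha(\mathbb{R})}\leq C\|Eu\|_{H_S^\alpha(\mathbb{R})}\leq C'\|u\|_{H_S^\alpha(I)}$.

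The main obstacle is the construction of this bounded extension operator $E$. Since $I=(a,b)$ is a smooth (in particular Lipschitz) domain, I would build $E$ by reflecting $u$ across each endpoint and multiplying by a smooth, compactly supported cutoff equal to $1$ on $I$. Even reflection $x\mapsto 2a-x$ (and analogously at $b$) preserves $L_2$ norms and, by a change of variables in the Gagliardo double integral, controls the fractional seminorm of the extension by that of $u$ for all $0\leq\alpha\leq 1$; the cutoff multiplication is bounded on $H_S^\alpha$ in the same range via a Leibniz-type estimate for the fractional seminorm. Checking that these operations are simultaneously bounded, uniformly over the relevant order, is the crux of the argument and is exactly the content of the results cited from \cite{mclean}; once $E$ is available, both embeddings follow and the isomorphism is established.
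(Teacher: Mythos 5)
Your proposal is correct, but note that the paper does not actually prove this statement: Theorem~\ref{thm:equivfracsob} is quoted verbatim from the literature, with the entire ``proof'' being the citation to Theorems 3.18 and A.4 of \cite{mclean}. What you have written is essentially a self-contained reconstruction of the standard argument behind those citations: the Plancherel/Fubini computation showing $[u]_\alpha^2 = c_\alpha \int_{\mathbb{R}} |x|^{2\alpha}|\mathcal{F}(u)(x)|^2\,dx$ on the whole line (with the correct endpoint analysis showing $c_\alpha<\infty$ exactly for $0<\alpha<1$), the trivial direction $H_F^\alpha(I)\hookrightarrow H_S^\alpha(I)$ via a near-optimal extension and monotonicity of the Gagliardo double integral, and the substantive direction via a bounded extension operator. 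One simplification your route buys that the cited reference does not exploit: McLean works on Lipschitz domains in $\mathbb{R}^n$ and therefore needs the Calder\'on--Stein extension machinery, whereas in dimension one an interval admits the elementary reflection-plus-cutoff extension you describe, and your Leibniz-type bound for the cutoff (splitting $|\phi(t)u(t)-\phi(s)u(s)|$ and using $|\phi(t)-\phi(s)|\leq\min(2\|\phi\|_\infty, L|t-s|)$ to make the second double integral finite for $0<\alpha<1$) is a complete and correct justification. Two minor points: your parenthetical wish for boundedness ``uniformly over the relevant order'' is unnecessary (the theorem is for fixed $\alpha$, and indeed $c_\alpha$ blows up as $\alpha\to 1^-$, so uniform constants are not available anyway); and in the easy direction you should record, as you implicitly do, that an element of $H_F^\alpha(I)$ is represented by an $L_2(I)$ function because any extension $U\in H_F^\alpha(\mathbb{R})$ with $\alpha\geq 0$ lies in $L_2(\mathbb{R})$, so the Gagliardo seminorm of the restriction is well defined.
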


The advantage of the Fourier-based definition of fractional Sobolev spaces is that it is  suitable for interpolation. Indeed, we have by \cite[Lemma 4.2 and Theorem 4.6]{chandlerwildeetal}:

\begin{Theorem}\label{thm:interpolFourierFrac}
	Let $I\subset \mathbb{R}$ be a bounded interval, $\alpha_0\leq \alpha_1$, and $\theta\in (0,1)$. Set $\alpha = \alpha_0 (1-\theta) + \alpha_1\theta$. Then,
	$H_F^\alpha(I) \cong (H_F^{\alpha_0}(I), H_F^{\alpha_1}(I))_{\theta}.$
\end{Theorem}

Finally, for $0<\alpha<1$ we have the following identification of the interpolation-based Sobolev space 
$H^\alpha(I) = (L_2(I), H^1(I))_\alpha.$

\begin{Proposition}\label{prp:identificationFracSobint}
	For $0<\alpha < 1$ and any bounded interval $I$, we have
	$H^\alpha(I) \cong H_S^\alpha(I).$
\end{Proposition}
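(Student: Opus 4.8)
The plan is to identify $H^\alpha(I) = (L_2(I), H^1(I))_\alpha$ with the Sobolev--Slobodeckij space by routing through the Fourier-based space $H_F^\alpha(I)$, for which an interpolation identity is already available. The key observation is that the isomorphisms in Theorem~\ref{thm:equivfracsob} can be read as statements about the endpoints of the interpolation couple: since $H_S^0(I) = L_2(I)$ and $H_S^1(I) = H^1(I)$, Theorem~\ref{thm:equivfracsob} gives $L_2(I) \cong H_F^0(I)$ and $H^1(I) \cong H_F^1(I)$.

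First I would establish that real interpolation is insensitive to replacing the endpoints by isomorphic spaces. Concretely, the identity map $\mathrm{id}$ is bounded from $L_2(I)$ into $H_F^0(I)$ and from $H^1(I)$ into $H_F^1(I)$ by Theorem~\ref{thm:equivfracsob}, hence is a couple map $(L_2(I), H^1(I)) \to (H_F^0(I), H_F^1(I))$. Theorem~\ref{thm:interpolationpairKmethod} then guarantees that the pair $\left((L_2(I), H^1(I))_\alpha,\, (H_F^0(I), H_F^1(I))_\alpha\right)$ is a pair of interpolation spaces, so $\mathrm{id}$ maps $(L_2(I), H^1(I))_\alpha$ boundedly into $(H_F^0(I), H_F^1(I))_\alpha$. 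Running the identical argument with the reverse inclusions (again bounded by Theorem~\ref{thm:equivfracsob}) yields the opposite bound, and therefore $(L_2(I), H^1(I))_\alpha \cong (H_F^0(I), H_F^1(I))_\alpha$.

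Next I would apply Theorem~\ref{thm:interpolFourierFrac} with $\alpha_0 = 0$, $\alpha_1 = 1$ and $\theta = \alpha$, so that $\alpha = 0\cdot(1-\alpha) + 1\cdot\alpha$, giving $(H_F^0(I), H_F^1(I))_\alpha \cong H_F^\alpha(I)$. Combining this with the previous step and with the equivalence $H_F^\alpha(I) \cong H_S^\alpha(I)$ from Theorem~\ref{thm:equivfracsob} produces the chain
\[
H^\alpha(I) = (L_2(I), H^1(I))_\alpha \cong (H_F^0(I), H_F^1(I))_\alpha \cong H_F^\alpha(I) \cong H_S^\alpha(I),
\]
which is the desired conclusion.

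I expect the main obstacle to be the first step, namely verifying that the interpolation functor respects isomorphisms of the endpoint spaces; this is precisely the content for which Theorem~\ref{thm:interpolationpairKmethod} was recorded, so once it is invoked with the identity as a couple map the argument reduces to bookkeeping. The remaining steps are direct citations of Theorems~\ref{thm:interpolFourierFrac} and~\ref{thm:equivfracsob} with the specific choice of exponents, and require no additional estimates.
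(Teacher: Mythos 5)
Your proposal is correct and takes essentially the same route as the paper's proof: both establish the chain $H^\alpha(I) = (L_2(I),H^1(I))_\alpha \cong (H_F^0(I),H_F^1(I))_\alpha \cong H_F^\alpha(I) \cong H_S^\alpha(I)$ using the endpoint equivalences of Theorem~\ref{thm:equivfracsob} and then Theorem~\ref{thm:interpolFourierFrac}. The only cosmetic difference is in the middle step, where you invoke Theorem~\ref{thm:interpolationpairKmethod} with the identity as a couple map in both directions, while the paper instead compares the $K$-functionals of the two couples directly (citing the argument of Corollary~\ref{cor:identificationHalpha}); the two justifications are interchangeable.
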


\begin{proof}
	By Theorem \ref{thm:equivfracsob}, we have that $H_F^0(I) \cong L_2(I)$ and $H_F^1(I)\cong H^1(I)$. Thus, it follows from the same arguments as in the proof of 
	Corollary \ref{cor:identificationHalpha} 
	that
	$$H^\alpha(I) = (L_2(I), H^1(I))_\alpha \cong (H_F^0(I), H_F^1(I))_\alpha.$$
	Now, by this equality and Theorems \ref{thm:equivfracsob}--\ref{thm:interpolFourierFrac}, we have that
	$H^\alpha(I) \cong H_F^\alpha(I) \cong H_S^\alpha(I).$
\end{proof}

As an immediate consequence from Proposition \ref{prp:identificationFracSobint} and Theorem \ref{thm:sobolevembeddingR}
we obtain a Sobolev embedding for the interpolation-based fractional Sobolev space:

\begin{Corollary}\label{cor:SobembedInterpFracSob}
	For $1/2 < \alpha \leq 1$, we have that
	$H^\alpha(I) \hookrightarrow C^{0,\alpha-1/2}(I).$
\end{Corollary}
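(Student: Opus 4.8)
The plan is to deduce the embedding by composing the space identification in Proposition~\ref{prp:identificationFracSobint} with the classical Sobolev embedding for Sobolev--Slobodeckij spaces recorded in Theorem~\ref{thm:sobolevembeddingR}. The key elementary observation is that continuous embeddings compose: if $E \hookrightarrow F$ and $F \hookrightarrow G$ with embedding constants $C_1$ and $C_2$, then $\|f\|_G \leq C_2 C_1 \|f\|_E$ for every $f\in E$, so $E \hookrightarrow G$. Thus the whole argument reduces to chaining two already-established embeddings.

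First, for $1/2 < \alpha < 1$, I would invoke Proposition~\ref{prp:identificationFracSobint}, which gives $H^\alpha(I) \cong H_S^\alpha(I)$. By the definition of the isomorphism $\cong$ recalled earlier in the appendix, this in particular yields the continuous embedding $H^\alpha(I) \hookrightarrow H_S^\alpha(I)$; note that only this single direction is needed. Combining it with the embedding $H_S^\alpha(I) \hookrightarrow C^{0,\alpha-1/2}(I)$ from Theorem~\ref{thm:sobolevembeddingR} and using transitivity then gives $H^\alpha(I) \hookrightarrow C^{0,\alpha-1/2}(I)$.

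It remains to treat the endpoint $\alpha = 1$, which lies outside the range $0<\alpha<1$ of Proposition~\ref{prp:identificationFracSobint}. Here no interpolation is needed: by the convention adopted in Theorem~\ref{thm:sobolevembeddingR} one sets $H_S^1(I) := H^1(I)$, and $H^1(I)$ is precisely the interpolation-based space at $\alpha = 1$, so the embedding $H^1(I) \hookrightarrow C^{0,1/2}(I)$ is exactly the statement of that theorem. Since the result is an immediate consequence of the two cited facts, I do not expect any genuine obstacle; the only points requiring a little care are that one should invoke only the direction $H^\alpha(I)\hookrightarrow H_S^\alpha(I)$ of the isomorphism, correctly track that the embedding constants multiply under composition, and argue the endpoint $\alpha=1$ separately rather than attempting to reach it by interpolation.
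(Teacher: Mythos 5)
Your proof is correct and follows exactly the paper's own argument, which likewise deduces the corollary by composing Proposition~\ref{prp:identificationFracSobint} with Theorem~\ref{thm:sobolevembeddingR}; your explicit treatment of the endpoint $\alpha=1$ via the convention $H_S^1(I):=H^1(I)$ is a careful elaboration of a detail the paper leaves implicit, not a different route.
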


\end{appendix}

\bibliographystyle{imsart-number}
\bibliography{../../Bib/unified_graph_bib}

\end{document}